\documentclass[11pt,a4paper,fleqn]{article}

\usepackage{amsmath,amssymb,amsthm,enumerate}%,backref,cite

\setlength{\textwidth}{160.0mm}
\setlength{\textheight}{240.0mm}
\setlength{\oddsidemargin}{0mm}
\setlength{\evensidemargin}{0mm}
\setlength{\topmargin}{-19mm}
\setlength{\parindent}{0mm}

\tolerance=9999

\newcommand{\ev}{\mathop{\mathbf{ev}}\nolimits}
\newcommand{\ann}{\mathop{\rm ann}\nolimits}
\newcommand{\rank}{\mathop{\rm rank}\nolimits}
\newcommand{\im}{\mathop{\rm im}\nolimits}
\newcommand{\Char}{\mathop{\rm char}\nolimits}
\newcommand{\ad}{\mathop{\rm ad}\nolimits}

\newcommand{\F}{\mathbb{F}}
\newcommand{\LnF}{\mathop{\mathcal L_n(\F)}}
\newcommand{\AnF}{\mathop{\mathcal A_n(\F)}}
\newcommand{\SknF}{\mathop{\mathcal{K}_n(\F)}}
\newcommand{\BnF}{\mathop{\mathcal{B}_n(\F)}}
\newcommand{\UnF}{\mathop{\mathcal{U}_n(\F)}}

\newcommand{\GlnF}{\mathop{{\rm GL}(n,\F)}\nolimits}
\newcommand{\Fspan}{\mathop{\F\mbox{-\rm sp}}\nolimits}

\newcommand{\LnFa}{\mathop{\mathcal L_n^a(\F)}}
\newcommand{\SknFa}{\mathop{\mathcal{K}_n^a(\F)}}
\newcommand{\BnFa}{\mathop{\mathcal{B}_n^a(\F)}}

\newcommand{\qadeg}{\mathop{\hat q\_\rm{aux\_deg}}\nolimits}

\newcounter{mcasenum}

\newtheorem{theorem}{Theorem}[section]
\newtheorem{lemma}[theorem]{Lemma}
\newtheorem*{lemma*}{Lemma}
\newtheorem{corollary}[theorem]{Corollary}
\newtheorem*{corollary*}{Corollary}

{\theoremstyle{definition} \newtheorem{definition}[theorem]{Definition}
\newtheorem*{definition*}{Definition}
\newtheorem{example}[theorem]{Example}
\newtheorem*{examples*}{Exampes}
\newtheorem{remark}[theorem]{Remark}
\newtheorem*{remark*}{Remark}

\newtheorem{result}[theorem]{Result}
\newtheorem*{result*}{Result}
\newtheorem*{comment*}{Comment}

\begin{document}

\title{On degenerations of algebras over an arbitrary field}

\author{N.M. Ivanova$^a$ and C.A. Pallikaros$^{b}$}

\date{February 13, 2017}
\maketitle

{\vspace{1mm}\par\noindent\footnotesize\it{
${}^{a}$~Institute of Mathematics of NAS of Ukraine,~3 Tereshchenkivska Str., 01601 Kyiv, Ukraine\\
\phantom{${}^{a}$~}{}E-mail: ivanova.nataliya@gmail.com\\
${}^{b}$~Department of Mathematics and Statistics, University of Cyprus, PO Box 20537,
1678 Nicosia, Cyprus\\
\phantom{${}^{b}$~}{}E-mail: pallikar@ucy.ac.cy\\

}}

\begin{abstract}
%\noindent
For each $n\ge2$ we classify all  $n$-dimensional algebras over an arbitrary infinite field which have the property that the $n$-dimensional abelian Lie algebra is their only proper degeneration.

\medskip\noindent
%{\bf  Mathematics Subject Classification (2010):} 14D06; 14L30; 14R20; 17A30
%
%14D06 Fibrations, degenerations; 14L30 Group actions on varieties or schemes (quotients); 17A30 Algebras satisfying other identities; 14R20 Group actions on affine varieties
%
{\bf Keywords:} degeneration; orbit closure; algebra; skew-symmetric algebra
\end{abstract}

%\vspace{3cm}

\section{Introduction}

The concept of degeneration probably first arises in the second half of the twentieth century when a lot of attention was paid to the study of various limit processes linking physical theories.
One of the most famous examples of such a limit process is the relation between classical and quantum mechanics.
Indeed, classical mechanics can be studied as a limit case of quantum mechanics where the quantum mechanical commutator $[x,p]=i\hslash$ (corresponding to the Heisenberg uncertainty principle) maps to the abelian case (that is the classical mechanics limit) as $\hslash\to 0$.
%(This corresponds precisely to the degeneration considered in Lemma~\ref{LemmaHeisenbergIsLevel1}).
The pioneering works in this direction were a paper by Segal~\cite{Segal1951}
who considered the non-isomorphic limit of sequences of structure constants of isomorphic Lie groups, and a series of papers of In\"on\"u and Wigner~\cite{InonuWigner1953,InonuWigner1954} devoted to the limit process $c\to\infty$ in special relativity theory showing how the symmetry group of relativistic mechanics (the Poincar\'e group) degenerates to the symmetry group of classical mechanics (Galilean group).
The target algebras of such limit processes (which are nothing else but the points in the closure in metric topology of the orbit of the initial algebra under the `change of basis' action of the general linear group) are called contractions (or degenerations in the more general context of an arbitrary field and Zariski closure).
These (and many other) examples of degenerations have a wide range of applications in physics based on the fact that
if two physical theories are related by a limiting process, then the associated invariance groups (and invariance algebras) should also be related by some limiting process.
Degenerations of algebras also have applications in other branches of mathematics. Thus, for example, they can be used as a tool for finding rigid algebras which are important for the investigation of varieties of algebras and their irreducible components.
Degenerations are widely used for studying different properties of flat and curved spaces and in the theory of quantum groups.
%Below we will show an example of application of degenerations of Lie algebras in representation theory (Subsection~\ref{SubsectionCompositionSeries}).
As the notion of degeneration is closely related to the notion of deformation~\cite{Gerstenhaber1964}, degenerations are often used for investigating deformations of Lie algebras~\cite{Levy-Nahas1967,FialowskyOHalloran1990}.

Despite their theoretical and practical interest, results about degenerations, especially in fields other then~$\mathbb C$ or $\mathbb R$, are still fragmentary.
In~\cite{GrunewaldOHalloran1988a}, it is shown that the closures in the Zariski topology and in the standard topology of the orbit of a point of an affine variety over $\mathbb C$ under the action of an algebraic group coincide.
In the same work, again over $\mathbb C$, for $G$ a reductive algebraic group with Borel subgroup $B$ and $X$ an algebraic set on which $G$ acts, it is shown that for $x\in X$ the closure of the $B$-orbit of $x$ has non-empty intersection with every orbit in the closure of its $G$-orbit.
Some necessary conditions for the existence of degenerations of real and complex algebras were constructed  in~\cite{GrunewaldOHalloran1988a,NesterenkoPopovych2006,Rakhimov2005,Seeley1990,Seeley1991}.
A criterion for a Lie algebra to be a degeneration of another Lie algebra over an algebraically closed field is given in~\cite{GrunewaldOHalloran1988b}.
In~\cite{Popov2009} the question of whether or not a given orbit (of a point under the action of an algebraic group) lies in the Zariski closure of another orbit is being considered and a method of solving this problem is presented.
However, in practice it is extremely difficult to apply the results of~\cite{GrunewaldOHalloran1988b,Popov2009}.
For some classes of algebras (like real and complex 3- and 4-dimensional Lie algebras, low-dimensional nilpotent Lie algebras, some subclasses of Malcev algebras) the problem of determining all degenerations within the given class has been considered, see for example~\cite{Burde&Steinhoff1999,NesterenkoPopovych2006,Kaygorodov&Popov&Volkov2016,Kaygorodov&Volkov2017} and references therein.

Our motivation comes from works of Gorbatsevich~\cite{Gorbatsevich1991,Gorbatsevich1993,Gorbatsevich1998} and in particular the notion of the level of complexity of a finite dimensional algebra.

At this point we introduce some notation.
Let $V$ be an $n$-dimensional vector space over an arbitrary field $\F$ and let $G=\GlnF$.
Also denote by $\AnF$ the set of all algebra structures on $V$ and by $\SknFa$ the subset of $\AnF$ consisting of the algebra structures satisfying the identity $[x,x]=0$.
(We denote by $[x_1,x_2]$ the product of the elements $x_1,$ $x_2$ of an algebra.)
Next we introduce some particular algebras in $\AnF$.
Let $\mathfrak{a}_n$ be the $n$-dimensional abelian Lie algebra.
Also let $\mathfrak{h}_n\in\AnF$ be isomorphic to the Lie algebra direct sum $\mathfrak{h}_3\oplus\mathfrak{a}_{n-3}$ (where $\mathfrak{h}_3$ is the Heisenberg algebra) and $\mathfrak{r}_n\in\AnF$ be defined as the algebra structure for which the only nonzero products between the basis elements of a fixed basis $(v_1^*,\ldots,v_n^*)$ of $V$ are $[v_i^*,v_n^*]=v_i^*=-[v_n^*,v_i^*]$ for $1\le i\le n-1$.
In particular $\mathfrak{r}_n$, $\mathfrak{h}_n$ are both Lie algebras and hence they both belong to $\SknFa$.
Note that $\mathfrak{r}_n$ is the only algebra structure in $\SknFa$ other than $\mathfrak{a}_n$ which satisfies the condition that the product of any two of its elements is a linear combination of the same two elements.

We consider the natural `change of basis' action of $G$ on $\F^{n^3}$, regarded as a space of structure vectors.
(We say that $\boldsymbol\lambda\in\F^{n^3}$ is a structure vector for $\mathfrak{g}\in\AnF$ if there exists a basis of $V$ relative to which $\boldsymbol\lambda$ is the vector of structure constants for $\mathfrak{g}$.)
The algebra structure $\mathfrak{g}$ degenerates to $\mathfrak{g}_1$ ($\mathfrak{g}$, $\mathfrak{g}_1\in\AnF$) if there exist $\boldsymbol\lambda$, $\boldsymbol\mu$ structure vectors for $\mathfrak{g}$, $\mathfrak{g}_1$ respectively ($\boldsymbol\lambda$, $\boldsymbol\mu\in\F^{n^3}$) such that $\boldsymbol\mu$ belongs to the Zariski-closure of the orbit of $\boldsymbol\lambda$ resulting from the above action.

One can easily observe that if $\F$ is infinite then every $\mathfrak{g}\in\AnF$ degenerates to $\mathfrak{a}_n$.
Our first result deals with the problem of determining the isomorphism classes of algebras in $\SknFa$ that have $\mathfrak{a}_n$ as their only proper degeneration in the case $\F$ is an arbitrary infinite field.
Note that the special case $\F=\mathbb C$ is already settled in~\cite[Theorem~1]{Gorbatsevich1991}.
See also~\cite[Theorem~5.2]{Lauret2003} for the case $\F=\mathbb R$.
%These are the closed orbits in projective space topology.

\begin{theorem}\label{TheoremMain}
Let $n\ge3$ and let $\F$ be an arbitrary infinite field.
Then, among all $n$-dimensional algebras satisfying the identity $[x,x]=0$, algebras $\mathfrak{r}_n$ and $\mathfrak{h}_n$ are the only ones (up to isomorphism) which have the $n$-dimensional abelian Lie algebra $\mathfrak{a}_n$ as their only proper degeneration.
\end{theorem}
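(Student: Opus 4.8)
The plan is to recast the hypothesis geometrically. Replacing a structure vector $\boldsymbol\lambda$ by $c\boldsymbol\lambda$ (with $c\in\F$, $c\neq0$) is effected by a scalar matrix in $G$, so every orbit $\orb(\boldsymbol\lambda)$ is scaling-stable, i.e. a punctured cone; letting $c\to0$ gives $0\in\overline{\orb(\boldsymbol\lambda)}$, which recovers that each $\mathfrak g$ degenerates to $\mathfrak a_n$. Hence the statement ``$\mathfrak a_n$ is the only proper degeneration of $\mathfrak g$'' is equivalent to $\overline{\orb(\boldsymbol\lambda)}=\orb(\boldsymbol\lambda)\cup\{0\}$, and also to the image of $\orb(\boldsymbol\lambda)$ in $\mathbb P(\F^{n^3})$ being Zariski closed. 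I would organize the whole argument around this equality.

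For the two distinguished algebras I would trap the orbit closure inside an explicit closed invariant set. For $\mathfrak r_n$, let $C\subseteq\SknFa$ be the set of structure vectors whose algebra satisfies $[x,y]\in\Fspan\{x,y\}$ for all $x,y\in V$; the defining relations $x\wedge y\wedge[x,y]=0$ are polynomial in the structure constants and invariant under change of basis, so $C$ is Zariski closed and $G$-stable, and by the Note preceding the theorem its nonzero points form exactly $\orb(\boldsymbol\lambda)$ for $\boldsymbol\lambda$ a structure vector of $\mathfrak r_n$. As $C$ is closed and contains this orbit it contains its closure, giving $\overline{\orb(\boldsymbol\lambda)}=\orb(\boldsymbol\lambda)\cup\{0\}$. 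For $\mathfrak h_n$ I would instead impose the closed $G$-invariant conditions $\dim[\mathfrak g,\mathfrak g]\le1$ (bracket of rank at most one), $[[\mathfrak g,\mathfrak g],\mathfrak g]=0$ (nilpotency of class at most two) and $\dim Z(\mathfrak g)\ge n-2$ (the map $x\mapsto\ad x$ of rank at most two), each a determinantal or polynomial condition. A direct check shows their common nonzero solutions are precisely the algebras $[x,y]=B(x,y)z$ with $z$ central and the alternating form $B$ of rank two, i.e. a single orbit, that of $\mathfrak h_n$; closedness again yields $\overline{\orb(\boldsymbol\lambda)}=\orb(\boldsymbol\lambda)\cup\{0\}$.

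For the converse I would assume $\mathfrak g$ nonabelian with the level-one property and, whenever $\mathfrak g\not\cong\mathfrak r_n,\mathfrak h_n$, manufacture a nonabelian proper degeneration, the contradiction forcing the classification. The device is the diagonal degeneration: in any basis a weight vector $(a_1,\dots,a_n)$ sends $c_{ij}^k\mapsto t^{a_i+a_j-a_k}c_{ij}^k$, and if every nonzero constant has $a_i+a_j-a_k\ge0$ then $\lim_{t\to0}$ exists in $\overline{\orb(\boldsymbol\lambda)}$, retaining precisely the constants with $a_i+a_j=a_k$. I split on nilpotency, since $\mathfrak h_n$ is nilpotent while $\mathfrak r_n$ is not. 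If $\mathfrak g$ is nilpotent, its associated graded for the lower central series is a degeneration, realized by the weights $a_k=-(\text{filtration depth of }u_k)$, for which no exponent is negative; so I may assume $\mathfrak g$ is graded. Then, whenever $\dim[\mathfrak g,\mathfrak g]\ge2$, lowering by one the weight of a single homogeneous derived basis vector chosen at an extremal degree (so that it never occurs as an input) kills it as an output while preserving the rest, and the limit is nonabelian with strictly smaller derived dimension, which is excluded. Hence $\dim[\mathfrak g,\mathfrak g]=1$, so $[x,y]=B(x,y)z$ with $z$ central; a final weight lowering one symplectic pair collapses any $B$ of rank $\ge4$ to smaller even rank, so $\rank B=2$ and $\mathfrak g\cong\mathfrak h_n$.

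The non-nilpotent branch, which must terminate at $\mathfrak g\cong\mathfrak r_n$, is the main obstacle. Here some operator $\ad x$ fails to be nilpotent, and for a Lie algebra its eigenspace decomposition would grade $\mathfrak g$ and feed the same weighting machine; the difficulty is that over an arbitrary field the nonzero eigenvalues of $\ad x$ need not lie in $\F$, need not be rational, and $\mathfrak g$ need not even satisfy the Jacobi identity, so one cannot simply diagonalize and read off integer weights. My plan is to argue basis-freely: using the level-one hypothesis I would show that any relation $[x,y]\notin\Fspan\{x,y\}$ produces, via a rank-lowering diagonal degeneration on a suitably chosen low-dimensional configuration, a nonabelian limit that is neither $0$ nor isomorphic to $\mathfrak g$ (for instance of strictly smaller orbit dimension), contradicting level-one; this forces the closed condition $[x,y]\in\Fspan\{x,y\}$ for all $x,y$, whereupon the Note preceding the theorem identifies $\mathfrak g$ with $\mathfrak r_n$. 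The delicate points, namely producing the degeneration uniformly in $\F$ and certifying that the manufactured limits are genuinely non-isomorphic to $\mathfrak g$ rather than merely lying in $\overline{\orb(\boldsymbol\lambda)}$, are where the substantive work concentrates.
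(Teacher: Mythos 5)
Your reformulation and your treatment of the two distinguished algebras are sound and essentially coincide with the paper's: your set $C$ is the paper's subspace $P$ (Lemma~\ref{LemmaPIsOrbitClosureOfMu'} and Corollary~\ref{CorolPsatisfiesCond*}), and your three closed $G$-invariant conditions cutting out $O(\boldsymbol\eta)\cup\{{\bf 0}\}$ are an equivalent packaging of Lemmas~\ref{LemmadimZ=n-2Heisenberg} and~\ref{LemmaHeisenbergIsLevel1}. The genuine gap is in the converse direction, which you leave unfinished: the nilpotent branch is only sketched (and ``nilpotent'' is already delicate here, since elements of $\SknFa$ need not be Lie algebras, so the standard structure theory behind your associated-graded reduction is not available), and you explicitly concede that the non-nilpotent branch --- the case you call the main obstacle --- is open. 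As it stands the proposal does not prove the theorem.

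The missing idea is that no case split on nilpotency, no passage to an associated graded algebra, and no eigenvalue analysis of $\ad_x$ are needed. If $\mathfrak g\in\SknFa$ fails condition $(*)$, choose $x,y$ with $[x,y]\notin\Fspan(x,y)$; the identity $[x,x]=0$ forces $x$, $y$, $[x,y]$ to be linearly independent, so extend to a basis $e_1=x$, $e_2=y$, $e_3=[x,y]$, $e_4,\ldots,e_n$ and apply your own diagonal device with the single weight vector $(1,1,2,2,\ldots,2)$. Every exponent $q_i+q_j-q_k$ is nonnegative, and the surviving positions (those with $i,j\le2$ and $k\ge3$) carry exactly the structure constants of $\mathfrak h_n$, using $[e_1,e_1]=[e_2,e_2]=0_V$ and $[e_1,e_2]+[e_2,e_1]\in\Fspan(e_1,e_2)$. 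Thus every such $\mathfrak g$ degenerates to $\mathfrak h_n$ --- this is the paper's Lemma~\ref{LemmaNot(*)DegeneratesToHeis} --- and if $\mathfrak g\not\cong\mathfrak h_n$ its orbit closure already contains the three distinct orbits $O(\boldsymbol\lambda)$, $O(\boldsymbol\eta)$ and $\{{\bf 0}\}$, destroying the level-one property. This also dissolves your worry about certifying that the manufactured limit is non-isomorphic to $\mathfrak g$: the limit is always the one fixed algebra $\mathfrak h_n$. Since the algebras satisfying $(*)$ are exactly $\mathfrak a_n$ and $\mathfrak r_n$ by your own first step, the theorem follows.
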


Before stating the next result we need to introduce some more notation.
Let $\boldsymbol{\delta}_n=(\delta_{ijk})$ and $\boldsymbol{\varepsilon}_n(\alpha)=(\varepsilon_{ijk}(\alpha))$, for $\alpha\in\F$, be the elements of $\F^{n^3}$ which are defined by  $\delta_{112}=1_{\F}$ (all other $\delta_{ijk}$ being zero) and $\varepsilon_{111}(\alpha)=1_{\F}$, $\varepsilon_{1ii}(\alpha)=\alpha$, $\varepsilon_{i1i}(\alpha)=(1_{\F}-\alpha)$ for $2\le i\le n$
(all other $\varepsilon_{ijk}(\alpha)$ being zero).
Also, for $\alpha\in\F$, let $\mathfrak{d}_n$ and $\mathfrak{e}_n(\alpha)$ be the elements of $\AnF$ whose structure vectors relative to our fixed basis $(v_i^*)_{i=1}^n$ are $\boldsymbol{\delta}_n$ and $\boldsymbol{\varepsilon}_n(\alpha)$ respectively.

\begin{theorem}\label{TheoremMainPart2}
Let $n\ge3$ and let $\F$ be an arbitrary infinite field.
Then $\mathfrak{d}_n$ together with the family $\{\mathfrak{e}_n(\alpha):\ \alpha\in\F\}$ give a complete list of non-isomorphic elements of $\AnF\setminus\SknFa$ which have $\mathfrak{a}_n$ as their only proper degeneration.
\end{theorem}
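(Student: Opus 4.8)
The plan is to establish three things: that $\mathfrak{d}_n$ and each $\mathfrak{e}_n(\alpha)$ really do have $\mathfrak{a}_n$ as their only proper degeneration (sufficiency); that every $\mathfrak{g}\in\AnF\setminus\SknFa$ with this property is isomorphic to one of them (necessity); and that the listed algebras are pairwise non-isomorphic. The organizing invariant will be $r(\mathfrak{g}):=\dim\Fspan\{[x,x]:x\in V\}$, the dimension of the span of the squares. This is characteristic-free, and since $r\le k$ is a Zariski-closed condition on structure vectors, $r$ can only decrease under degeneration. One computes $r(\mathfrak{d}_n)=1$ (the span is $\F v_2^*$) while $r(\mathfrak{e}_n(\alpha))=n$ (already $v_1^*=[v_1^*,v_1^*]$ and $v_i^*=[v_1^*+v_i^*,v_1^*+v_i^*]-[v_1^*,v_1^*]$ lie in it), so the two families sit at opposite extremes. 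For non-isomorphism I use intrinsic invariants: $\mathfrak{d}_n$ has no nonzero idempotent whereas each $\mathfrak{e}_n(\alpha)$ does (every $v_1^*+w$, $w\in\Fspan(v_2^*,\ldots,v_n^*)$, satisfies $[x,x]=x$), and $\alpha$ is recovered as the scalar by which any such idempotent $u$ acts on the left on the intrinsic subspace $\{x:[x,x]=0\}$; hence $\mathfrak{e}_n(\alpha)\cong\mathfrak{e}_n(\beta)$ forces $\alpha=\beta$, and no $\mathfrak{e}_n(\alpha)$ is isomorphic to $\mathfrak{d}_n$.

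For sufficiency I exploit that commutativity ($[x,y]=[y,x]$ for all $x,y$), membership in $\SknFa$, the bound $\dim[\mathfrak{g},\mathfrak{g}]\le k$, and $\dim\ann\mathfrak{g}\ge k$ are all Zariski-closed and hence inherited by every degeneration. Since $\mathfrak{d}_n$ is commutative with $\dim[\mathfrak{d}_n,\mathfrak{d}_n]=1$ and $\dim\ann\mathfrak{d}_n=n-1$, any nonabelian proper degeneration $\mathfrak{g}_1$ would be commutative with a one-dimensional span of products and $\ann\mathfrak{g}_1$ a hyperplane; the associated symmetric form then has rank one, so $[x,y]=\ell(x)\ell(y)w$ for a linear form $\ell$, and $\mathfrak{g}_1$ is isomorphic either to $\mathfrak{d}_n$ (when $w\in\ann\mathfrak{g}_1$) or to the idempotent algebra $\F u\oplus\mathfrak{a}_{n-1}$ (when $w\notin\ann\mathfrak{g}_1$). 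A direct automorphism count gives $\dim\orb(\F u\oplus\mathfrak{a}_{n-1})=2n-1>2n-2=\dim\orb(\mathfrak{d}_n)$, so the idempotent algebra cannot lie in the boundary of the orbit of $\mathfrak{d}_n$; thus the only proper degeneration of $\mathfrak{d}_n$ is $\mathfrak{a}_n$. For $\mathfrak{e}_n(\alpha)$ I use that the commutator map $[x,y]\mapsto[x,y]-[y,x]$ is $G$-equivariant and linear, hence carries the orbit closure of $\mathfrak{e}_n(\alpha)$ into that of its commutator algebra $\mathfrak{e}_n(\alpha)^-$, which is $\cong\mathfrak{r}_n$ when $2\alpha\ne1_{\F}$ and $\cong\mathfrak{a}_n$ when $2\alpha=1_{\F}$; Theorem~\ref{TheoremMain} then pins down the skew part of any degeneration, and a complementary analysis of the symmetric structure together with an orbit-dimension comparison forces every proper degeneration to be $\mathfrak{a}_n$.

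Necessity is the heart of the argument and splits according to $r(\mathfrak{g})$. Because $r$ drops under degeneration while $\mathfrak{a}_n$ has $r=0$, minimality should confine $\mathfrak{g}$ to the extreme values $r=1$ and $r=n$, and the main labour is to show that every intermediate value $1<r<n$ (and, within $r=1$, every nonzero skew part) produces a nonabelian proper degeneration different from $\mathfrak{a}_n$, contrary to hypothesis. In the regime $r=1$ one writes $[x,x]=Q(x)\,w$ for a nonzero quadratic form $Q$ and a fixed $w$, shows that $\ann\mathfrak{g}$ must be a hyperplane and the skew part must vanish (otherwise a suitable one-parameter subgroup degenerates $\mathfrak{g}$ onto a nonabelian skew-symmetric algebra, excluded by Theorem~\ref{TheoremMain}), and normalizes the remaining datum to $\mathfrak{d}_n$. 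In the regime $r=n$ one produces an idempotent $u$ with $[u,u]=u$, records the intrinsic splitting $V=\F u\oplus\{x:[x,x]=0\}$, and uses minimality to force the left and right multiplications $L_u,R_u$ to act as scalars on $\{x:[x,x]=0\}$ and the products among that subspace to vanish; reading the eigenvalue $\alpha$ of $L_u$ there (and checking $R_u$ acts by $1_{\F}-\alpha$) identifies $\mathfrak{g}\cong\mathfrak{e}_n(\alpha)$.

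The principal obstacle is that we work over an arbitrary infinite, hence possibly non-algebraically-closed, field, so the Grunewald--O'Halloran one-parameter criterion is unavailable in its clean $\mathbb{C}$-form and the degenerations needed to eliminate the intermediate values of $r$ must be built by hand as limits of one-parameter subgroups with Laurent-polynomial entries, the Zariski limits being verified directly on structure constants rather than inferred from closure of the field. A second difficulty is that $\alpha$ ranges over all of $\F$, so $\{\mathfrak{e}_n(\alpha)\}$ is a genuine one-parameter family whose members must be separated without eigenvalue arguments presupposing a splitting field; the intrinsic description of $\alpha$ above is what makes this field-independent. Finally, the symmetric/skew splitting used for $\mathfrak{e}_n(\alpha)$ presupposes $\Char\F\ne2$, so the case $\Char\F=2$ must be handled through the characteristic-free data $[x,x]$ and $[x,y]-[y,x]$ alone; I expect this, together with the intermediate-$r$ elimination, to be where essentially all of the work lies.
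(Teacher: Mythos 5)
Your outline is coherent and several pieces are genuinely fine (the non-isomorphism argument via idempotents and the intrinsically defined scalar $\alpha$, and the rank-one symmetric-form analysis for degenerations of $\mathfrak{d}_n$), but the two steps that carry the theorem are not actually proved. First, necessity: you reduce it to showing that every $\mathfrak{g}$ with $1<r(\mathfrak{g})<n$, and every $\mathfrak{g}$ with $r=1$ and nonzero skew part, admits a nonabelian proper degeneration, and you defer the constructions to unspecified ``one-parameter subgroups with Laurent-polynomial entries.'' That deferral is the theorem. In particular, the claim that a nonzero skew part can be ``extracted'' as a degeneration is not automatic --- the commutator algebra $\mathfrak{g}^-$ is not in general a degeneration of $\mathfrak{g}$ --- so something must be exhibited. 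The paper avoids the stratification by $r$ altogether: it partitions $\AnF$ by the conditions $(**)$ ($[x,x]\in\Fspan(x)$ for all $x$) and $(*)$ ($[x,y]\in\Fspan(x,y)$ for all $x,y$) and, using the weighted-basis degeneration lemma (Lemma~\ref{LemmaTechnique*}, which is exactly the ``verify the Zariski limit directly on structure constants'' device you anticipate, done once and for all), shows: failure of $(**)$ forces a degeneration to $\mathfrak{d}_n$; $(**)$ without $(*)$ forces a degeneration to $\mathfrak{h}_n$; and $(*)$ outside $\SknFa$ forces a degeneration to some $\mathfrak{e}_n(\alpha)$. These three cases cover every algebra and immediately yield necessity, with no intermediate-$r$ analysis needed.

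Second, sufficiency for $\mathfrak{e}_n(\alpha)$ is left as ``a complementary analysis of the symmetric structure together with an orbit-dimension comparison,'' which is a placeholder, and orbit-dimension comparisons are themselves delicate here: the fact that boundary orbits have strictly smaller dimension is a statement about orbits over $\overline{\F}$, and relating the Zariski closure of the set of $\F$-points of an orbit to the picture over $\overline{\F}$ requires an argument you do not supply (the paper deliberately avoids all such comparisons). The paper's Remark~\ref{RemarkPropOfDE}(i) settles this case in one stroke: $O(\boldsymbol{\varepsilon}_n(\alpha))\cup\{{\bf 0}\}$ is computed explicitly to be an $\F$-linear subspace of $\F^{n^3}$ (the structure vectors of the algebras $\mathfrak{g}(\alpha;\beta_1,\ldots,\beta_n)$), hence Zariski-closed, hence equal to $\overline{O(\boldsymbol{\varepsilon}_n(\alpha))}$; the same computation gives the pairwise disjointness of the orbits for distinct $\alpha$. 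Even in your $\mathfrak{d}_n$ case, where your argument is essentially complete, the exclusion of the idempotent algebra $\F u\oplus\mathfrak{a}_{n-1}$ from $\overline{O(\boldsymbol{\delta}_n)}$ is obtained more robustly from the closed condition $[[x,y],z]=0_V$ (which $\mathfrak{d}_n$ satisfies and the idempotent algebra does not) than from comparing $2n-1$ with $2n-2$.
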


We remark that the special case $\F=\mathbb C$ in Theorem~\ref{TheoremMainPart2} is already settled in~\cite[Theorem~2.3]{Khudoyberdiyev&Omirov2013}.

\medskip
The paper is organized as follows.
In Section~\ref{SectionPreliminaries} we give some necessary background.
In Section~\ref{SectionDegenerations} we introduce the notion of degeneration and discuss some necessary conditions for an algebra to degenerate to another.
Moreover, in Section~\ref{SectionDegenerations}, a key result is proved (Lemma~\ref{LemmaTechnique*}) which enables us to `translate' results of limiting processes (involving diagonal matrices) in the metric topology of $\mathbb C$ or $\mathbb R$ to results in the Zariski topology over an arbitrary infinite field.
In Section~\ref{SectionLevel1} we provide a completely elementary self-contained proof of Theorem~\ref{TheoremMain} (where we also use ideas from~\cite{Khudoyberdiyev&Omirov2013,Lauret2003}) and we discuss how the conclusion of this theorem can be used to obtain information about the composition series of $\SknFa$ (regarded as an $\F G$-module) via the action of $G=\GlnF$ we are considering.
Finally, in Section~\ref{SectionLevel1NonSkewSym} we provide a proof of Theorem~\ref{TheoremMainPart2} which uses ideas from~\cite{Khudoyberdiyev&Omirov2013}.

%\medskip
%In a further paper~\cite{Ivanova&Pallikaros2017b} we determine the remaining $n$-dimensional $\F$-algebras that have $\mathfrak{a}_n$ as their only proper degeneration (thus completing the classification of ``level one'' algebras over an arbitrary field $\F$).

\section{Preliminaries}\label{SectionPreliminaries}
Fix a positive integer $n$ with $n\ge2$ and an arbitrary field $\F$.
Also let $G=\GlnF$.

It will be convenient to regard the $n^3$ triples $(i,j,k)$ for $1\le i,j,k\le n$ as an ordered $n^3$-tuple
of triples by placing, for $1\le m\le n^3$, at the $m^{\rm th}$ position of this $n^3$-tuple the triple $(i_1,j_1,k_1)$ where $i_1$, $j_1$, $k_1$ are the unique integers with $1\le i_1,j_1,k_1\le n$ satisfying $m-1=(i_1-1)n^2+(j_1-1)n+(k_1-1)$.

\begin{definition}\label{DefAlgebra}
(i) An $n$-dimensional $\F$-algebra  (not necessarily associative) is a pair $(A,[,])$ where $A$ is a vector space over $\F$ with $\dim_{\F}A=n$ and
$[,]:A\times A\to A:$ $(x,y)\mapsto[x,y]$ ($x,y\in A$) is an $\F$-bilinear map.
We call $[x,y]$ the product of $x$ and $y$.

(ii) An ideal of an $\F$-algebra $(A,[,])$ is an $\F$-subspace $J$ of $A$ such that $[a,b]$, $[b,a]\in J$ for all $b\in J$ and $a\in A$.
\end{definition}

We comment here that it is more common to use the notation $xy$ for the product of $x$ and $y$.
In this paper, however, we adopt the bracket notation as our main interest is focused on Lie algebras.

Let $(A,[,])$ be an $n$-dimensional $\F$-algebra and suppose that $(b_1,\ldots,b_n)\in A^n$ is an ordered $\F$-basis for the vector space $A$.
Then, by bilinearity of the bracket, we see that the multiplication in $(A, [,])$ is completely determined by the products $[b_i,b_j]$, $1\le i,j\le n$.
The structure constants of this algebra with respect to the basis $(b_1,\ldots,b_n)$ are the scalars $\alpha_{ijk}\in\F$ ($1\le i,j,k\le n$) given by
$[b_i,b_j]=\sum_{k=1}^n\alpha_{ijk}b_k$.
We will regard this set of structure constants $\alpha_{ijk}$ as an ordered $n^3$-tuple $(\alpha_{ijk})_{1\le i,j,k\le n}$ in $\F^{n^3}$ via the ordering on the triples $(i,j,k)$ for $1\le i,j,k\le n$ we have fixed above.
(For example, when $n=2$, we have $\alpha_{ijk}=(\alpha_{111},\alpha_{112},\alpha_{121},\alpha_{122},\alpha_{211},\alpha_{222})\in\F^{8}$.)
We call the $n^3$-tuple $\boldsymbol\alpha=(\alpha_{ijk})\in\F^{n^3}$ the structure vector of $(A, [,])$ relative to the $\F$-basis $(b_1,\ldots,b_n)$ of~$A$.
More generally, we call the element $\boldsymbol\beta=(\beta_{ijk})_{1\le i,j,k\le n}\in\F^{n^3}$ a structure vector for $(A, [,])$ if there exists an ordered $\F$-basis $(b_1',\ldots,b_n')$ for $A$ relative to which the structure vector for $(A,[,])$ is $\boldsymbol\beta=(\beta_{ijk})$,
that is $[b_i',b_j']=\sum_{k=1}^n\beta_{ijk}b_k'$ for $1\le i,j\le n$.

\medskip
%\begin{definition*}%\label{DefAlgebrasIsomorphic}
Recall that the $n$-dimensional $\F$-algebras $(A_1,[,]_1)$ and $(A_2,[,]_2)$ are called isomorphic if there exists a bijective $\F$-linear map $\Psi:A_1\to A_2$ such that $\Psi([x,y]_1)=[\Psi(x),\Psi(y)]_2$ for all $x,y\in A_1$.
%\end{definition*}

\begin{remark}\label{RemarkIsomorphicAlgStructureVectors}
(i) Suppose that $\Psi:A_1\to A_2$ defines an isomorphism between the $n$-dimensional $\F$-algebras $(A_1,[,]_1)$ and $(A_2,[,]_2)$.
Also let $(c_1,\ldots,c_n)$ be an $\F$-basis of $A_1$.
It is easy to observe that the structure vectors of $(A_1,[,]_1)$ and $(A_2,[,]_2)$ relative to the $\F$-bases
$(c_1,\ldots,c_n)$ and $(\Psi(c_1),\ldots,\Psi(c_n))$ of $A_1$ and $A_2$ respectively coincide.

(ii) Conversely, if the two $n$-dimensional $\F$-algebras $(A_1,[,]_1)$ and $(A_2,[,]_2)$ have a common structure vector, then they are necessarily $\F$-isomorphic.
\end{remark}

\smallskip
For the rest of the paper we fix $V$ to be an $n$-dimensional $\F$-vector space.
We also fix $(v_1^*,\ldots,v_n^*)$ to be an ordered $\F$-basis of $V$.
We will be referring to this basis of $V$ on several occasions in the sequel.

\begin{definition}\label{DefAlgebraStruct}
We call $\mathfrak{g}$ an algebra structure on~$V$ if $\mathfrak{g}$ is an $\F$-algebra having $V$ as its underlying vector space
(and hence has multiplication defined via a suitable $\F$-bilinear map $[,]_{\mathfrak{g}}:V\times V\to V$).
We denote by~$\AnF$ the set of all algebra structures on~$V$.
\end{definition}

It is then clear from the preceding discussion that any $n$-dimensional $\F$-algebra is isomorphic to an element of $\AnF$.
For $\mathfrak{g}_1,\mathfrak{g}_2\in\AnF$ we write $\mathfrak{g}_1\cong \mathfrak{g}_2$ to denote the fact that $\mathfrak{g}_1$ is isomorphic to
$\mathfrak{g}_2$ as $\F$-algebras.

Also observe that if $\mathfrak{g}=(V,[,]_{\mathfrak{g}})$ is an algebra structure on~$V$, we have $[0_V,x]_{\mathfrak{g}}=0_V=[x,0_V]_{\mathfrak{g}}$ for all $x\in V$.
Hence, the zero vector $0_V$ of $V$ is the zero element of any $\mathfrak{g}\in\AnF$.

\begin{remark}\label{RemarkAnFFn3AreVectorSpaces}
We can regard $\AnF$ as an $\F$-vector space:
For $\mathfrak{g}_1=(V,[,]_1)$, $\mathfrak{g}_2=(V,[,]_2)\in\AnF$ and $\alpha\in\F$ we define $\mathfrak{g}_1+\mathfrak{g}_2=(V,[,])\in\AnF$ where
$[x,y]=[x,y]_1+[x,y]_2$ and $\alpha \mathfrak{g}_1=(V,[,]_\alpha)$ where $[x,y]_\alpha=\alpha[x,y]_1$ for all $x,y\in V$.
It is then easy to check that the maps $[,]$, $[,]_\alpha:V\times V\to V$ are both $\F$-bilinear and that they turn $\AnF$ into an $\F$-space.

Regarding $\F^{n^3}$ as an $\F$-vector space, as usual, via the natural (componentwise) addition and scalar multiplication,
we can then obtain an isomorphism of $\F$-vector spaces $\Theta:\AnF\to\F^{n^3}$ such that the image of an algebra structure $\mathfrak{g}\in\AnF$ is the structure vector of $\mathfrak{g}$ relative to the basis $(v_1^*,\ldots,v_n^*)$ of $V$ we have fixed.
\end{remark}

\begin{definition}\label{DefOmegaMap}
With the help of the bijection $\Theta:\AnF\to\F^{n^3}$, given in Remark~\ref{RemarkAnFFn3AreVectorSpaces} we can define a map $\Omega:\F^{n^3}\times G\to\F^{n^3}:$ $(\boldsymbol\lambda,g)\mapsto\boldsymbol\lambda g$ ($\boldsymbol\lambda\in\F^{n^3}$, $g=(g_{ij})\in G$) where
$\boldsymbol\lambda g\in\F^{n^3}$ is the structure vector of $\Theta^{{}^{-1}}(\boldsymbol\lambda)\in\AnF$
relative to the basis $(v_1,\ldots,v_n)$ of $V$ given by $v_j=\sum_{i=1}^ng_{ij}v_i^*$.
(We call $g\in G$ the transition matrix from the basis $(v_i^*)_{i=1}^n$ to the basis $(v_i)_{i=1}^n$ of $V$.)
\end{definition}

\begin{remark}\label{RemarkChangeOfBasisStructureVector}
With the setup and notation of Definition~\ref{DefOmegaMap}, we see that $\Theta^{{}^{-1}}(\boldsymbol\lambda)$ is the algebra structure $\mathfrak{g}_1$ on $V$ having $\boldsymbol\lambda$ as its structure vector relative to the basis $(v_1^*,\ldots,v_n^*)$ of $V$ we have fixed.
If now $\mathfrak{g}_2\in\AnF$ also has structure vector $\boldsymbol\lambda$ relative to some basis $(u_i)_{i=1}^n$ of $V$, then the structure vector of $\mathfrak{g}_2$ relative to the basis $(u_i')_{i=1}^n$ of $V$ given by $u_j'=\sum_{i=1}^ng_{ij}u_i$ is again $\boldsymbol\lambda g$.
This means that the resulting vector $\boldsymbol\lambda g\in\F^{n^3}$ obtained from $\boldsymbol\lambda\in\F^{n^3}$ via the `change of basis' process in Definition~\ref{DefOmegaMap} which is determined by the element $g=(g_{ij})\in G$ does not depend on the choice of the pair
$(\mathfrak{g}\in\AnF,$ ordered basis for $V)$ that determines $\boldsymbol\lambda$ (note that in general there are more than one such pairs).
\end{remark}

It is then easy to observe that the map $\Omega$ defines a linear right action of $G$ on $\F^{n^3}$.
We can thus regard $\F^{n^3}$ as a right $\F G$-module via this action.

\medskip
The orbit of $\boldsymbol\lambda\in\F^{n^3}$ with respect to the action given in Definition~\ref{DefOmegaMap} will be denoted by $O(\boldsymbol\lambda)$.

\medskip

Note that the orbits resulting from this action correspond precisely to isomorphism classes of $n$-dimensional $\F$-algebras
(this is immediate from Remark~\ref{RemarkIsomorphicAlgStructureVectors}).

We now recall briefly some basic facts on algebraic sets.

Let $\F[{\bf X}]$ be the ring $\F[X_{ijk}:\ 1\le i,j,k\le n]$ of polynomials in the indeterminates $X_{ijk}$ ($1\le i,j,k\le n$) over $\F$.
For each $\boldsymbol\mu=(\mu_{ijk})\in\F^{n^3}$ we can define the evaluation map $\ev_{\boldsymbol\mu}:\F[{\bf X}]\to \F$
to be the unique ring homomorphism $\F[{\bf X}]\to \F$ such that $X_{ijk}\mapsto \mu_{ijk}$ for $1\le i,j,k\le n$
and which is the identity on $\F$.
A subset $W$ of $\F^{n^3}$ is algebraic (and thus closed in the Zariski topology on $\F^{n^3}$) if there exists a subset
$S\subseteq\F[{\bf X}]$ such that
$W=\{\boldsymbol\mu=(\mu_{ijk})\in\F^{n^3}:$ $\ev_{\boldsymbol\mu}(f)=0_{\F}$ for all $f\in S\}$.
The Zariski closure of a subset $Y$ of $\F^{n^3}$ will be denoted by $\overline{Y}$.

\medskip
Next we introduce some subsets of $\AnF$ which are defined via identities.

\vspace{-2ex}
\begin{enumerate}[(\it i\rm)]
\item
$\BnFa=\{\mathfrak{g}=(V,[,])\in\AnF:$ $[[x_1,x_2],x_3]=0_V$ for all $x_1,x_2,x_3\in V\}$.\\
(This is a set of metabelian algebra structures on $V$.)

\item
$\SknFa=\{\mathfrak{g}=(V,[,])\in\AnF:$ $[x,x]=0_V$ for all $x\in V\}$.\\
(Note that $[x,x]=0_V$ for all $x\in V$ implies that $[x,y]=-[y,x]$ for all $x,y\in V$.
In particular, for $\Char\F\ne2$, we have that $\SknFa$ is precisely the set of skew-symmetric algebra structures on $V$.)

\item
$\LnFa=\{\mathfrak{g}=(V,[,])\in\SknFa:$ $[x,[y,z]]+[y,[z,x]]+[z,[x,y]]=0_V$ for all $x,y,z\in V\}$.\\
(This is the set of algebra structures in $\SknFa$ satisfying the Jacobi identity and hence it is precisely the set of Lie algebra structures on $V$.)
\end{enumerate}

%\marginpar{\footnotesize\it Alternative description of one of the sets}
Denote by $\BnF$, $\SknF$, $\LnF$ the subsets of $\F^{n^3}$ which are the images of $\BnFa$, $\SknFa$, $\LnFa$ respectively via the map $\Theta$ given in Remark~\ref{RemarkAnFFn3AreVectorSpaces}.

\begin{remark}\label{RemarkZarClosedSets}
It is immediate from the way they are defined that $\BnF$, $\SknF$ and $\LnF$ are all unions of orbits.
Below we will also see that these subsets of $\F^{n^3}$ are algebraic (Zariski-closed) as they can be described via polynomial equations.
For the rest of the discussion in this remark $(e_1,\ldots,e_n)$ denotes an arbitrary $\F$-basis of $V$ and $\mathfrak{g}=(V,[,])$ an element of $\AnF$.

(i)
To establish that $\BnF$ is algebraic, first observe that $\mathfrak{g}$ is metabelian if, and only if, $[[e_i,e_j],e_k]=0_V$ for $1\le i,j,k\le n$.
Consequently, we have $\BnF=\{\boldsymbol\lambda=(\lambda_{ijk})\in\F^{n^3}:\ \sum_l\lambda_{ijl}\lambda_{lkm}=0_{\F}$ for $1\le i,j,k,m\le n\}$
showing that $\BnF$ is a Zariski-closed subset of $\F^{n^3}$.

(ii)
Also observe that $\mathfrak{g}\in\SknFa$ if, and only if, we have $[e_i,e_i]=0_V$ and $[e_i,e_j]+[e_j,e_i]=0_V$ for $0\le i,j\le n$.
%{\it\small [Explanation:
%
%]
%}
It follows that $\SknF=\{\boldsymbol\lambda=(\lambda_{ijk})\in\F^{n^3}:\ \lambda_{iik}=0_{\F}$ and $\lambda_{ijk}+\lambda_{jik}=0_F$ for all $1\le i,j,k\le n\}$.

Clearly $\SknF$  is an $\F$-subspace of $\F^{n^3}$ ($\dim_{\F}\SknF=\frac12n^2(n-1)$) and hence it is an algebraic set.
(Alternatively, it is easy to see that $\boldsymbol\mu\in\SknF$ if, and only if,
$\ev_{\boldsymbol\mu}(f)=0_{\F}$ for all $f\in\{X_{iik}:\ 1\le i,k\le n\}\cup\{X_{ijk}+X_{jik}:\  1\le i,j,k\le n\}\subseteq \F[{\bf X}]$).

(iii)
Similar argument shows that
$\LnF=\{\boldsymbol\lambda=(\lambda_{ijk})\in\SknF:\ \sum_k(\lambda_{ijk}\lambda_{klm}+\lambda_{jlk}\lambda_{kim}+\lambda_{lik}\lambda_{kjm})=0_{\F}$ for $1\le i,j,l,m\le n\}$.
(See also~\cite[page~4]{Jacobson1962}.)
\end{remark}

Since $\SknF$ is an $\F$-subspace of $\F^{n^3}$ which is also a union of orbits, it can be regarded as an $\F G$-module %(for $G=\GlnF$)
via the (linear) action of $G$ on $\F^{n^3}$ we are considering.

For the rest of the paper, by an $\F G$-submodule of $\F^{n^3}$ we will mean any subspace of $\F^{n^3}$ which is also an $\F G$-module via the action of $G$ on $\F^{n^3}$ given in Definition~\ref{DefOmegaMap}.

\begin{example}\label{ExampleAbelianOrbit}
Let $\mathfrak{a}_n$ be the algebra structure on $V$ with multiplication defined by $[x,y]=0_V$ for all $x,y\in V$.
Then the structure vector of $\mathfrak{a}_n$ relative to any basis of $V$ is the zero vector ${\bf 0}=(0_{\F},\ldots,0_{\F},\ldots,0_{\F})$ of $\F^{n^3}$.
The orbit of ${\bf 0}$ under the action of $G$ we have described thus consists of precisely one point and hence it is Zariski-closed.
Clearly $\mathfrak{a}_n\in\LnFa$, in other words it is a Lie algebra structure on $V$.
The algebra $\mathfrak{a}_n$ is known as the $n$-dimensional abelian Lie algebra over $\F$.

Observe that for $\Char \F\ne2$ the only algebra structure $\mathfrak{g}=(V,[,])\in\SknF$ satisfying the commutativity relation $[x,y]=[y,x]$ for all $x,y\in V$ is the abelian Lie algebra $\mathfrak{a}_n$.
Note, however, that when $\Char\F=2$, we have that every skew-symmetric algebra satisfies the commutativity relation.
\end{example}

\section{Degenerations}\label{SectionDegenerations}

Recall that for $\boldsymbol\lambda\in\F^{n^3}$ and $g\in G$ we denote by $\boldsymbol\lambda g$ the image of $(\boldsymbol\lambda,g)\in\F^{n^3}\times G$ under the map $\Omega$ in Definition~\ref{DefOmegaMap}.

For each $g\in G$ define a function $\Phi_g:\F^{n^3}\to\F^{n^3}:$ $\boldsymbol\mu\mapsto\boldsymbol\mu g$ ($\boldsymbol\mu\in\F^{n^3}$).
Then $\Phi_g$ is a regular map for each $g\in G$ and hence continuous in the Zariski topology.
[To see this, fix $g\in G$.
It follows from the change of basis process that for each $\boldsymbol\mu\in\LnF$ we get
$\varphi_g(\boldsymbol\mu)=(\ev_{\boldsymbol\mu}(f_1),\ldots,\ev_{\boldsymbol\mu}(f_{n^3}))$ where, for $1\le i\le n^3$, $f_i$ is a polynomial
in $\F[\boldsymbol X]$ which only depends on the element $g\in G$.]

\medskip
The following result is elementary but for completeness we provide a proof for it.

\vspace{-2ex}
\begin{lemma}\label{LemmaUnionOfOrbitsTransitivity}
Let $\boldsymbol\lambda,\boldsymbol\mu\in\F^{n^3}$ with $\mu\in\overline{O(\boldsymbol\lambda)}$.
Then $O(\boldsymbol\mu)\subseteq \overline{O(\boldsymbol\lambda)}$ (and hence $\overline{O(\boldsymbol\mu)}\subseteq \overline{O(\boldsymbol\lambda)}$).

\end{lemma}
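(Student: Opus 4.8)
The plan is to exploit the fact, recorded just above the statement, that each $\Phi_g:\F^{n^3}\to\F^{n^3}$ is continuous in the Zariski topology, together with the group-theoretic observation that right-translation by a fixed element of $G$ permutes the points of an orbit. First I would recall the standard topological fact that a continuous map $f$ satisfies $f(\overline{Y})\subseteq\overline{f(Y)}$ for every subset $Y$ of its domain: indeed, $f^{-1}(\overline{f(Y)})$ is closed, being the preimage of a closed set under a continuous map, and it contains $Y$, hence it contains $\overline{Y}$, which yields the inclusion.

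Next I would fix an arbitrary point $\boldsymbol\nu\in O(\boldsymbol\mu)$ and write $\boldsymbol\nu=\boldsymbol\mu h=\Phi_h(\boldsymbol\mu)$ for a suitable $h\in G$. Applying the continuity fact to $f=\Phi_h$ and $Y=O(\boldsymbol\lambda)$ gives
\[
\Phi_h\bigl(\overline{O(\boldsymbol\lambda)}\bigr)\subseteq\overline{\Phi_h\bigl(O(\boldsymbol\lambda)\bigr)}.
\]
The key computation is then to identify $\Phi_h(O(\boldsymbol\lambda))$ with $O(\boldsymbol\lambda)$ itself. Since we are dealing with a right action of the group $G$, we have $\Phi_h(\boldsymbol\lambda g)=(\boldsymbol\lambda g)h=\boldsymbol\lambda(gh)$, and as $g$ ranges over $G$ so does $gh$; hence $\Phi_h(O(\boldsymbol\lambda))=O(\boldsymbol\lambda)$.

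Combining these observations, and using the hypothesis $\boldsymbol\mu\in\overline{O(\boldsymbol\lambda)}$, I would obtain
\[
\boldsymbol\nu=\Phi_h(\boldsymbol\mu)\in\Phi_h\bigl(\overline{O(\boldsymbol\lambda)}\bigr)\subseteq\overline{\Phi_h\bigl(O(\boldsymbol\lambda)\bigr)}=\overline{O(\boldsymbol\lambda)}.
\]
As $\boldsymbol\nu\in O(\boldsymbol\mu)$ was arbitrary, this establishes $O(\boldsymbol\mu)\subseteq\overline{O(\boldsymbol\lambda)}$. The parenthetical assertion then follows at once, since $\overline{O(\boldsymbol\lambda)}$ is a closed set containing $O(\boldsymbol\mu)$ and must therefore contain the smallest closed set containing $O(\boldsymbol\mu)$, namely $\overline{O(\boldsymbol\mu)}$.

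As for the main obstacle: the argument is entirely formal, so there is no serious difficulty. The only point requiring a little care is the verification that right-translation by $h$ carries the orbit onto itself, which hinges on $G$ being a group (in particular on closure under the operation $g\mapsto gh$) rather than merely a collection of regular maps.
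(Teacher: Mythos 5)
Your proof is correct and follows essentially the same route as the paper: both arguments rest on the continuity of the right-translation maps $\Phi_h$ together with the observation that $\Phi_h$ carries $O(\boldsymbol\lambda)$ onto itself. The only difference is presentational — you invoke the standard fact $f(\overline{Y})\subseteq\overline{f(Y)}$ for continuous $f$, whereas the paper unpacks exactly that fact into an explicit open-set argument with $\Phi_{g^*}^{-1}(U)$.
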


\begin{proof}
Assume the hypothesis and let $\mu^*\in O(\boldsymbol\mu)$.
It follows that $\boldsymbol\mu^*=\boldsymbol\mu g^*$ for some $g^*\in G$.
Suppose now that $U$ is an open subset of $\F^{n^3}$ containing $\boldsymbol\mu^*$.
It is enough to show that $O(\boldsymbol\lambda)\cap U\ne\varnothing$.
Invoking the fact that the map $\Phi_{g^*}:\F^{n^3}\to\F^{n^3}$ is continuous, we see that $\Phi_{g^*}^{-1}(U)$ is an open subset of $\F^{n^3}$.
Now $\boldsymbol\mu^*=\boldsymbol\mu g^*=\Phi_{g^*}(\boldsymbol\mu)\in U$, so $\boldsymbol\mu\in\Phi_{g^*}^{-1}(U)$.
But $\mu\in\overline{O(\boldsymbol\lambda)}$, hence there exists $\boldsymbol\lambda'\in O(\boldsymbol\lambda)$ such that $\boldsymbol\lambda'\in\Phi_{g^*}^{-1}(U)$.
This means that $\Phi_{g^*}(\boldsymbol\lambda')=\boldsymbol\lambda'g^*\in U$.
Hence $\boldsymbol\lambda'g^*\in O(\boldsymbol\lambda)\cap U$ ensuring that $O(\boldsymbol\lambda)\cap U$ is non-empty as required.
\end{proof}

Much more can be said using the theory of algebraic groups (\cite{Borel,Geck2003,Humphrays1991}).
In particular, orbits are locally closed, see~\cite[Proposition~6.7 of Chapter~II]{Borel}.

%More can be said under the assumption that $\F$ is algebraically closed.
%In particular, under this additional assumption on $\F$, we have that every orbit is open in its closure.
%See for example~\cite[Proposition~2.5.2]{Geck2003} or~\cite[Subsection~II.8.3]{Humphrays1991}.

\begin{definition}\label{DefDegeneration}
Let $\mathfrak{g}_1, \mathfrak{g}_2\in\AnF$.
We say that $\mathfrak{g}_1$ degenerates to $\mathfrak{g}_2$ (respectively, $\mathfrak{g}_1$ properly degenerates to $\mathfrak{g}_2$)
%and denote this by
%$\mathfrak{g}_1\to\mathfrak{g}_2$ (respectively, $\mathfrak{g}_1\to_p\mathfrak{g}_2$)
if there exist structure vectors $\boldsymbol\lambda_1$ of $\mathfrak{g}_1$ and $\boldsymbol\lambda_2$ of $\mathfrak{g}_2$ such that $\boldsymbol\lambda_2\in\overline{O(\boldsymbol\lambda_1)}$
(respectively, $\boldsymbol\lambda_2\in\overline{O(\boldsymbol\lambda_1)}\setminus O(\boldsymbol\lambda_1)$).
\end{definition}

In the following remark we include some observations which are immediate from this definition.

\begin{remark}\label{RemarkDegenAlg=OrbitClosStructVectorsDegenIsomClasses}
Let $\mathfrak{g}_1, \mathfrak{g}_2\in\AnF$.

(i) We have that $\mathfrak{g}_1$ degenerates to $\mathfrak{g}_2$ if, and only if, $\boldsymbol\mu_2\in\overline{O(\boldsymbol\mu_1)}$ whenever
$\boldsymbol\mu_1$ is a structure vector of $\mathfrak{g}_1$ and $\boldsymbol\mu_2$ is a structure vector of $\mathfrak{g}_2$
(similarly for proper degenerations).

(ii) Suppose that $\mathfrak{g}_1^*,\mathfrak{g}_2^*\in\AnF$ are such that $\mathfrak{g}_1^*\cong\mathfrak{g}_1$ and $\mathfrak{g}_2^*\cong\mathfrak{g}_2$.
It then follows that $\mathfrak{g}_1^*$ degenerates (resp., properly degenerates) to $\mathfrak{g}_2^*$ whenever $\mathfrak{g}_1$ degenerates (resp., properly degenerates) to $\mathfrak{g}_2$.

\end{remark}

It is then clear from the above definition and remark that the notion of degeneration
(since it only depends on the isomorphism classes of the algebras involved)
can be extended to cover all $\F$-algebras and not just algebra structures on our vector space~$V$.

\begin{example}\label{ExampleFiniteFieldOnePointCasen=2}
(i)
(The case $\F$ is a finite field.)
If $\F$ is finite, it is then obvious that any orbit in $\F^{n^3}$ under the action of $G$ we are considering consists of a finite set of points.
But points are closed sets in the Zariski topology.
Consequently, when $\F$ is finite, all orbits in~$\F^{n^3}$ are closed in the Zariski topology.
We conclude that there exist no proper degenerations over finite fields.

(ii)
(The case $n=2$.)
Let $\boldsymbol\lambda=(\lambda_{ijk})_{1\le i,j,k\le 2}\in\F^{2^3}$ where $\lambda_{121}=1_{\F}=-\lambda_{211}$, $\lambda_{122}=1_{\F}=-\lambda_{212}$
(all other $\lambda_{ijk}$ equal to $0_{\F}$).
(Then $\boldsymbol\lambda$ is a structure vector for $\mathfrak{g}=(V,[,])\in\AnF$ relative to basis $(e_1,e_2)$ of $V$ where
$[e_1,e_2]=e_1+e_2=-[e_2,e_1]$ and $[e_1,e_1]=0_V=[e_2,e_2]$.
Note that $\boldsymbol\lambda$ is a Lie algebra structure vector.)
Consider the change of basis $e_1'=\alpha e_1$, $e_2'=\beta e_2$ where $\alpha$, $\beta$ are both nonzero elements of $\F$.
The products between the elements of this new basis are given by:
$[e_1',e_1']=0_V=[e_2',e_2']$,
$[e_1',e_2']=\alpha\beta[e_1,e_2]=\alpha\beta(e_1+e_2)=\alpha\beta(\alpha^{-1}e_1'+\beta^{-1}e_2')=\beta e_1'+\alpha e_2'$,
$[e_2',e_1']=-\beta e_1'-\alpha e_2'$.
If, instead, we set $e_1'=\beta(e_1+e_2)$, $e_2'=\beta e_2$ with $\beta\ne0_{\F}$ (resp., $e_1'=\alpha e_1$, $e_2'=\alpha(e_1+e_2)$ with $\alpha\ne0_{\F}$), we get that $(0_{\F},0_{\F},\beta,0_{\F},-\beta,0_{\F},0_{\F},0_{\F})\in O(\boldsymbol\lambda)$
(resp., $(0_{\F},0_{\F},0_{\F},\alpha,0_{\F},-\alpha,0_{\F},0_{\F})\in O(\boldsymbol\lambda)$).
Consequently, $\mathcal{K}_2(\F)=\{(0_{\F},0_{\F},\beta,\alpha,-\beta,-\alpha,0_{\F},0_{\F}):\ \alpha,\beta\in\F\}
\subseteq O(\boldsymbol\lambda)\cup \{{\bf 0}\}\subseteq\mathcal{L}_2(\F)\subseteq\mathcal{K}_2(\F)$.
We conclude that $\mathcal{K}_2(\F)= O(\boldsymbol\lambda)\cup \{{\bf 0}\}$ and hence it is a union of two orbits.
But $\mathcal{K}_2(\F)$ is an irreducible subset of $\F^{n^3}$ if $\F$ is infinite
(this follows, for example, from~\cite[Example~1.1.3 and Remark~1.3.2]{Geck2003}) verifying
 that ${\bf 0}\in\overline{O(\boldsymbol\lambda)}$ under the assumption that $\F$ is infinite.
Moreover, the above argument shows that $\mathcal{K}_2(\F)$ is an irreducible $\F G$-submodule of $\F^{2^3}$
%(with respect to the action of $G=\GlnF$ on $\F^{n^3}$ we are considering)
for any field $\F$ (including finite fields).
\end{example}

In view of the above example, for the rest of this section and for the whole of Section~\ref{SectionLevel1} (which deal with $\SknF$) we will assume that $\F$ is an arbitrary infinite field and that $n\ge 3$.

%\section{On algebras with orbits closed in the projective topology}

\bigskip
Our next aim is to prove Lemma~\ref{LemmaTechnique*} which will play some part in the proof of our main result.

\begin{definition}\label{DefTDegF}
Let $S\subseteq T=\{(i,j,k)\in\mathbb Z^3:\ 1\le i,j,k\le n\}$.
We define $\deg_Sf$ for a non-zero monomial $f\in\F[X_{i,j,k}:\, (i,j,k)\in T]$ to be the degree of the monomial obtained from $f$ by specializing to $1_{\F}$
each of the indeterminates $X_{i,j,k}$ with $(i,j,k)\in T\setminus S$ and which occur in $f$.
In particular, $\deg_Tf=\deg f$.
\end{definition}

\begin{definition}
Let $\hat q=(q_1,\ldots, q_n)\in\mathbb Z^n$ be given.
Also let $T$ be as in Definition~\ref{DefTDegF}.\\
(i) For each $r\in\mathbb Z$, define the subset $S(\hat q,r)$ of $T$ by $S(\hat q,r)=\{(i,j,k)\in T: q_i+q_j-q_k=r\}$.\\
(ii) Define the $\hat q$-auxiliary degree of a nonzero monomial $f\in\F[X_{ijk}:\ (i,j,k)\in T]$ by
\[
\qadeg(f)
=\left\{
\begin{array}{ll}
\sum_{r\ge0}r\deg_{S(\hat q,r)}f, &\mbox{\footnotesize if no indeterminate $X_{ijk}$ with $(i,j,k)\in \cup_{r<0}S(\hat q,r)$ occurs in $f$,}\\
0,&\mbox{\footnotesize otherwise}.
\end{array}
\right.
\]
(iii) Given a structure vector $\boldsymbol{\lambda}=(\lambda_{ijk})\in \F^{n^3}$, define a new structure vector $\boldsymbol\lambda(\hat q)=(\lambda_{ijk}(\hat q))\in \F^{n^3}$ by
\[
\lambda_{ijk}(\hat q)
=\left\{
\begin{array}{ll}
\lambda_{ijk}, &\mbox{\footnotesize if $(i,j,k)\in \cup_{r\le0}S(\hat q,r)$,}\\
0_{\F},&\mbox{\footnotesize otherwise}.
\end{array}
\right.
\]
\end{definition}

It is clear from the above definition that, given $\hat q\in\mathbb Z^n$, only finitely many of the $S(\hat q,r)$ are nonempty as $r$ runs through $\mathbb Z$, and $T$ is the disjoint union of these nonempty $S(\hat q,r)$.

Moreover, nonzero constants (which are monomials of degree zero) have $\hat q$-auxiliary degree zero for any choice of $\hat q$.
However, in general, it is possible for a nonzero monomial $f$ (with $\deg f>0$) to have $\hat q$-auxiliary degree zero for a certain choice of
$\hat q\in\mathbb Z^n$, even though none of the indeterminates $X_{ijk}$ with $(i,j,k)\in \cup_{r<0}S(\hat q,r)$ occurs in $f$.
(These are precisely the monomials which are composed solely by indeterminates $X_{ijk}$ with $(i,j,k)\in S(\hat q, 0)$, provided
$S(\hat q, 0)\neq\varnothing$.)

\begin{example}\label{ExampleQAuxDeg}
(i)
Let $\hat q=(q_i)_{i=1}^n\in\mathbb Z^n$ with $q_i=1$ for $1\le i\le n$.
Then $S(\hat q, r)=\varnothing$ for $r\ne1$ and $T=S(\hat q,1)$.
Then, for a nonzero monomial $f\in\F[X_{ijk}:\ (i,j,k)\in T]$, we have $\qadeg f=\deg f$.

(ii)
Fix $m\in \mathbb Z$ with $1\le m<n$ and let $\hat q=(q_i)_{i=1}^n\in\mathbb Z^n$ with $q_i=0$ for $1\le i\le m$ and $q_i=1$ for $m+1\le i\le n$.
Then $S(\hat q, r)=\varnothing$ for all $r\in\mathbb Z\setminus\{-1,0,1,2\}$.
We also have $S(\hat q,-1)=\{(i,j,k)\in T:$ $1\le i,j\le m$ and $m+1\le k\le n\}$ and
$S(\hat q,0)=\{(i,j,k)\in T:$ $1\le i,j,k\le m\}\cup\{(i,j,k)\in T:$ $1\le i\le m$ and $m+1\le j, k\le n\}\cup
\{(i,j,k)\in T:$ $1\le j\le m$ and $m+1\le i, k\le n\}$.

(iii)
Suppose $n\ge3$ and let $\hat q=(q_i)_{i=1}^n\in\mathbb Z^n$ with $q_i=1$ for $1\le i\le 2$ and $q_i=2$ for $i\ge3$.
Here $S(\hat q,0)=\{(i,j,k)\in T:$ $1\le i,j\le2$ and $3\le k\le n\}$ and
$S(\hat q, r)=\varnothing$ for all $r\in\mathbb Z\setminus\{0,1,2,3\}$.
\end{example}

\begin{remark}\label{RemarqAuxDegreeDecomp}
Given $\hat q\in\mathbb Z^n$ and a nonzero polynomial $h\in\F[X_{ijk}:\ (i,j,k)\in T]$, it will be useful to consider the (unique) decomposition of $h$
(relative to $\hat q$) in the form $h=h_0+h_1+\ldots+h_m$ (with $h_l\in\F[X_{ijk}:\ (i,j,k)\in T]$ for $0\le l\le m$, $h_m\ne0$), where,
for each $l=0,1,\ldots,m$, the polynomial $h_l$ is the sum of all nonzero monomials of $\hat q$-auxiliary degree $l$ occurring in $h$.
(If for some $l$, with $0\le l\le m-1$, no monomial of $\hat q$-auxiliary degree $l$ occurs in $h$, then $h_l$ is taken to be the zero polynomial.)
\end{remark}

\begin{lemma}\label{LemmaTechnique*}
Let $\hat q=(q_i)_{i=1}^n\in\mathbb Z^n$ and let $\boldsymbol\lambda=(\lambda_{ijk})\in\F^{n^3}$ (where $\F$ is infinite).
Suppose further that $\lambda_{ijk}=0_{\F}$ whenever
$(i,j,k)\in\cup_{r<0}S(\hat q,r)$.
Then $\boldsymbol\lambda(\hat q)\in\overline{O(\boldsymbol\lambda)}$.
(In particular, the hypothesis of the lemma is satisfied regardless of the choice of $\boldsymbol\lambda$ by all $\hat q\in\mathbb Z^n$ such
that $\cup_{r<0}S(\hat q,r)=\varnothing$.)
\end{lemma}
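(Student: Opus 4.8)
The plan is to realise $\boldsymbol\lambda(\hat q)$ as the value at the ``origin'' of a regular curve lying inside $O(\boldsymbol\lambda)$, and then to invoke Zariski-continuity together with the density of $\F\setminus\{0_\F\}$ in the affine line to legitimise the substitution $t=0_\F$. Concretely, for each nonzero $t\in\F$ let $g_t\in G$ be the diagonal matrix $\mathrm{diag}(t^{q_1},\ldots,t^{q_n})$; since $t$ is invertible in $\F$ we have $\det g_t=t^{q_1+\cdots+q_n}\ne0_\F$, so indeed $g_t\in\GlnF$. First I would compute the effect of this change of basis directly from Definition~\ref{DefOmegaMap}: writing $v_j=t^{q_j}v_j^*$ and using bilinearity one obtains $(\boldsymbol\lambda g_t)_{ijk}=t^{q_i+q_j-q_k}\lambda_{ijk}$ for all $i,j,k$. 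Thus the exponent attached to $\lambda_{ijk}$ is precisely the integer $r$ for which $(i,j,k)\in S(\hat q,r)$.

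The hypothesis $\lambda_{ijk}=0_\F$ for $(i,j,k)\in\cup_{r<0}S(\hat q,r)$ is exactly what is needed so that no coordinate of $\boldsymbol\lambda g_t$ involves a negative power of $t$: whenever the exponent $q_i+q_j-q_k$ is negative, the corresponding $\lambda_{ijk}$ vanishes. Consequently the assignment $t\mapsto\boldsymbol\lambda g_t$, a priori defined only for $t\ne0_\F$, is given coordinatewise by genuine polynomials in $t$, and so extends to a regular map $\psi:\F\to\F^{n^3}$. Evaluating these polynomials at $t=0_\F$ annihilates every coordinate with $q_i+q_j-q_k>0$ and leaves every coordinate with $q_i+q_j-q_k\le0$ unchanged; comparing with the definition of $\boldsymbol\lambda(\hat q)$, this yields $\psi(0_\F)=\boldsymbol\lambda(\hat q)$.

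It then remains to pass to closures. Since a regular map is continuous in the Zariski topology, $\psi(\overline{A})\subseteq\overline{\psi(A)}$ for every subset $A\subseteq\F$. Taking $A=\F\setminus\{0_\F\}$ and using that $\F$ is infinite (so that $A$, being the complement of a finite set in the irreducible affine line, is Zariski-dense, i.e.\ $\overline{A}=\F$), I get $\psi(0_\F)\in\psi(\F)=\psi(\overline{A})\subseteq\overline{\psi(A)}$. But $\psi(A)=\{\boldsymbol\lambda g_t:\ t\in\F\setminus\{0_\F\}\}\subseteq O(\boldsymbol\lambda)$, so $\overline{\psi(A)}\subseteq\overline{O(\boldsymbol\lambda)}$ and therefore $\boldsymbol\lambda(\hat q)=\psi(0_\F)\in\overline{O(\boldsymbol\lambda)}$, as required. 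The parenthetical final claim is immediate: if $\cup_{r<0}S(\hat q,r)=\varnothing$ then there are no triples $(i,j,k)$ with $q_i+q_j-q_k<0$, so the hypothesis holds vacuously for every $\boldsymbol\lambda$.

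I expect the only genuinely delicate point to be the one the lemma is really about: over an arbitrary infinite field there is no metric in which one could literally ``let $t\to0$'', so the informal diagonal-limit argument must be replaced by the density-plus-continuity mechanism above. The function of the hypothesis on negative-degree coordinates is precisely to ensure that the curve $t\mapsto\boldsymbol\lambda g_t$ extends to an honest regular map at $t=0_\F$ (rather than merely a rational map with a pole there), which is what makes the substitution $t=0_\F$ meaningful. By contrast, verifying the coordinate formula for $\boldsymbol\lambda g_t$ and matching $\psi(0_\F)$ with $\boldsymbol\lambda(\hat q)$ is routine bookkeeping.
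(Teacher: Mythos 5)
Your argument is correct, and it reaches the conclusion by a genuinely different (more topological) route than the paper, even though both proofs are built on the same one-parameter family of diagonal matrices $g_t=\mathrm{diag}(t^{q_1},\ldots,t^{q_n})$ and, at bottom, on the same fact that a one-variable polynomial vanishing on the infinite set $\F^*$ vanishes identically. The paper works directly with the vanishing ideal of $O(\boldsymbol\lambda)$: it decomposes each $h$ in that ideal by $\hat q$-auxiliary degree as $h=h_0+\cdots+h_m$ (Remark~\ref{RemarqAuxDegreeDecomp}), computes $\ev_{\boldsymbol\lambda(\tau)}(h_l)=\tau^l\ev_{\boldsymbol\lambda}(h_l)$, concludes that all these coefficients vanish, and then identifies the constant term with $\ev_{\boldsymbol\lambda(\hat q)}(h)$ by a case analysis on monomials. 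You package the same mechanism as: the curve $t\mapsto\boldsymbol\lambda g_t$ extends to a regular map $\psi:\F\to\F^{n^3}$ (this is exactly where the hypothesis on the positions in $\cup_{r<0}S(\hat q,r)$ is used), $\psi(0_{\F})=\boldsymbol\lambda(\hat q)$ by a single coordinatewise check, and then $f(\overline{A})\subseteq\overline{f(A)}$ for continuous $f$ together with the density of $\F^*$ in the affine line finishes the proof. What your version buys is brevity and transparency: it dispenses with the $\qadeg$ apparatus entirely and shifts the bookkeeping from arbitrary polynomials $h$ in the vanishing ideal to the $n^3$ coordinates of $\psi$, where it is trivial. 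What the paper's version buys is, first, self-containedness at the level of first principles (it uses only the definition of the Zariski closure via vanishing polynomials, never general facts about continuity and dense subsets of $\mathbb A^1$), and second, a by-product your argument does not directly yield: the proof shows that each auxiliary-degree-homogeneous component $h_l$ of an element of the vanishing ideal again lies in the ideal, which is what the authors invoke in Example~\ref{ExampleHomPolynAndDegenToIdeal}(i) to conclude that the vanishing ideal of any orbit is generated by homogeneous polynomials. If you wanted that corollary too, you would have to supplement your proof with essentially the paper's coefficient computation.
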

\begin{proof}
Let $\hat q=(q_i)_{i=1}^n\in\mathbb Z^n$ and suppose the hypothesis of the lemma is satisfied for $\boldsymbol\lambda=(\lambda_{ijk})\in\F^{n^3}$
(that is, $\lambda_{ijk}=0_{\F}$ whenever $(i,j,k)\in\cup_{r<0}S(\hat q,r)$).
Temporarily fix $\tau\in\F^*$ %\!\!\setminus\!\!\{0_{\F}\}$
and let $\boldsymbol\lambda(\tau)=(\lambda_{ijk}(\tau))\in\F^{n^3}$ where $\lambda_{ijk}(\tau)=\tau^{q_i+q_j-q_k}\lambda_{ijk}$.
(As usual, we take $\tau^0=1_{\F}$ and for $m\in\mathbb Z$, $m<0$, we take $\tau^m=(\tau^{-1})^{-m}$ with $\tau^{-1}$ being the (unique) multiplicative inverse of $\tau$ in $\F^*$, where $\F^*$ denotes the set of invertible elements of~$\F$.) %\!\setminus\!\{0_{\F}\}$.)

Observe that if $\boldsymbol\lambda$ is the structure vector of the algebra $\mathfrak{g}\in\AnF$ relative to the ordered $\F$-basis $(e_1,\ldots,e_n)$ of $V$,
then $\boldsymbol\lambda(\tau)$ is the structure vector of $\mathfrak{g}$ relative to a new basis $(e_1(\tau),\ldots,e_n(\tau))$ of $V$ defined by
$e_i(\tau)=\tau^{q_i}e_i$ for $1\le i\le n$.
It follows that $\boldsymbol\lambda(\tau)\in O(\boldsymbol\lambda)$.
It is also easy to see that for each $r\in\mathbb Z$, we have $\lambda_{ijk}(\tau)=\tau^r\lambda_{ijk}$ if, and only if, $(i,j,k)\in S(\hat q,r)$.
Consequently, our hypothesis that $\lambda_{ijk}=0_{\F}$ whenever $(i,j,k)\in\cup_{r<0}S(\hat q,r)$ ensures that $\lambda_{ijk}(\tau)=\lambda_{ijk}$
whenever $(i,j,k)\in\cup_{r\le0}S(\hat q,r)$.

Now let $h$ be any nonzero polynomial in the vanishing ideal of $O(\boldsymbol\lambda)$ and consider the decomposition $h=h_1+\ldots+h_m$ with $h_m\ne0$
described in Remark~\ref{RemarqAuxDegreeDecomp}.
On setting $\alpha_l=\ev_{\boldsymbol\lambda}(h_l)\in\F$ for $0\le l\le m$ we get, in view of the discussion above, that $\ev_{\boldsymbol\lambda(\tau)}(h_l)=\tau^l\ev_{\boldsymbol\lambda}(h_l)=\tau^l\alpha_l$
for $0\le l\le m$.
Invoking the fact that $\boldsymbol\lambda(\tau)\in O(\boldsymbol\lambda)$ we get that
$0_{\F}=\ev_{\boldsymbol\lambda(\tau)}(h)=\ev_{\boldsymbol\lambda(\tau)}(h_0)+\ev_{\boldsymbol\lambda(\tau)}(h_1)+\ldots +\ev_{\boldsymbol\lambda(\tau)}(h_m) =\alpha_0+\alpha_1\tau+\ldots+\alpha_m\tau^m$.
This last equality is true independent of the original choice of $\tau\in\F^*$ %\!\setminus\!\{0_{\F}\}$
so the fact that $\F$ is an infinite field ensures that
$\alpha_0=\alpha_1=\ldots=\alpha_m=0_{\F}$.
Next observe that it is immediate from the definition of $\boldsymbol\lambda(\hat q)$ and the hypothesis in the statement of the lemma we have assumed, that
$\ev_{\boldsymbol\lambda(\hat q)}(h_l)=0_{\F}$ for $1\le l\le m$ and that $\ev_{\boldsymbol\lambda(\hat q)}(h_0)=\ev_{\boldsymbol\lambda}(h_0)$.
{%\it\small [Explanation:
[To see this last point there are two cases to consider:
(i) a nonzero monomial $f$ occurring in $h_0$ contains an indeterminate $X_{ijk}$ with $(i,j,k)\in \cup_{r<0}S(\hat q,r)$
%(it may contain from $\cup_{r>0}S(\hat q,r)$ but this does not effect value in going from $\boldsymbol\lambda$ to $\boldsymbol\lambda(\hat q)$)
and
(ii) a nonzero monomial $f$ occurring in $h_0$ does not contain an indeterminate $X_{ijk}$ with $(i,j,k)\in \cup_{r<0}S(\hat q,r)$ in which case it also does not contain $X_{ijk}$
with $(i,j,k)\in \cup_{r>0}S(\hat q,r)$.
Then clearly $\ev_{\boldsymbol\lambda(\hat q)}f=\ev_{\boldsymbol\lambda}f$ in both cases.]
}

We conclude that $\ev_{\boldsymbol\lambda(\hat q)}(h)=\ev_{\boldsymbol\lambda(\hat q)}(h_0)=\ev_{\boldsymbol\lambda}(h_0)=\alpha_0=0_{\F}$ and this is enough to complete the proof
that $\boldsymbol\lambda(\hat q)\in\overline{O(\boldsymbol\lambda)}$.
\end{proof}

\begin{example}\label{ExampleHomPolynAndDegenToIdeal}
(i)
Let $\hat q=(q_i)_{i=1}^n\in\mathbb Z^n$ where $q_i=1$, $1\le i\le n$.
We have seen in Example~\ref{ExampleQAuxDeg}(i) that $\cup_{r<0}S(\hat q,r)=\varnothing$.
Moreover, $\boldsymbol\lambda(\hat q)={\bf0}$ (the structure vector of the abelian Lie algebra~$\mathfrak{a}_n$) for any $\boldsymbol\lambda\in\F^{n^3}$.
It is now immediate from Lemma~\ref{LemmaTechnique*} that any ($n$-dimensional) algebra $\mathfrak g\in\AnF$ degenerates to $\mathfrak a_n$
(a well-known result).
It also follows from the proof of Lemma~\ref{LemmaTechnique*} (with the choice of $\hat q$ we have made here) that the vanishing ideal of $O(\boldsymbol\lambda)$ is generated by homogeneous polynomials for any $\boldsymbol\lambda\in\F^{n^3}$.

(ii)
Suppose $\mathfrak{g}\in\AnF$ has an $m$-dimensional ideal $\mathfrak{g}_1$ for some $m$ with $1\le m<n$.
{\small\it
%(Define ideal $\mathfrak{g}_1$ as an $\F$-subspace with the property that $[x,y]\in\mathfrak{g}_1$, $[y,x]\in\mathfrak{g}_1$ $\forall x\in\mathfrak{g}_1$, $y\in\mathfrak{g}_1$.)
}
Pick a basis $(e_1,\ldots,e_n)$ for $V$ by completing a basis $(e_1,\ldots,e_m)$ of the underlying space for $\mathfrak{g}_1$ and let $\boldsymbol\lambda$ be the structure vector of $\mathfrak{g}$ relative to $(e_1,\ldots,e_n)$.
Also let $\hat q=(q_i)_{i=1}^n\in\mathbb Z^n$ with $q_i=0$ for $1\le i\le m$ and $q_i=1$ for $m+1\le i\le n$.
Comparing with Example~\ref{ExampleQAuxDeg}(ii) we see that with this choice of $\boldsymbol\lambda$ and $\hat q$ the hypothesis of Lemma~\ref{LemmaTechnique*} is satisfied.
Also observe that $\lambda_{rst}=0_{\F}$ whenever $(r,s,t)\in \{(i,j,k)\in T:$ $1\le i\le m$ and $m+1\le j, k\le n\}\cup
\{(i,j,k)\in T:$ $1\le j\le m$ and $m+1\le i, k\le n\}$ since $\mathfrak{g}_1$ is an ideal.
We conclude that $\boldsymbol\lambda(\hat q)\in\overline{O(\boldsymbol\lambda)}$.
But $\boldsymbol\lambda(\hat q)$ is a structure vector for $\mathfrak{g_2}\in\AnF$, where $\mathfrak{g_2}$ is isomorphic to $\mathfrak{g}_1\oplus\mathfrak{a}_{n-m}$ (algebra direct sum).
Later on we will see that this result is not true in general if we only assume that $\mathfrak{g}_1$ is a subalgebra of $\mathfrak{g}$.
\end{example}

\subsection{Some necessary conditions}\label{SubsectionNecessaryCond}

Various authors (see for example~\cite{GrunewaldOHalloran1988a,NesterenkoPopovych2006,Rakhimov2005,Seeley1990,Seeley1991}) have considered necessary conditions for degenerations within special classes of algebras.
In this subsection we consider some of these conditions in the general case of algebras over an arbitrary field.

\begin{definition}\label{DefLeftAnnih}%(see~\cite[Section~1.5]{Erdman2006})
Let $\mathfrak{g}=(V, [,])\in\AnF$.
Define the left annihilator of~$\mathfrak{g}$ by
$
\ann_L\mathfrak{g}=\{c\in V:\ [c,a]=0_{V} \mbox{ for all } a\in V\}.
$
Similarly we can define the right annihilator $\ann_R\mathfrak{g}$.
The two-sided annihilator of $\mathfrak{g}$ is defined by $\ann\mathfrak{g}=\ann_L\mathfrak{g}\cap\ann_R\mathfrak{g}$.
\end{definition}

Note that $\ann_L\mathfrak{g}$, $\ann_R\mathfrak{g}$ and $\ann\mathfrak{g}$ are all $\F$-subspaces of~$V$.
In fact $\ann\mathfrak{g}$ is an ideal of $\mathfrak{g}$.
Also observe that if $\mathfrak{g}\in\SknFa$ then $\ann_L\mathfrak{g}=\ann\mathfrak{g}=\ann_R\mathfrak{g}$.
In particular, $\ann_L\mathfrak{g}=Z(\mathfrak{g})$, the center of~$\mathfrak{g}$, when $\mathfrak{g}\in\LnFa$.

\begin{remark}\label{RemarkDimAnnDimZ<=n-2}
If $\mathfrak{g}=(V,[,]_{\mathfrak{g}})\in\SknFa$ then $\dim_{\F}(\ann\mathfrak{g})\ne n-1$.
(If $\dim_{\F} \ann (\mathfrak{g})=n-1$ we can then complete a basis $(e_i)_{i=1}^{n-1}$ of $\ann\mathfrak{g}$ to a basis $(e_i)_{i=1}^{n}$ of $V$.
But then $e_n\in \ann\mathfrak{g}$ since $[e_n,e_i]_{\mathfrak{g}}=0_V=[e_i,e_n]_{\mathfrak{g}}$ for all $1\le i\le n$, which is a contradiction.)
In particular, whenever $\mathfrak{g}\in\LnFa$ with $\mathfrak{g}\ne\mathfrak{a}_n$ then $\dim_{\F}Z(\mathfrak{g})\le n-2$.
\end{remark}

\begin{result}\label{ResultMinorsRank} (see~\cite[Theorem 19.11, page~153]{Curtis1984})
Let $r,s,t\in\mathbb Z$ with $r\ge1$, $s\ge1$ and $t\ge0$.
Also let $\tilde a\in\F^{r\times s}$.
Then $\rank \tilde a\le t$ if, and only if, all $k$-rowed minor determinants of $\tilde a$ are zero for $k>t$.
\end{result}

\begin{definition}\label{DefStructConstMatrix}
Let $\boldsymbol\lambda=(\lambda_{ijk})\in\F^{n^3}$.
%Define $\tilde a(\boldsymbol\lambda)\in\F^{n\times n^2}$ by $\tilde a(\boldsymbol\lambda)=(\alpha_{lm})_{1\le l\le n, 1\le m\le n^2}$, where $\alpha_{lm}=\lambda_{l,q_{m-1}+1,r_{m-1}+1}$.
We define the $n\times n^2$ matrices $\tilde a(\boldsymbol\lambda)=(\tilde\alpha_{lm})_{1\le l\le n, 1\le m\le n^2}$ and
$\tilde b(\boldsymbol\lambda)=(\tilde\beta_{lm})_{1\le l\le n, 1\le m\le n^2}$ in $\F^{n\times n^2}$ as follows:
The coefficient $\tilde\alpha_{lm}$ (resp., $\tilde\beta_{lm}$) is the entry in position $(l-1)n^2+m$ (resp., $(m-1)n+l$) of the ordered $n^3$-tuple $(\lambda_{ijk})$ (relative to the ordering we have fixed at the beginning).
\end{definition}

For example, for $n=3$ we have that
{\footnotesize
\[
\tilde a(\boldsymbol\lambda)=\left[
\begin{array}{lllllllll}
\lambda_{111} &\lambda_{112} &\lambda_{113} &\lambda_{121} &\lambda_{122} &\lambda_{123}&\lambda_{131} &\lambda_{132} &\lambda_{133}\\
\lambda_{211} &\lambda_{212} &\lambda_{213} &\lambda_{221} &\lambda_{222} &\lambda_{223}&\lambda_{231} &\lambda_{232} &\lambda_{233}\\
\lambda_{311} &\lambda_{312} &\lambda_{313} &\lambda_{321} &\lambda_{322} &\lambda_{323}&\lambda_{331} &\lambda_{332} &\lambda_{333}
\end{array}
\right].
\]
}
Note that $\boldsymbol\lambda=(\lambda_{ijk})$ can be recovered from $\tilde a(\boldsymbol\lambda)$ by placing its rows one next to the other (starting from the first row and ending with the last one).
Also, for $n=3$,
{\footnotesize
\[
\tilde b(\boldsymbol\lambda)=\left[
\begin{array}{lllllllll}
\lambda_{111} &\lambda_{121} &\lambda_{131} &\lambda_{211} &\lambda_{221} &\lambda_{231}&\lambda_{311} &\lambda_{321} &\lambda_{331}\\
\lambda_{112} &\lambda_{122} &\lambda_{132} &\lambda_{212} &\lambda_{222} &\lambda_{232}&\lambda_{312} &\lambda_{322} &\lambda_{332}\\
\lambda_{113} &\lambda_{123} &\lambda_{133} &\lambda_{213} &\lambda_{223} &\lambda_{233}&\lambda_{313} &\lambda_{323} &\lambda_{333}
\end{array}
\right].
\]
}
Note that the ``transpose'' of  $\boldsymbol\lambda$ can be recovered from $\tilde b(\boldsymbol\lambda)$ by placing the columns of $\tilde b(\boldsymbol\lambda)$ one below the other (starting from the first column and ending with the last one).

\begin{remark}\label{RemarkRankTildeA<=tIsClosed}
It is then immediate from Result~\ref{ResultMinorsRank} that the sets $\{\boldsymbol\lambda\in\F^{n^3}:\ \rank\tilde a(\boldsymbol\lambda)\le t\}$ and $\{\boldsymbol\lambda\in\F^{n^3}:\ \rank\tilde b(\boldsymbol\lambda)\le t\}$ are both closed subsets of~$\F^{n^3}$ in the Zariski topology for any nonnegative integer~$t$.
\end{remark}

For $\mathfrak{g}=(V,[,]_{\mathfrak{g}})\in\AnF$ we denote by $\mathfrak{g}^2$ the $\F$-subspace of $\mathfrak{g}$ spanned by all products of the form $[x,y]_{\mathfrak{g}}$ with $x,y\in V$. %Clearly, $\mathfrak{g}^2$ is an ideal of $\mathfrak{g}$.

\begin{lemma}\label{LemmaDimAnnDimg2}
Let $\mathfrak{g}=(V,[,])\in\AnF$ and let $\boldsymbol\lambda$ be any structure vector of $\mathfrak{g}$.
Then, \\
(i)
$\dim_{\F}(\ann_L\mathfrak{g})=n-\rank\tilde a(\boldsymbol\lambda)$
\\
(ii)
$\dim_{\F}\mathfrak{g}^2=\rank\tilde b(\boldsymbol\lambda)$
\end{lemma}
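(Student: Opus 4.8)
The plan is to fix an ordered basis $(e_1,\dots,e_n)$ of $V$ relative to which $\boldsymbol\lambda=(\lambda_{ijk})$ is the structure vector of $\mathfrak g$, so that $[e_i,e_j]=\sum_{k=1}^n\lambda_{ijk}e_k$. Both $\dim_{\F}(\ann_L\mathfrak g)$ and $\dim_{\F}\mathfrak g^2$ are invariants of $\mathfrak g$ that do not depend on the chosen basis, so it is enough to verify the two equalities using this particular basis (equivalently, this particular structure vector). Throughout I would identify $V$ with $\F^n$ by sending $\sum_i c_ie_i$ to its coordinate column $(c_1,\dots,c_n)$, so that $\dim$ of any subspace of $V$ equals the dimension of its image in $\F^n$.

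For part (i) I would characterize the left annihilator by a linear system. By bilinearity, $c=\sum_i c_ie_i$ lies in $\ann_L\mathfrak g$ if and only if $[c,e_j]=0_V$ for every $j$. Expanding gives $[c,e_j]=\sum_k\bigl(\sum_i c_i\lambda_{ijk}\bigr)e_k$, so the condition is exactly $\sum_{i=1}^n c_i\lambda_{ijk}=0_{\F}$ for all $1\le j,k\le n$. Matching the fixed ordering of the triples, the $(l,m)$-entry of $\tilde a(\boldsymbol\lambda)$ is $\lambda_{ljk}$ with $m=(j-1)n+k$, so this system says precisely that $(c_1,\dots,c_n)$ lies in the left null space of $\tilde a(\boldsymbol\lambda)$. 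Since the coordinate map is an isomorphism, $\dim_{\F}(\ann_L\mathfrak g)$ equals the dimension of that left null space, which by the rank--nullity theorem (applied to $\tilde a(\boldsymbol\lambda)^{\mathrm T}$) is $n-\rank\tilde a(\boldsymbol\lambda)$.

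For part (ii) I would read $\mathfrak g^2$ off as a column space. Again by bilinearity $\mathfrak g^2=\Fspan\{[e_i,e_j]:1\le i,j\le n\}$, and the coordinate column of $[e_i,e_j]$ is $(\lambda_{ij1},\dots,\lambda_{ijn})^{\mathrm T}$. From the definition of $\tilde b(\boldsymbol\lambda)$, its $(l,m)$-entry is $\lambda_{ijl}$ with $m=(i-1)n+j$; hence the column of $\tilde b(\boldsymbol\lambda)$ indexed by $m=(i-1)n+j$ is exactly the coordinate column of $[e_i,e_j]$. Therefore, under the coordinate isomorphism, $\mathfrak g^2$ corresponds to the column space of $\tilde b(\boldsymbol\lambda)$, giving $\dim_{\F}\mathfrak g^2=\rank\tilde b(\boldsymbol\lambda)$.

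The argument is almost entirely formal, and I expect the main (indeed only) delicate step to be the index bookkeeping: verifying that under the fixed ordering of the triples $(i,j,k)$ the row/column positions specified in Definition~\ref{DefStructConstMatrix} really do place $\lambda_{ljk}$ (respectively $\lambda_{ijl}$) where claimed, so that the left-annihilator equations coincide with the left null space of $\tilde a(\boldsymbol\lambda)$ and the products $[e_i,e_j]$ coincide with the columns of $\tilde b(\boldsymbol\lambda)$. Once this matching is checked, both equalities follow immediately from the rank--nullity theorem and the identification of rank with the dimension of the column space.
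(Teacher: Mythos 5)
Your proposal is correct and follows essentially the same route as the paper: part (i) is the paper's argument with the kernel of $\tilde x\mapsto\tilde x\tilde a(\boldsymbol\lambda)$ rephrased as the left null space of $\tilde a(\boldsymbol\lambda)$, and part (ii) is exactly the paper's observation that $\mathfrak g^2$ is spanned by the $[e_i,e_j]$, whose coordinate columns are the columns of $\tilde b(\boldsymbol\lambda)$. Your index bookkeeping for both matrices also matches Definition~\ref{DefStructConstMatrix}.
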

\begin{proof}
Assume the hypothesis.
Then there exists an $\F$-basis $(e_i)_{i=1}^n$ of $V$ relative to which $\boldsymbol\lambda$ is the structure vector of $\mathfrak{g}$.

(i)
Let $\Psi:\F^n\to\F^{n^2}:$ $\tilde x\mapsto\tilde x\tilde a(\boldsymbol\lambda)$, ($\tilde x\in\F^n$).
(Here we identify $\F^r$ with $\F^{1\times r}$.)
It is then easy to see that $\alpha_1e_1+\ldots+\alpha_ne_n\in\ann_L\mathfrak{g}$ ($\alpha_i\in\F$) if, and only if, $(\alpha_1,\ldots,\alpha_n)\in\ker\Psi$.
Moreover, $\dim_{\F}\im\Psi=\rank\tilde a(\boldsymbol\lambda)$ (since $\im\Psi$ is the $\F$-span of the rows of $\tilde a(\boldsymbol\lambda)$).
We conclude that $\dim_{\F}(\ann_L\mathfrak{g})=\dim_{\F}\ker\Psi=n-\rank\tilde a(\boldsymbol\lambda)$.

(ii)
This is immediate once we observe that $\mathfrak{g}^2$ is the $\F$-span of all the products of the form $[e_i,e_j]$ for $1\le i,j\le n$.
\end{proof}

\begin{lemma}\label{LemmaDimAnnIncreasesDimSquareDecreases}
Let $\mathfrak{g}, \mathfrak{g_1}\in\AnF$ and suppose that $\mathfrak{g}$ degenerates to $\mathfrak{g_1}$.
Then,\\
 $\dim_{\F}(\ann_L\mathfrak{g}_1)\ge\dim_{\F}(\ann_L\mathfrak{g})$ and $\dim_{\F}\mathfrak{g}_1^2\le\dim_{\F}\mathfrak{g}^2$.
\end{lemma}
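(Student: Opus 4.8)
The plan is to reduce the statement to the two rank inequalities furnished by the preceding lemmas. By the definition of degeneration, since $\mathfrak{g}$ degenerates to $\mathfrak{g}_1$, there exist structure vectors $\boldsymbol\lambda$ of $\mathfrak{g}$ and $\boldsymbol\mu$ of $\mathfrak{g}_1$ with $\boldsymbol\mu\in\overline{O(\boldsymbol\lambda)}$. First I would observe, using Lemma~\ref{LemmaDimAnnDimg2}, that the two quantities we wish to control are rank functions of the associated matrices: $\dim_{\F}(\ann_L\mathfrak{g})=n-\rank\tilde a(\boldsymbol\lambda)$ and $\dim_{\F}\mathfrak{g}^2=\rank\tilde b(\boldsymbol\lambda)$, with the analogous identities for $\mathfrak{g}_1$ in terms of $\boldsymbol\mu$. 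Thus it suffices to prove the single fact that rank cannot increase under passage to the orbit closure, namely $\rank\tilde a(\boldsymbol\mu)\le\rank\tilde a(\boldsymbol\lambda)$ and $\rank\tilde b(\boldsymbol\mu)\le\rank\tilde b(\boldsymbol\lambda)$.

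**The core argument via closed sets.** The key is that the locus where the rank is bounded above by a fixed integer is Zariski-closed. Concretely, set $t=\rank\tilde a(\boldsymbol\lambda)$ and consider the set $C=\{\boldsymbol\nu\in\F^{n^3}:\ \rank\tilde a(\boldsymbol\nu)\le t\}$, which is closed in the Zariski topology by Remark~\ref{RemarkRankTildeA<=tIsClosed} (itself a consequence of Result~\ref{ResultMinorsRank}, since $C$ is cut out by the vanishing of all $(t+1)$-rowed minors of $\tilde a(\boldsymbol\nu)$). Now $\boldsymbol\lambda\in C$ by construction; moreover, I would argue that the whole orbit $O(\boldsymbol\lambda)$ lies in $C$. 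This is because isomorphic algebras have the same left-annihilator dimension, hence the same value of $\rank\tilde a$ by Lemma~\ref{LemmaDimAnnDimg2}(i); equivalently, every element of $O(\boldsymbol\lambda)$ is a structure vector of $\mathfrak{g}$, and the rank invariant is constant on the orbit. Since $C$ is closed and contains $O(\boldsymbol\lambda)$, it contains $\overline{O(\boldsymbol\lambda)}$, and therefore $\boldsymbol\mu\in C$, giving $\rank\tilde a(\boldsymbol\mu)\le t=\rank\tilde a(\boldsymbol\lambda)$. The identical argument applied to $\tilde b$ with $t'=\rank\tilde b(\boldsymbol\lambda)$ yields $\rank\tilde b(\boldsymbol\mu)\le\rank\tilde b(\boldsymbol\lambda)$.

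**Concluding the inequalities.** Combining these rank inequalities with the dimension formulas gives the result directly. For the annihilator, $\dim_{\F}(\ann_L\mathfrak{g}_1)=n-\rank\tilde a(\boldsymbol\mu)\ge n-\rank\tilde a(\boldsymbol\lambda)=\dim_{\F}(\ann_L\mathfrak{g})$, where the inequality reverses precisely because $\rank\tilde a$ enters with a minus sign. For the square, $\dim_{\F}\mathfrak{g}_1^2=\rank\tilde b(\boldsymbol\mu)\le\rank\tilde b(\boldsymbol\lambda)=\dim_{\F}\mathfrak{g}^2$, which is immediate. One small point I would be careful about is the invariance of $\rank\tilde a$ and $\rank\tilde b$ along the orbit: this should be spelled out, either by the isomorphism-invariance of $\dim_{\F}\ann_L$ and $\dim_{\F}\mathfrak{g}^2$ together with Lemma~\ref{LemmaDimAnnDimg2}, or by noting that any $\boldsymbol\nu\in O(\boldsymbol\lambda)$ is itself a structure vector of $\mathfrak{g}$ and applying the lemma with $\boldsymbol\nu$ in place of $\boldsymbol\lambda$.

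**Main obstacle.** I expect no serious obstacle, since the heavy lifting is already packaged in Remark~\ref{RemarkRankTildeA<=tIsClosed} and Lemma~\ref{LemmaDimAnnDimg2}; the only genuinely load-bearing observation is that the rank-bounded locus is closed \emph{and} a union of orbits, so that it engulfs the entire orbit closure. The subtlety worth stating carefully is the passage from ``$C$ contains the orbit'' to ``$C$ contains the orbit closure,'' which uses nothing more than that a closed set containing a subset contains its closure.
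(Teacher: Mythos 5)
Your proposal is correct and follows essentially the same route as the paper's proof: translate both quantities into ranks of $\tilde a$ and $\tilde b$ via Lemma~\ref{LemmaDimAnnDimg2}, observe that the rank-bounded loci are Zariski-closed unions of orbits (Remark~\ref{RemarkRankTildeA<=tIsClosed} together with the orbit-invariance of the rank), and conclude that they contain the orbit closure. The point you flag as needing care --- constancy of the ranks along the orbit --- is handled in the paper exactly as you suggest, by noting via Lemma~\ref{LemmaDimAnnDimg2} that the sets in question are unions of orbits.
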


\begin{proof}
Assume the hypothesis and let $\boldsymbol\lambda$ and $\boldsymbol\nu$ be structure vectors of $\mathfrak{g}$ and $\mathfrak{g_1}$ respectively.
Also let $S_1(\boldsymbol\lambda)=\{\boldsymbol\mu\in\F^{n^3}:\ \rank\tilde a(\boldsymbol\mu)\le\rank\tilde a(\boldsymbol\lambda)\}$ and
$S_2(\boldsymbol\lambda)=\{\boldsymbol\mu\in\F^{n^3}:\ \rank\tilde a(\boldsymbol\mu)\le\rank\tilde b(\boldsymbol\lambda)\}$.
In the argument below $S(\boldsymbol\lambda)$ can be any one of $S_1(\boldsymbol\lambda)$, $S_2(\boldsymbol\lambda)$.
It is clear from Lemma~\ref{LemmaDimAnnDimg2} that $S(\boldsymbol\lambda)$ is a union of orbits, in particular $O(\boldsymbol\lambda)\subseteq S(\boldsymbol\lambda)$.
Moreover, the set $S(\boldsymbol\lambda)$ is Zariski-closed (see Remark~\ref{RemarkRankTildeA<=tIsClosed}).
It follows that $\overline{O(\boldsymbol\lambda)}\subseteq S(\boldsymbol\lambda)$ and hence $\boldsymbol\nu\in S(\boldsymbol\lambda)$ since we have assumed that $\boldsymbol\nu\in \overline{O(\boldsymbol\lambda)}$.
Hence $\rank\tilde a(\boldsymbol\nu)\le\rank\tilde a(\boldsymbol\lambda)$ and $\rank\tilde b(\boldsymbol\nu)\le\rank\tilde b(\boldsymbol\lambda)$.
%We conclude that $\dim_{\F}(\ann_L\mathfrak{g}_1)\ge\dim_{\F}(\ann_L\mathfrak{g})$ and $\dim_{\F}\mathfrak{g}_1^2\le\dim_{\F}\mathfrak{g}^2$.
Invoking Lemma~\ref{LemmaDimAnnDimg2} again we get the desired result.
\end{proof}

\section{Orbit closures in $\SknF$ consisting of precisely two orbits}\label{SectionLevel1}

We continue with our assumption that $n\ge3$ and $\F$ is an arbitrary infinite field.

\medskip

In this section we aim at providing a proof of our main Theorem~\ref{TheoremMain}.
This is achieved via a sequence of lemmas.
We also discuss how this theorem can be used in order to obtain information about the composition series of the $\F G$-module $\SknF$.
\medskip

First we introduce the algebra structures $\mathfrak{r}_n$ and $\mathfrak{h}_n\in\LnFa$.

\vspace{-1.5ex}
\begin{definition}\label{DefHeisenbergRemA}
Let $\boldsymbol\rho=(\rho_{ijk})$ and $\boldsymbol\eta=(\eta_{ijk})\in\F^{n^3}$ be such that the only nonzero components of $\boldsymbol\rho$ are
$\rho_{ini}=1_{\F}=-\rho_{nii}$ for $1\le i\le n-1$ and the only nonzero components of $\boldsymbol\eta$ are $\eta_{123}=1_{\F}=-\eta_{213}$.
It is easy to observe that $\boldsymbol\rho,\boldsymbol\eta\in\LnF$.
Now define $\mathfrak{r}_n,\mathfrak{h}_n\in\LnFa$ by $\mathfrak{r}_n=\Theta^{{}^{-1}}(\boldsymbol\rho)$ and $\mathfrak{h}_n=\Theta^{{}^{-1}}(\boldsymbol\eta)$ with $\Theta$ as in Remark~\ref{RemarkAnFFn3AreVectorSpaces}.
\end{definition}

We have chosen the simpler notation $\boldsymbol\rho$, $\boldsymbol\eta$ instead of the more precise $\boldsymbol\rho_n$, $\boldsymbol\eta_n$ as $n$ will not vary within our arguments (we work within a fixed $n$ with $n\ge3$).

\begin{remark}\label{RemarkHeisenbergRemA}
Keeping the notation of the previous definition, we see that $\boldsymbol\rho$ is the structure vector of the algebra $\mathfrak{r}_n=(V,[,]_{\mathfrak{r}_n})\in\LnFa$ relative to the basis $(v_i^*)_{i=1}^n$ of $V$ we have fixed, where the only nonzero products between elements of this basis are $[v_i^*,v_n^*]_{\mathfrak{r}_n}=v_i^*=-[v_n^*,v_i^*]_{\mathfrak{r}_n}$ ($1\le i\le n-1$).

Moreover, $\boldsymbol\eta$ is the structure vector, again relative to the basis $(v_i^*)_{i=1}^n$ of $V$, for the algebra $\mathfrak{h}_n=(V,[,]_{\mathfrak{h}_n})\in\LnFa$, where the nonzero products between the elements of this basis are $[v_1^*,v_2^*]_{\mathfrak{h}_n}=v_3^*=-[v_2^*,v_1^*]_{\mathfrak{h}_n}$.
Note that $\mathfrak h_3$ is isomorphic to the Heisenberg algebra and that $\mathfrak{h}_n$ is isomorphic to the Lie algebra direct sum $\mathfrak h_3\oplus\mathfrak{a}_{n-3}$.
\end{remark}

We denote by $\Fspan(x_1,\ldots,x_n)$ the set of $\F$-linear combinations of the elements $x_1,\ldots,x_n$ of $V$.

\begin{definition}\label{DefCond*}
Let $\mathfrak{g}=(V,[,]_{\mathfrak{g}})\in\AnF$.\\
(i) We say that $\mathfrak{g}$ satisfies condition $(*)$ if $[x,y]_{\mathfrak{g}}\in\Fspan(x,y)$ for all $x,y\in V$.\\
(ii) We say that $\mathfrak{g}$ satisfies condition $(**)$ if $[x,x]_{\mathfrak{g}}\in\Fspan(x)$ for all $x\in V$.
\end{definition}

It is then immediate that every $\mathfrak{g}\in\SknFa$ satisfies condition $(**)$.
Moreover, $\mathfrak{g}\in\AnF$ satisfies condition $(**)$ whenever $\mathfrak{g}$ satisfies condition $(*)$ but the converse is not true in general.

It is also clear from the above definition that if $\mathfrak{g},\mathfrak{g}_1\in\AnF$ are isomorphic and in addition $\mathfrak{g}$ satisfies condition $(*)$, then $\mathfrak{g}_1$ also satisfies condition $(*)$.
Hence, the subset $\{\boldsymbol\lambda\in\F^{n^3}:\ \boldsymbol\lambda$ is a structure vector for some $\mathfrak{g}\in\AnF$ that satisfies condition $(*)\}$ of $\F^{n^3}$ is a union of orbits.

Finally we remark that $\mathfrak{h}_n$ does not satisfy condition $(*)$ whereas $\mathfrak{a}_n$, the $n$-dimensional abelian Lie algebra, satisfies condition $(*)$.
Later on we will show that $\mathfrak{r}_n$ also satisfies condition $(*)$ and even more, that any algebra $\mathfrak{g}\in\SknFa$ which satisfies condition $(*)$ is in fact isomorphic to either $\mathfrak{a}_n$ or $\mathfrak{r}_n$.

\bigskip
We first focus attention on algebras not satisfying condition $(*)$.
In the proof of the following lemma we use ideas from the proofs of~\cite[Theorem~5.2]{Lauret2003} and~\cite[Proposition~2.2]{Khudoyberdiyev&Omirov2013}.

\begin{lemma}\label{LemmaNot(*)DegeneratesToHeis}
Let $\mathfrak{g}=(V,[,]_{\mathfrak{g}})\in\AnF$ and suppose that $\mathfrak{g}$ satisfies condition~$(**)$.
Suppose further that $\mathfrak{g}$ does not satisfy condition~$(*)$.
Then $\mathfrak{g}$ degenerates to $\mathfrak h_n$.
(In particular, the hypothesis of the lemma is satisfied by any $\mathfrak{g}\in\SknFa$ which does not satisfy condition~$(*)$.)
\end{lemma}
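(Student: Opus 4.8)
The goal is to show that if $\mathfrak{g}=(V,[,]_{\mathfrak{g}})$ satisfies $(**)$ but fails $(*)$, then $\mathfrak{g}$ degenerates to $\mathfrak{h}_n$. The strategy is to exhibit a concrete basis of $V$ relative to which the structure vector $\boldsymbol\lambda$ of $\mathfrak{g}$ satisfies the hypothesis of Lemma~\ref{LemmaTechnique*} for a suitable $\hat q$, and such that the resulting $\boldsymbol\lambda(\hat q)$ is a structure vector for $\mathfrak{h}_n$ (or for something isomorphic to it). Since degeneration depends only on isomorphism classes (Remark~\ref{RemarkDegenAlg=OrbitClosStructVectorsDegenIsomClasses}), producing any such basis suffices.

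\textbf{Finding the right basis.}
Because $\mathfrak{g}$ fails condition $(*)$, there exist $u,w\in V$ with $[u,w]_{\mathfrak{g}}\notin\Fspan(u,w)$. The first step is to use condition $(**)$ to pin down the structure of $[u,w]_{\mathfrak{g}}$: applying $(**)$ to $u$, $w$, and $u+w$ and subtracting shows that $[u,w]_{\mathfrak{g}}+[w,u]_{\mathfrak{g}}\in\Fspan(u,w)$, so the ``symmetric part'' is harmless and the obstruction to $(*)$ lives entirely in a direction transverse to $\Fspan(u,w)$. I would then set $e_1=u$, $e_2=w$, and choose $e_3$ to be a vector in the $\Fspan(u,w)$-complement of $[u,w]_{\mathfrak{g}}$, i.e. arranged so that modulo $\Fspan(e_1,e_2)$ the product $[e_1,e_2]_{\mathfrak{g}}$ is a nonzero multiple of $e_3$; rescaling $e_3$ I can make $[e_1,e_2]_{\mathfrak{g}}\equiv e_3$. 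Completing $e_1,e_2,e_3$ to a basis $(e_1,\ldots,e_n)$ of $V$ yields a structure vector $\boldsymbol\lambda=(\lambda_{ijk})$ with $\lambda_{123}=1_{\F}$ (after the normalization) and with $[e_2,e_1]_{\mathfrak{g}}$ having $e_3$-coefficient $-1_{\F}$ by the $(**)$-forced skew-symmetry modulo $\Fspan(e_1,e_2)$.

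\textbf{Choosing $\hat q$ and applying Lemma~\ref{LemmaTechnique*}.}
The target $\mathfrak{h}_n$ has structure vector $\boldsymbol\eta$ with $\eta_{123}=1_{\F}=-\eta_{213}$ as its only nonzero entries. So I want a weight $\hat q$ that isolates exactly the triples $(1,2,3)$ and $(2,1,3)$ at auxiliary degree $0$ while pushing every other monomial to strictly positive $\hat q$-auxiliary degree, and which satisfies the sign hypothesis $\lambda_{ijk}=0$ whenever $(i,j,k)\in\cup_{r<0}S(\hat q,r)$. A natural candidate is a weight assigning a large value to $q_3$ and smaller equal values to $q_1,q_2$, along the lines of Example~\ref{ExampleQAuxDeg}(iii) but scaled so that $q_1+q_2-q_3=0$; one then checks that the only triples $(i,j,k)$ with $q_i+q_j-q_k\le 0$ are $(1,2,3)$, $(2,1,3)$ together with triples where $\lambda_{ijk}$ is forced to vanish. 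The delicate part is verifying that all the remaining (generically nonzero) structure constants of $\mathfrak{g}$ occupy triples in $\cup_{r>0}S(\hat q,r)$, and that no genuinely nonzero $\lambda_{ijk}$ sits in $\cup_{r<0}S(\hat q,r)$ — this is exactly where condition $(**)$ must be used again (it forces the diagonal constants $\lambda_{iik}$ to lie in $\Fspan(e_i)$, constraining which triples can carry mass). If the hypothesis of Lemma~\ref{LemmaTechnique*} is met, the lemma gives $\boldsymbol\lambda(\hat q)\in\overline{O(\boldsymbol\lambda)}$, and by construction $\boldsymbol\lambda(\hat q)$ retains only the $(1,2,3)$ and $(2,1,3)$ entries, hence is a structure vector for $\mathfrak{h}_n$.

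\textbf{The main obstacle.}
The hard part is choosing $\hat q$ so that the sign hypothesis of Lemma~\ref{LemmaTechnique*} holds \emph{without} being able to simply zero out the unwanted constants by hand: a poorly chosen basis-completion may leave some $\lambda_{ijk}$ nonzero on a triple with $q_i+q_j-q_k<0$, which would invalidate the lemma. The resolution is to exploit the freedom in completing the basis and to lean on condition $(**)$ to control the diagonal ($i=j$) products; one may need to first quotient by or split off a suitable subspace so that the ``bad'' negative-weight triples are automatically free of mass. I expect this bookkeeping — matching the weight vector $\hat q$ against the support of $\boldsymbol\lambda$ forced by $(**)$ — to be the crux of the argument, with everything downstream following formally from Lemma~\ref{LemmaTechnique*}.
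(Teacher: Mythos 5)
Your proposal is correct and follows essentially the same route as the paper: take $e_1=x$, $e_2=y$, $e_3=[x,y]_{\mathfrak g}$ (or your mod-$\Fspan(e_1,e_2)$ variant), complete to a basis, and apply Lemma~\ref{LemmaTechnique*} with $\hat q=(1,1,2,\ldots,2)$ as in Example~\ref{ExampleQAuxDeg}(iii). The ``main obstacle'' you flag is in fact vacuous: since $q_i+q_j-q_k\ge 1+1-2=0$ for every triple, $\cup_{r<0}S(\hat q,r)=\varnothing$ and the sign hypothesis of Lemma~\ref{LemmaTechnique*} holds automatically, so the only thing to verify is that the weight-zero entries $\lambda_{ijk}$ (those with $i,j\in\{1,2\}$ and $k\ge3$) other than $\lambda_{123}=1_{\F}$ and $\lambda_{213}=-1_{\F}$ vanish --- which follows from condition $(**)$ and the choice of $e_3$ exactly as you indicate, with no need to quotient or split off a subspace.
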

\begin{proof}
Assume the hypothesis.
Then there exist elements $x,y\in V$ such that $[x,y]\not\in\Fspan( x,y)$.
Observe that the set $\{x,y,[x,y]_{\mathfrak{g}}\}$ is $\F$-linearly independent.
To see this we also need to invoke the fact that $\mathfrak{g}$ satisfies condition~$(**)$ so, in particular, $y$ cannot be a scalar multiple of $x$
(otherwise we would have that $[x,y]_{\mathfrak{g}}\in\Fspan(x)\subseteq\Fspan(x,y)$).
We can thus complete this set to a basis $e_1=x$, $e_2=y$, $e_3=[x,y]_{\mathfrak{g}}$, $e_4,\ldots,e_n$ of $V$
(recall we assume $n\ge3$).
Let $\boldsymbol\lambda=(\lambda_{ijk})\in\F^{n^3}$ be the structure vector of $\mathfrak{g}$ relative to $(e_i)_{i=1}^n$.
Then $\lambda_{123}=1_{\F}$ and $\lambda_{12k}=0_{\F}$ for all $k>3$.
Moreover, $\lambda_{213}=-1_{\F}$ and $\lambda_{21k}=0_{\F}$ for all $k>3$ (this follows from the fact that $[e_1,e_2]_{\mathfrak{g}}+[e_2,e_1]_{\mathfrak{g}}\in\Fspan(e_1,e_2)$ in view of the hypothesis that $\mathfrak{g}$ satisfies condition~$(**)$).
Finally note that $\lambda_{11k}=\lambda_{22k}=0_{\F}$ for all $k\ge3$.

We want to use Lemma~\ref{LemmaTechnique*} in order to complete the proof,
so let $\hat q=(q_i)_{i=1}^n\in\mathbb Z^n$ where $q_i=1$ for $1\le i\le 2$ and $q_i=2$ for $i\ge 3$.
Comparing with Example~\ref{ExampleQAuxDeg}(iii), we see that $\boldsymbol\lambda(\hat q)=\boldsymbol\eta$ and, moreover, the hypothesis of Lemma~\ref{LemmaTechnique*} is satisfied.
We conclude that $\boldsymbol\eta\in\overline{O(\boldsymbol\lambda)}$.
\end{proof}

Next we introduce some more subsets of $\F^{n^3}$.
Let $P=\{\boldsymbol\lambda=(\lambda_{ijk})_{1\le i,j,k\le n}\in \SknF:\ \lambda_{ijk}=0_{\F}$ whenever $k\not\in\{i,j\}$ and $\lambda_{iji}=\lambda_{kjk}$ whenever $j\not\in\{i,k\}\}$.
We also denote by $\boldsymbol\rho(\F G)$ and $\boldsymbol\eta(\F G)$ the $\F G$-submodules of $\F^{n^3}$ generated by $\boldsymbol\rho$ and $\boldsymbol\eta$ respectively.

It is easy to see that $P$ is an $\F$-subspace of $\SknF$ (and also an algebraic subset of $\F^{n^3}$).
To compute $\dim_{\F}P$, first observe that at most $2n(n-1)$ of the `positions' $(i,j,k)\in T$ (with $T$ as in Definition~\ref{DefTDegF}) can afford coefficients $\lambda_{ijk}$ which can possibly be nonzero.
This is because for each fixed $j$ only the coefficients $\lambda_{iji}$ and $\lambda_{ijj}$ (with $i\ne j$) can possibly be nonzero.
Now the condition $\lambda_{iji}=\lambda_{kjk}$ whenever $j\not\in\{i,k\}$ forces $\lambda_{i1i}=\alpha_1$ ($i\ne1$), $\lambda_{i2i}=\alpha_2$ ($i\ne2$), \ldots,
$\lambda_{ini}=\alpha_n$ ($i\ne n$) for some $\alpha_1,\ldots,\alpha_n\in\F$.
Invoking the fact that $\lambda_{ijj}=-\lambda_{jij}=-\alpha_i$ for $j\ne i$, we conclude that $\dim_{\F} P=n$.

\begin{lemma}\label{LemmaPIsOrbitClosureOfMu'}
$P=O(\boldsymbol\rho)\cup\{{\bf0}\}=\overline{O(\boldsymbol\rho)}=\boldsymbol\rho(\F G)$.
\end{lemma}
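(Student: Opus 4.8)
My plan is to first extract from the two coordinate conditions defining $P$ an intrinsic description of the multiplication of $\mathfrak g=\Theta^{-1}(\boldsymbol\mu)$ for $\boldsymbol\mu=(\mu_{ijk})\in P$, and then to read off all four sets from it. Writing $e_i=v_i^*$, the condition $\mu_{ijk}=0_\F$ for $k\notin\{i,j\}$ gives $[e_i,e_j]_{\mathfrak g}=\mu_{iji}e_i+\mu_{ijj}e_j$. Since $n\ge3$, the condition $\mu_{iji}=\mu_{kjk}$ (for $j\notin\{i,k\}$) lets me set $\alpha_j=\mu_{iji}$, independent of the choice of $i\ne j$, and skew-symmetry gives $\mu_{ijj}=-\mu_{jij}=-\alpha_i$, so $[e_i,e_j]_{\mathfrak g}=\alpha_j e_i-\alpha_i e_j$. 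Letting $\phi\in V^*$ be the functional with $\phi(e_k)=\alpha_k$, a routine bilinear expansion (collecting the coefficient of each $e_m$) then yields the single identity $[x,y]_{\mathfrak g}=\phi(y)\,x-\phi(x)\,y$ for all $x,y\in V$. This is the technical core of the argument, and everything below is a consequence of it.

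From this identity $\mathfrak g$ manifestly satisfies condition $(*)$; conversely, evaluating $(*)$ on the pairs $(e_i,e_j)$ and $(e_i+e_k,e_j)$ reproduces precisely the two defining conditions of $P$, so $\boldsymbol\mu\in P$ exactly when $\Theta^{-1}(\boldsymbol\mu)\in\SknFa$ satisfies condition $(*)$. As $\SknFa$-membership and condition $(*)$ are isomorphism invariants, $P$ is a union of orbits; combined with the direct check $\boldsymbol\rho\in P$ and the fact that $P$ is an $\F$-subspace, this gives $\boldsymbol\rho(\F G)\subseteq P$. For the reverse inclusion I would classify the algebras arising: if $\phi=0$ then $[x,y]_{\mathfrak g}=0_V$ for all $x,y$, so $\boldsymbol\mu=\mathbf{0}$; if $\phi\ne0$, then choosing a basis $f_1,\dots,f_{n-1}$ of $\ker\phi$ together with $f_n$ satisfying $\phi(f_n)=1_\F$ produces the relations $[f_i,f_n]_{\mathfrak g}=f_i$ for $1\le i\le n-1$, all other products vanishing, whence $\mathfrak g\cong\mathfrak r_n$ and $\boldsymbol\mu\in O(\boldsymbol\rho)$. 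Thus $P\subseteq O(\boldsymbol\rho)\cup\{\mathbf{0}\}\subseteq\boldsymbol\rho(\F G)\subseteq P$, which already gives $P=O(\boldsymbol\rho)\cup\{\mathbf{0}\}=\boldsymbol\rho(\F G)$.

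Finally I would splice $\overline{O(\boldsymbol\rho)}$ into the chain. On one hand $\mathbf{0}\in\overline{O(\boldsymbol\rho)}$ since every algebra degenerates to $\mathfrak a_n$ (Example~\ref{ExampleHomPolynAndDegenToIdeal}(i)), giving $O(\boldsymbol\rho)\cup\{\mathbf{0}\}\subseteq\overline{O(\boldsymbol\rho)}$; on the other hand $P$ is Zariski-closed (being an $\F$-subspace of $\F^{n^3}$) and contains $O(\boldsymbol\rho)$, so $\overline{O(\boldsymbol\rho)}\subseteq\overline{P}=P$. Together with the previous paragraph this forces $\overline{O(\boldsymbol\rho)}=O(\boldsymbol\rho)\cup\{\mathbf{0}\}=P=\boldsymbol\rho(\F G)$, as required. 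I expect the main obstacle to be the first paragraph, namely converting the somewhat opaque coordinate conditions defining $P$ into the clean intrinsic formula $[x,y]_{\mathfrak g}=\phi(y)x-\phi(x)y$; once this is in hand, the classification into $\mathfrak a_n$ and $\mathfrak r_n$, the $G$-invariance of $P$, and the closure statement are all routine.
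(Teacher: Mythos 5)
Your proof is correct, and it runs the key computation in the opposite direction from the paper's. The paper starts from $\boldsymbol\rho$ and computes the structure vector of $\mathfrak{r}_n$ relative to an arbitrary basis $(u_i)_{i=1}^n$ with transition matrix $(\beta_{ij})$, obtaining $[u_i,u_j]_{\mathfrak{r}_n}=\beta_{nj}u_i-\beta_{ni}u_j$; matching the family swept out by the (arbitrary nonzero) last row $(\beta_{n1},\ldots,\beta_{nn})$ against the parametrization of $P$ by $(\alpha_1,\ldots,\alpha_n)\in\F^n$ worked out in the discussion preceding the lemma gives $P=O(\boldsymbol\rho)\cup\{{\bf0}\}$ directly. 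You instead start from an arbitrary $\boldsymbol\mu\in P$, repackage that same parametrization as the intrinsic identity $[x,y]=\phi(y)x-\phi(x)y$ for a functional $\phi\in V^*$, and then classify: $\phi=0$ gives $\mathfrak{a}_n$, while $\phi\ne0$ gives $\mathfrak{r}_n$ via a basis adapted to $\ker\phi$. The two arguments use the same linear algebra and are of comparable length; what your version buys is the coordinate-free description of the algebras whose structure vectors lie in $P$, which makes it immediate that $P$ is exactly the set of structure vectors of algebras in $\SknFa$ satisfying condition~$(*)$ --- i.e.\ you get Corollary~\ref{CorolPsatisfiesCond*}(ii) for free, whereas the paper deduces it from the lemma afterwards by a separate computation. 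The remaining steps ($P$ is a subspace, hence Zariski-closed and an $\F G$-submodule once known to be a union of orbits, and ${\bf0}\in\overline{O(\boldsymbol\rho)}$ from Example~\ref{ExampleHomPolynAndDegenToIdeal}(i)) are handled identically in both proofs.
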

\begin{proof}
We keep the notation about $\boldsymbol\rho$ and $\mathfrak{r}_n$ we have fixed in Remark~\ref{RemarkHeisenbergRemA}.
In particular, we have that $[v_i^*,v_n^*]_{\mathfrak{r}_n}=v_i^*=-[v_n^*,v_i^*]_{\mathfrak{r}_n}$
($1\le i\le n-1$) are the only nonzero products between the elements of our basis $(v_i^*)_{i=1}^n$.
Consider now the new basis $(u_i)_{i=1}^n$ of $V$ where $u_i=\sum_{j=1}^n\beta_{ji}v_j^*$
(with $\beta_{ij}$ for $1\le i,j\le n$ being the $(i,j)$-component of an invertible $n\times n$ matrix over $\F$).
We then have, for $i\ne j$, that $[u_i,u_j]_{\mathfrak{r}_n}=\beta_{nj}u_i-\beta_{ni}u_j$ (and $[u_i,u_i]_{\mathfrak{r}_n}={\bf0}_{\F}$).
On setting $\alpha_i=\beta_{ni}$ for $i=1,\ldots,n$
(note that at least one of the $\alpha_i$ is nonzero since the matrix $(\beta_{ij})$ is invertible)
and comparing with the discussion immediately before the lemma we get that $P=O(\boldsymbol\rho)\cup\{{\bf0}\}$.
Invoking now the fact that $P$ is an $\F$-subspace of $\SknF$, we get that $P$ is in fact an $\F G$-submodule of $\F^{n^3}$, from which the equality $P=\boldsymbol\rho(\F G)$ follows easily.
Finally, combining the fact that $P$ is Zariski-closed with the fact that ${\bf0}\in\overline{O(\boldsymbol\rho)}$ we get that $P=\overline{O(\boldsymbol\rho)}$.
\end{proof}

We remark in passing that if $\mathfrak{s}$ is the subalgebra of $\mathfrak{r}_n$ having $\Fspan(v_1^*,v_n^*)$ as its underlying vector space, then a consequence of Lemma~\ref{LemmaPIsOrbitClosureOfMu'} is that there is no degeneration from $\mathfrak{r}_n$ to $\mathfrak{s}\oplus\mathfrak{a}_{n-2}$ (Lie algebra direct sum).
However, this is not a counterexample to the result in Example~\ref{ExampleHomPolynAndDegenToIdeal}(ii) as $\mathfrak{s}$ in fact is not an ideal of $\mathfrak{r}_n$.

\begin{corollary}\label{CorolPsatisfiesCond*}
(i)
$\mathfrak{r}_n$ satisfies condition~$(*)$.

(ii)
Let $\boldsymbol\lambda\in\F^{n^3}$.
Then $\boldsymbol\lambda$ belongs to $P$ if, and only if, $\boldsymbol\lambda$ is a structure vector for some algebra $\mathfrak{g}\in\SknFa$ which satisfies
condition~$(*)$.
\end{corollary}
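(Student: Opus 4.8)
The plan is to treat the two parts separately, proving (i) by a direct computation and then (ii) by combining Lemma~\ref{LemmaPIsOrbitClosureOfMu'} with part~(i) for one direction and an elementary coefficient comparison for the other.

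For part~(i), I would compute $[x,y]_{\mathfrak{r}_n}$ explicitly. Writing $x=\sum_{i=1}^n a_iv_i^*$ and $y=\sum_{i=1}^n b_iv_i^*$ in the fixed basis and using that the only nonzero products are $[v_i^*,v_n^*]_{\mathfrak{r}_n}=v_i^*=-[v_n^*,v_i^*]_{\mathfrak{r}_n}$ for $1\le i\le n-1$, one obtains $[x,y]_{\mathfrak{r}_n}=\sum_{i=1}^{n-1}(a_ib_n-a_nb_i)v_i^*$, which is exactly $b_nx-a_ny$. Hence $[x,y]_{\mathfrak{r}_n}\in\Fspan(x,y)$ for all $x,y\in V$, so $\mathfrak{r}_n$ satisfies condition~$(*)$. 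This step is routine and has no characteristic obstructions.

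For the forward (``only if'') implication of~(ii), I would invoke Lemma~\ref{LemmaPIsOrbitClosureOfMu'}, which gives $P=O(\boldsymbol\rho)\cup\{{\bf0}\}$. If $\boldsymbol\lambda={\bf0}$, then $\boldsymbol\lambda$ is a structure vector of $\mathfrak{a}_n\in\SknFa$, which satisfies condition~$(*)$. Otherwise $\boldsymbol\lambda\in O(\boldsymbol\rho)$, so $\boldsymbol\lambda$ is a structure vector of $\Theta^{-1}(\boldsymbol\rho)=\mathfrak{r}_n$ (every element of an orbit is a structure vector of the corresponding algebra), and $\mathfrak{r}_n\in\LnFa\subseteq\SknFa$ satisfies condition~$(*)$ by part~(i). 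In both cases the required $\mathfrak{g}\in\SknFa$ exists.

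For the reverse (``if'') implication, suppose $\boldsymbol\lambda=(\lambda_{ijk})$ is the structure vector of some $\mathfrak{g}\in\SknFa$ satisfying $(*)$, relative to a basis $(e_i)_{i=1}^n$ of $V$. Skew-symmetry of $\mathfrak{g}$ gives $\boldsymbol\lambda\in\SknF$, since the identities $\lambda_{iik}=0_{\F}$ and $\lambda_{ijk}+\lambda_{jik}=0_{\F}$ hold relative to any basis. Applying $(*)$ with $x=e_i$, $y=e_j$ forces $[e_i,e_j]_{\mathfrak{g}}\in\Fspan(e_i,e_j)$, whence $\lambda_{ijk}=0_{\F}$ whenever $k\notin\{i,j\}$. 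The key step is to establish $\lambda_{iji}=\lambda_{kjk}$ for distinct $i,j,k$ (here $n\ge3$ is used): applying $(*)$ to $x=e_i+e_k$ and $y=e_j$ and expanding by bilinearity yields $[e_i+e_k,e_j]_{\mathfrak{g}}=\lambda_{iji}e_i+\lambda_{kjk}e_k+(\lambda_{ijj}+\lambda_{kjj})e_j$, which must lie in $\Fspan(e_i+e_k,e_j)$; comparing the coefficients of the distinct basis vectors $e_i$ and $e_k$ forces $\lambda_{iji}=\lambda_{kjk}$. Taken together, these three facts show $\boldsymbol\lambda\in P$. I expect this final coefficient comparison to be the only genuinely nontrivial point, the remainder being bookkeeping.
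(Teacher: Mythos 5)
Your proposal is correct and follows essentially the same route as the paper: part (ii) uses the identical coefficient comparison with $x=e_i+e_k$, $y=e_j$ for the ``if'' direction and Lemma~\ref{LemmaPIsOrbitClosureOfMu'} together with part~(i) for the ``only if'' direction. The only cosmetic difference is in part~(i), where you compute $[x,y]_{\mathfrak{r}_n}=b_nx-a_ny$ directly in the fixed basis instead of reusing the arbitrary-basis computation from the proof of Lemma~\ref{LemmaPIsOrbitClosureOfMu'} with a case split on whether $y\in\Fspan(x)$; both amount to the same bilinear expansion.
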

\begin{proof}
(i) Let $x,y\in V$.
Keeping the notation we have fixed in Remark~\ref{RemarkHeisenbergRemA} we see that if $y\in\Fspan(x)$ then $[x,y]_{\mathfrak{r}_n}=0_V\,(\in\Fspan(x,y))$ since $\mathfrak{r}_n\in\SknFa$.
If $y\not\in\Fspan(x)$ then we can complete $\{x,y\}$ to a basis of $V$.
But we have seen in the proof of the previous lemma that for any choice of basis $(u_i')_{i=1}^n$ of $V$ we have $[u_i',u_j']_{\mathfrak{r}_n}\in\Fspan(u_i',u_j')$ for all $i,j$.
It follows that $[x,y]_{\mathfrak{r}_n}\in\Fspan(x,y)$.

(ii) For the `if' part.
Suppose $\boldsymbol\lambda=(\lambda_{ijk})\in\F^{n^3}$ is the structure vector, relative to the basis $(e_i)_{i=1}^n$ of $V$,  of $\mathfrak{g}=(V,[,])\in\SknFa$, where $\mathfrak{g}$ satisfies condition~$(*)$.
It follows from the properties we have assumed for $\mathfrak{g}$ that $\lambda_{iik}=0_{\F}$ and $\lambda_{ijk}=-\lambda_{jik}$ for all $i,j,k$ and that $\lambda_{ijk}=0_{\F}$ whenever $k\not\in\{i,j\}$.
Now for $j\not\in\{k,i\}$ (with $k\ne i$) we have $[e_i,e_j]=\lambda_{iji}e_i+\lambda_{ijj}e_j$ and $[e_k,e_j]=\lambda_{kjk}e_k+\lambda_{kjj}e_j$.
This gives $[(e_i+e_k),e_j]=[e_i,e_j]+[e_k,e_j]=\lambda_{iji}e_i+\lambda_{kjk}e_k+(\lambda_{ijj}+\lambda_{kjj})e_j$.
But from hypothesis $[(e_i+e_k),e_j]\in\Fspan(e_i+e_k,e_j)$.
This forces $\lambda_{iji}=\lambda_{kjk}$ for all $i,j,k$ such that $j\not\in\{k,i\}$ since $e_i$, $e_j$, $e_k$ are three distinct elements of the above basis.
We conclude that $\boldsymbol\lambda\in P$.

The `only if' part is immediate from item~(i) of this corollary and Lemma~\ref{LemmaPIsOrbitClosureOfMu'} since both $\mathfrak{r}_n$ and $\mathfrak{a}_n$ belong to $\LnFa\subseteq\SknFa$.
\end{proof}

Since the algebras $\mathfrak{r}_n$, $\mathfrak{h}_n$ are not isomorphic, we get that $P\cap O(\boldsymbol\eta)=\varnothing$ in view of Lemma~\ref{LemmaPIsOrbitClosureOfMu'}.
Moreover, combining Lemmas~\ref{LemmaNot(*)DegeneratesToHeis}, \ref{LemmaPIsOrbitClosureOfMu'} and Corollary~\ref{CorolPsatisfiesCond*} with the fact that ${\bf0}\in\overline{O(\boldsymbol\lambda)}$ for any $\boldsymbol\lambda\in\F^{n^3}$ we get

\vspace{-2ex}
\begin{corollary}\label{NotPDegeneratesToMu''OtherwiseLev>1}
Let $\boldsymbol\lambda\in\SknF\setminus P$.
Then,

(i)
$\boldsymbol\eta\in\overline{O(\boldsymbol\lambda)}$.

(ii)
If, in addition, $\boldsymbol\lambda\not\in O(\boldsymbol\eta)$, then $\overline{O(\boldsymbol\lambda)}$ contains at least 3 distinct orbits.
%(In particular, it cannot be `level 1' --- see how we define `level 1' because we do not have `local closure' of orbits when $\F$ is not algebraically closed.)
\end{corollary}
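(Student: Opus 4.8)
The plan is to read the statement off from results already in hand, treating it as an assembly of Lemmas~\ref{LemmaNot(*)DegeneratesToHeis}, \ref{LemmaPIsOrbitClosureOfMu'} and~\ref{LemmaUnionOfOrbitsTransitivity}. For part~(i) the first step is to translate the set-theoretic hypothesis $\boldsymbol\lambda\in\SknF\setminus P$ into algebraic language: by Corollary~\ref{CorolPsatisfiesCond*}(ii) the vector $\boldsymbol\lambda$ is a structure vector of some $\mathfrak{g}\in\SknFa$, and the fact that $\boldsymbol\lambda\notin P$ says precisely that $\mathfrak{g}$ fails to satisfy condition~$(*)$. Since every element of $\SknFa$ automatically satisfies condition~$(**)$, the hypotheses of Lemma~\ref{LemmaNot(*)DegeneratesToHeis} hold for $\mathfrak{g}$, so $\mathfrak{g}$ degenerates to $\mathfrak{h}_n$. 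Recalling that $\boldsymbol\eta$ is a structure vector of $\mathfrak{h}_n$, Remark~\ref{RemarkDegenAlg=OrbitClosStructVectorsDegenIsomClasses}(i) then yields $\boldsymbol\eta\in\overline{O(\boldsymbol\lambda)}$, which is exactly~(i).

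For part~(ii) I would exhibit three concrete orbits lying inside $\overline{O(\boldsymbol\lambda)}$ and check that they are pairwise distinct. The natural candidates are $O(\boldsymbol\lambda)$ itself, $O(\boldsymbol\eta)$, and the singleton orbit $\{{\bf0}\}$. That all three lie in $\overline{O(\boldsymbol\lambda)}$ is immediate: $O(\boldsymbol\lambda)\subseteq\overline{O(\boldsymbol\lambda)}$ trivially; $O(\boldsymbol\eta)\subseteq\overline{O(\boldsymbol\lambda)}$ follows from part~(i) together with Lemma~\ref{LemmaUnionOfOrbitsTransitivity}; and ${\bf0}\in\overline{O(\boldsymbol\lambda)}$ is the well-known degeneration of any algebra to $\mathfrak{a}_n$ recorded in Example~\ref{ExampleHomPolynAndDegenToIdeal}(i). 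It then remains to separate them: $O(\boldsymbol\lambda)\ne O(\boldsymbol\eta)$ is precisely the extra hypothesis $\boldsymbol\lambda\notin O(\boldsymbol\eta)$; the inequality $O(\boldsymbol\eta)\ne\{{\bf0}\}$ holds because $\boldsymbol\eta\ne{\bf0}$ (its component $\eta_{123}=1_{\F}$ is nonzero); and $O(\boldsymbol\lambda)\ne\{{\bf0}\}$ holds because ${\bf0}\in P$ while $\boldsymbol\lambda\notin P$, so $\boldsymbol\lambda\ne{\bf0}$. Hence the three orbits are distinct and $\overline{O(\boldsymbol\lambda)}$ contains at least three of them.

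Since each step is a direct invocation of a previously established result, I do not anticipate a genuine obstacle; the only point requiring care is the bookkeeping in part~(ii), namely confirming that the three orbits really are pairwise distinct rather than accidentally coinciding. In particular one must not overlook that the separation $O(\boldsymbol\lambda)\ne\{{\bf0}\}$ rests on the containment ${\bf0}\in P$ (so that $\boldsymbol\lambda\notin P$ forces $\boldsymbol\lambda$ nonzero), and that the separation $O(\boldsymbol\lambda)\ne O(\boldsymbol\eta)$ is supplied entirely by the additional hypothesis and would fail without it. Indeed $\boldsymbol\eta$ itself lies in $\SknF\setminus P$ (as $\mathfrak{h}_n$ does not satisfy condition~$(*)$), which is exactly why the clause $\boldsymbol\lambda\notin O(\boldsymbol\eta)$ cannot be dropped: for $\boldsymbol\lambda=\boldsymbol\eta$ the closure $\overline{O(\boldsymbol\eta)}$ need contain only the two orbits $O(\boldsymbol\eta)$ and $\{{\bf0}\}$.
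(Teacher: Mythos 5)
Your proposal is correct and follows exactly the route the paper intends: part~(i) is Corollary~\ref{CorolPsatisfiesCond*}(ii) feeding into Lemma~\ref{LemmaNot(*)DegeneratesToHeis}, and part~(ii) separates the three orbits $O(\boldsymbol\lambda)$, $O(\boldsymbol\eta)$, $\{{\bf0}\}$ using Lemma~\ref{LemmaPIsOrbitClosureOfMu'} (via ${\bf0}\in P$) and the fact that ${\bf0}\in\overline{O(\boldsymbol\lambda)}$, which is precisely the combination of results the paper cites. Your bookkeeping of the pairwise distinctness, which the paper leaves implicit, is accurate.
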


In the following remark we collect some facts about the algebra $\mathfrak{h}_n$ we will use later.

\vspace{-2ex}
\begin{remark}\label{RemarkHeisenbergPropertiesDimZ<=n-2}
(i)
$\dim_{\F} Z(\mathfrak{h}_n)=n-2$.
(Explanation:
Comparing with the discussion in Remark~\ref{RemarkHeisenbergRemA} we easily see that  $\{v_3^*,\ldots,v_n^*\}\subseteq Z(\mathfrak{h}_n)$,
so $\Fspan(v_3^*,\ldots,v_n^*)\subseteq Z(\mathfrak{h}_n)$.
In order to conclude that $\dim_{\F} Z(\mathfrak{h}_n)=n-2$, it is enough to show that $Z(\mathfrak{h}_n)\subseteq\Fspan(v_3^*,\ldots,v_n^*)$.
But if $z=\alpha_1v_1^*+\ldots+\alpha_nv_n^*\in Z(\mathfrak{h}_n)$ ($\alpha_i\in\F$), then $\alpha_1v_3^*=[z,v_2^*]_{\mathfrak{h}_n}=0_V=[v_1^*,z]_{\mathfrak{h}_n}=\alpha_2v_3^*$ which forces $\alpha_1=0_{\F}=\alpha_2$.)

(ii)
$\mathfrak{h}_n\in \BnFa$.
To see this, note that for $x_1,x_2\in V$, $[x_1,x_2]_{\mathfrak{h}_n}=\alpha v_3^*$ for some $\alpha\in\F$ which in turn shows that $[x_1,x_2]_{\mathfrak{h}_n}\in Z(\mathfrak{h}_n)$.
(Alternatively, we can observe that $\sum_l\eta_{ijl}\eta_{lkm}=0_{\F}$ for $1\le i,j,k,m\le n$ which in turn shows that $\boldsymbol\eta\in\BnF$.)
\end{remark}

\begin{lemma}\label{LemmadimZ=n-2Heisenberg}
Let $\mathfrak{g}=(V,[,])\in\SknFa\cap\BnFa$ with $\dim_{\F} (\ann\mathfrak{g})=n-2$.
Then $\mathfrak{g}\cong\mathfrak{h}_n$.
\end{lemma}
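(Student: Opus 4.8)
The plan is to produce an explicit $\F$-basis of $V$ relative to which $\mathfrak{g}$ has structure vector exactly $\boldsymbol\eta$, and then to invoke Remark~\ref{RemarkIsomorphicAlgStructureVectors}(ii) to conclude $\mathfrak{g}\cong\mathfrak{h}_n$. First I would exploit the two hypotheses to control all products. Since $\mathfrak{g}\in\BnFa$, every product $[x_1,x_2]_{\mathfrak{g}}$ satisfies $[[x_1,x_2]_{\mathfrak{g}},x_3]_{\mathfrak{g}}=0_V$ for all $x_3\in V$, so $\mathfrak{g}^2\subseteq\ann_L\mathfrak{g}=\ann\mathfrak{g}$ (the equality holding because $\mathfrak{g}\in\SknFa$). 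Write $Z=\ann\mathfrak{g}$, so $\dim_{\F}Z=n-2$ and $\dim_{\F}(V/Z)=2$. Because every element of $Z$ annihilates $V$ on both sides, the bracket depends only on classes modulo $Z$: if $x'=x+z$ with $z\in Z$, then $[x',y]_{\mathfrak{g}}=[x,y]_{\mathfrak{g}}$, and similarly in the second slot. Hence the skew-symmetric bilinear map $[,]_{\mathfrak{g}}$ factors through $(V/Z)\times(V/Z)$.

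Next I would choose $e_1,e_2\in V$ whose images form a basis of the $2$-dimensional space $V/Z$. Skew-symmetry together with the factoring through $V/Z$ shows that every product in $\mathfrak{g}$ is a scalar multiple of the single element $w:=[e_1,e_2]_{\mathfrak{g}}$; consequently $\mathfrak{g}^2=\Fspan(w)$. If $w=0_V$ then $\mathfrak{g}$ would be abelian and $\ann\mathfrak{g}=V$, contradicting $\dim_{\F}Z=n-2<n$; thus $w\ne0_V$, $\dim_{\F}\mathfrak{g}^2=1$, and $w\in\mathfrak{g}^2\subseteq Z$.

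Now I would build the desired basis. Since $w\ne0_V$ lies in $Z$, extend $\{w\}$ to a basis $w=f_3,f_4,\ldots,f_n$ of $Z$ (this is $n-2$ vectors, as required), and set $f_1=e_1$, $f_2=e_2$. As $e_1,e_2$ lift a basis of $V/Z$ while $f_3,\ldots,f_n$ span $Z$, the tuple $(f_1,\ldots,f_n)$ is an $\F$-basis of $V$. Reading off the products relative to this basis gives $[f_1,f_2]_{\mathfrak{g}}=w=f_3=-[f_2,f_1]_{\mathfrak{g}}$, while $[f_i,f_j]_{\mathfrak{g}}=0_V$ whenever $i\ge3$ or $j\ge3$ (since then one of $f_i,f_j$ lies in $Z=\ann\mathfrak{g}$) and $[f_1,f_1]_{\mathfrak{g}}=[f_2,f_2]_{\mathfrak{g}}=0_V$ by skew-symmetry. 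Comparing with Definition~\ref{DefHeisenbergRemA} and Remark~\ref{RemarkHeisenbergRemA}, the structure vector of $\mathfrak{g}$ relative to $(f_1,\ldots,f_n)$ is precisely $\boldsymbol\eta$; thus $\mathfrak{g}$ and $\mathfrak{h}_n$ share a common structure vector, and Remark~\ref{RemarkIsomorphicAlgStructureVectors}(ii) yields $\mathfrak{g}\cong\mathfrak{h}_n$.

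There is no deep obstacle here; the argument is an organized unwinding of the two defining conditions. The one point requiring care is the reduction modulo $Z$: I must verify that the bracket genuinely descends to $V/Z$ (which uses $\ann_L\mathfrak{g}=\ann_R\mathfrak{g}$, valid in $\SknFa$) and that $w$ can be incorporated into a basis of $Z$, which is exactly where the metabelian hypothesis $\mathfrak{g}^2\subseteq Z$ is used. The bound $\dim_{\F}\mathfrak{g}^2\le1$ forced by $\dim_{\F}(V/Z)=2$ is what pins the algebra down to the single Heisenberg-type relation.
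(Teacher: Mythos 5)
Your proposal is correct and follows essentially the same route as the paper: both arguments pick two vectors spanning a complement of $\ann\mathfrak{g}$, use the metabelian and skew-symmetry hypotheses to show their bracket is a nonzero element of $\ann\mathfrak{g}$, and complete it to a basis of $\ann\mathfrak{g}$ so that the resulting basis of $V$ realizes the structure vector $\boldsymbol\eta$. Your intermediate observations (the bracket descending to $V/\ann\mathfrak{g}$ and $\dim_{\F}\mathfrak{g}^2=1$) are a slightly more explicit packaging of the same computation.
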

\begin{proof}
Assume the hypothesis.
By extending a basis of $\ann\mathfrak{g}$ to a basis of $V$ we get an $\F$-subspace decomposition
$V=\Fspan(b_1,b_2)\oplus \ann\mathfrak{g}$.
(In particular, the elements $b_1,b_2$ of $V$ do not belong to $\ann\mathfrak{g}$, which also gives that $[b_1,b_2]\ne0_V$.)
From the hypothesis $[b_1,b_2]\in \ann\mathfrak{g}$ since $[[b_1,b_2],x]=0_V=[x,[b_1,b_2]]$ for all $x\in V$. %in view of the fact that $\mathfrak{g}\in\BnFa$.
By completing $\{[b_1,b_2]\}$ to a basis of $\ann\mathfrak{g}$ we obtain a basis of $V$ of the form $(e_i)_{i=1}^{n}$,
where $e_1=b_1$, $e_2=b_2$, $e_3=[b_1,b_2]$ and $\ann\mathfrak{g}=\Fspan(e_3,\ldots,e_n)$.
(Clearly $[b_1,b_2]\not\in\Fspan(b_1,b_2)$ since $[b_1,b_2]\in\ann\mathfrak{g}\setminus \{0_V\}$.)
Finally, invoking the fact that $g\in\SknFa$ we get that the structure vector of $\mathfrak{g}$ relative to the basis $(e_i)_{i=1}^{n}$ is precisely $\boldsymbol\eta$ which is enough to complete the proof.
\end{proof}

\begin{lemma}\label{LemmaHeisenbergIsLevel1}
The only proper degeneration of $\mathfrak{h}_n$ is to the abelian Lie algebra $\mathfrak{a}_n$.
In other words, $\overline{O(\boldsymbol\eta)}=O(\boldsymbol\eta)\cup\{{\bf0}\}$.
\end{lemma}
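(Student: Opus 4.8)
The plan is to prove the nontrivial inclusion $\overline{O(\boldsymbol\eta)}\subseteq O(\boldsymbol\eta)\cup\{{\bf0}\}$, the reverse inclusion being immediate: $O(\boldsymbol\eta)\subseteq\overline{O(\boldsymbol\eta)}$ trivially, and ${\bf0}\in\overline{O(\boldsymbol\eta)}$ since every algebra degenerates to $\mathfrak{a}_n$ (cf.\ Example~\ref{ExampleHomPolynAndDegenToIdeal}(i)). So I would fix an arbitrary $\boldsymbol\nu\in\overline{O(\boldsymbol\eta)}$ and let $\mathfrak{g}_1\in\AnF$ be an algebra structure having $\boldsymbol\nu$ as a structure vector; the goal is then to show $\mathfrak{g}_1\cong\mathfrak{h}_n$ or $\mathfrak{g}_1\cong\mathfrak{a}_n$, that is, $\boldsymbol\nu\in O(\boldsymbol\eta)\cup\{{\bf0}\}$.

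First I would localize the closure inside a good ambient set. Since $\mathfrak{h}_n\in\SknFa\cap\BnFa$ (Remark~\ref{RemarkHeisenbergPropertiesDimZ<=n-2}(ii)) we have $\boldsymbol\eta\in\SknF\cap\BnF$, and both $\SknF$ and $\BnF$ are unions of orbits which are also Zariski-closed (Remark~\ref{RemarkZarClosedSets}). Hence the whole orbit $O(\boldsymbol\eta)$ lies in the closed set $\SknF\cap\BnF$, and therefore so does $\overline{O(\boldsymbol\eta)}$. In particular $\boldsymbol\nu\in\SknF\cap\BnF$, i.e.\ $\mathfrak{g}_1\in\SknFa\cap\BnFa$; this is exactly what makes the hypotheses of the classification lemma available.

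Next I would extract a numerical obstruction from the degeneration. Because $\mathfrak{h}_n$ degenerates to $\mathfrak{g}_1$, Lemma~\ref{LemmaDimAnnIncreasesDimSquareDecreases} yields $\dim_{\F}(\ann_L\mathfrak{g}_1)\ge\dim_{\F}(\ann_L\mathfrak{h}_n)$. As both algebras lie in $\SknFa$ we have $\ann_L=\ann$, and $\dim_{\F}(\ann\mathfrak{h}_n)=\dim_{\F}Z(\mathfrak{h}_n)=n-2$ by Remark~\ref{RemarkHeisenbergPropertiesDimZ<=n-2}(i). Thus $\dim_{\F}(\ann\mathfrak{g}_1)\ge n-2$. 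On the other hand, Remark~\ref{RemarkDimAnnDimZ<=n-2} rules out the value $n-1$ for any member of $\SknFa$, so $\dim_{\F}(\ann\mathfrak{g}_1)\in\{n-2,\,n\}$.

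Finally I would split into these two cases. If $\dim_{\F}(\ann\mathfrak{g}_1)=n$ then $\ann\mathfrak{g}_1=V$, every product vanishes, and $\mathfrak{g}_1=\mathfrak{a}_n$, giving $\boldsymbol\nu={\bf0}$. If $\dim_{\F}(\ann\mathfrak{g}_1)=n-2$, then since $\mathfrak{g}_1\in\SknFa\cap\BnFa$ I can invoke Lemma~\ref{LemmadimZ=n-2Heisenberg} directly to conclude $\mathfrak{g}_1\cong\mathfrak{h}_n$, whence $\boldsymbol\nu\in O(\boldsymbol\eta)$. In either case $\boldsymbol\nu\in O(\boldsymbol\eta)\cup\{{\bf0}\}$, completing the proof. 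I do not expect a serious obstacle: the argument is a clean assembly of already-established facts. The only point demanding care is the localization step, namely confirming that the closure genuinely stays inside $\SknF\cap\BnF$ so that both the skew-symmetry and the metabelian hypotheses of Lemma~\ref{LemmadimZ=n-2Heisenberg} apply; everything else reduces to the dimension pinch $n-2\le\dim_{\F}(\ann\mathfrak{g}_1)\ne n-1$.
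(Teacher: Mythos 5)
Your proof is correct and follows essentially the same route as the paper's: pin $\dim_{\F}(\ann\mathfrak{g}_1)$ to $n-2$ or $n$ via Lemma~\ref{LemmaDimAnnIncreasesDimSquareDecreases} and Remark~\ref{RemarkDimAnnDimZ<=n-2}, observe that the closure stays in the closed orbit-union $\SknF\cap\BnF$, and invoke Lemma~\ref{LemmadimZ=n-2Heisenberg}. The only (cosmetic) differences are that the paper phrases the argument as a contradiction and localizes in $\LnF\cap\BnF$ rather than $\SknF\cap\BnF$.
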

\begin{proof}
Let $\mathfrak{g}\in\AnF$, with $\mathfrak{g}\ne\mathfrak{a}_n$, be a proper degeneration of $\mathfrak{h}_n$.
Clearly, $\mathfrak{g}\in\LnFa$ since $\mathfrak{h}_n\in\LnFa$ and as we have seen, $\LnF$ is an algebraic subset of $\F^{n^3}$.
Our assumption $\mathfrak{g}\ne\mathfrak{a}_n$ ensures that $\dim_{\F} Z(\mathfrak{g})\le n-2$ (see Remark~\ref{RemarkDimAnnDimZ<=n-2}).
On the other hand, $\dim_{\F} Z(\mathfrak{g})\ge n-2$ in view of Lemma~\ref{LemmaDimAnnIncreasesDimSquareDecreases} and Remark~\ref{RemarkHeisenbergPropertiesDimZ<=n-2}(i) since there is a degeneration from $\mathfrak{h}_n$ to $\mathfrak{g}$.
We conclude that $\dim_{\F} Z(\mathfrak{g})=n-2$.
Moreover, we have $\mathfrak{g}\in\BnFa$ since $\mathfrak{h}_n\in\BnFa$ and $\BnF$ is also an algebraic subset of $\F^{n^3}$.
Invoking Lemma~\ref{LemmadimZ=n-2Heisenberg} we then get that $\mathfrak{g}\cong\mathfrak{h}_n$, a contradiction.
The desired result now follows from the fact that any algebra structure in~$\AnF$ degenerates to $\mathfrak{a}_n$ (see Example~\ref{ExampleHomPolynAndDegenToIdeal}(i)).
\end{proof}

We can now provide a proof of the first of our two main results.

{\bf Proof of Theorem~\ref{TheoremMain}.}
Combine Lemma~\ref{LemmaPIsOrbitClosureOfMu'}, Corollary~\ref{NotPDegeneratesToMu''OtherwiseLev>1} and Lemma~\ref{LemmaHeisenbergIsLevel1}.
{\hfill $\Box$\par}

\medskip
Finally, we remark that the fact that orbits are locally closed could be used to provide certain simplifications to the above proof.
However, we have chosen a presentation which is more elementary and self-contained.

\subsection{On the composition series of the $\F G$-module $\SknF$}\label{SubsectionCompositionSeries}

\begin{definition}\label{DefAdjMapUnimod}
Let $\mathfrak{g}=(V,[,]_{\mathfrak{g}})\in\AnF$.
Fix $x\in V$.
We define the adjoint map in $\mathfrak{g}$ (relative to $x$) by $\ad_x:V\to V:$ $y\mapsto[x,y]_{\mathfrak{g}}$, ($y\in V$).
Then $\ad_x$ is an $\F$-linear map.
We say that the algebra structure $\mathfrak{g}$ is unimodular if ${\rm trace}(\ad_x)=0_{\F}$ for each $x\in V$.
%Moreover, we say that $\boldsymbol\lambda\in\F^{n^3}$ is unimodular if $\boldsymbol\lambda$ is a structure vector for some unimodular algebra structure.
\end{definition}

\begin{remark}\label{RemarkUnimodularCriterium}
Suppose that $(u_1,\ldots, u_n)$ is an $\F$-basis of~$V$ and that $\mathfrak{g}\in\AnF$.
Observe that if $\boldsymbol\lambda=(\lambda_{ijk})\in\F^{n^3}$ is the structure vector of $\mathfrak{g}$ relative to $(u_1,\ldots,u_n)$,
then ${\rm trace}(\ad_{u_i})=\sum_{j=1}^n\lambda_{ijj}$.
Let $x=\alpha_1u_1+\ldots+\alpha_nu_n\in V$ ($\alpha_i\in\F$).
We then have that ${\rm trace}(\ad_x)=\sum_{i=1}^n\alpha_i{\rm trace}(\ad_{u_i})$.
It follows that $\mathfrak{g}$ is unimodular if, and only if, ${\rm trace}(\ad_{u_i})=0_{\F}$ for $1\le i\le n$, and this last condition holds true if, and only if, $\sum_{j=1}^n\lambda_{ijj}=0_{\F}$ for $1\le i\le n$.
\end{remark}

\begin{definition}\label{DefUnF}
Define $\UnF=\{\boldsymbol\lambda=(\lambda_{ijk})\in\SknF:$ $\sum_{j=1}^n\lambda_{ijj}=0_{\F}$ for $i=1,\ldots,n\}$.
%(In other words, $\UnF=\{\boldsymbol\lambda=(\lambda_{ijk})\in\SknF:$ $\boldsymbol\lambda$ is unimodular$\}$.)
\end{definition}

It is then immediate that $\UnF$ is an $\F$-subspace of $\F^{n^3}$.
Moreover, $\UnF$ is also a union of orbits in view of Remark~\ref{RemarkUnimodularCriterium}.
We can thus regard $\UnF$ as an $\F G$-submodule of $\F^{n^3}$. %(for $G=\GlnF$), via the action of $G$ on $\F^{n^3}$ we are considering.

\medskip

Next we compute $\dim_{\F}\UnF$. %which will help in constructing a basis for $\UnF$ that we will need later on.
%For this consider an arbitrary vector $\boldsymbol\lambda=(\lambda_{ijk})\in\UnF$.
%Temporarily restricting our attention only to the fact that
Assume first that $\lambda=(\lambda_{ijk})\in\SknF$.
This forces $\lambda_{iik}=0$ and $\lambda_{ijk}=-\lambda_{jik}$ for $1\le i,j,k\le n$.
We see that this results in $\boldsymbol\lambda$ having $\frac12n^2(n-1)\,(=\dim_{\F}\SknF)$ `free' positions, the components in the remaining positions of $\boldsymbol\lambda$ being forced.
(With $T$ as in Definition~\ref{DefTDegF} we can take for example $T_1=\{(i,j,k)\in T:\ i<j\}$ as our set of `free' positions for $\boldsymbol\lambda$.)
The fact that $\lambda_{ijk}=-\lambda_{jik}$ ensures that $T_2\cup T_3$, where $T_2=\{(i,j,k)\in T:\ i<j$ and $k\not\in\{i,j\}\}$ and $T_3=\{(i,j,k)\in T:\ k=j$ and $k\ne i\}$, can also be taken as the set of `free' positions for $\boldsymbol\lambda\in\SknF$.
We now make the further  assumption that $\boldsymbol\lambda\in\UnF$.
Note that $T_2$ has exactly $\frac12n(n-1)(n-2)$ members and no new constraints are imposed on the corresponding coefficients by our further assumption (giving $\frac12n(n-1)(n-2)$ `free' positions for $\boldsymbol\lambda\in\UnF$ from set $T_2$).
The set $T_3$ has exactly $n(n-1)$ members which can be conveniently partitioned into $n$ disjoint classes each having $(n-1)$ members.
These are the classes $\{(1,j,j):\ j\ne1\}$, $\{(2,j,j):\ j\ne2\}$, \ldots, $\{(n,j,j):\ j\ne n\}$.
Now for $\boldsymbol\lambda$ to be an element of $\UnF$ what is required is precisely the condition that the sum of the coefficients of $\boldsymbol\lambda$ corresponding to the positions within each one of the above classes equals $0_{\F}$.
This gives $n(n-2)$ `free' positions for $\boldsymbol\lambda$ from set $T_3$.
We conclude that $\dim_{\F}\UnF=\frac12n(n-1)(n-2)+n(n-2)=\frac12n(n-2)(n+1)=\dim_{\F}\SknF-n=\dim_{\F}\SknF-\dim_{\F}\boldsymbol\rho(\F G)$.

\medskip

We are now ready to construct a spanning set for $\UnF$ which we will need in the next lemma:
For $1\le r,s,t\le n$ with $r<s$ and $t\not\in\{r,s\}$, define $\hat{\boldsymbol\lambda}(r,s,t)$ to be the the structure vector $(\hat \lambda_{ijk})\in\UnF$ where
\[
\hat \lambda_{ijk}=\left\{
\begin{array}{ll}
1_{\F},& \mbox{if } (i,j,k)=(r,s,t),\\
-1_{\F},& \mbox{if } (i,j,k)=(s,r,t),\\
0_{\F},& \mbox{otherwise}.
\end{array}
\right.
\]
Moreover, for $1\le r,s,t\le n$ with $s\ne t$ and $r\not\in\{s,t\}$ define $\tilde{\boldsymbol\lambda}(r,s,t)$ to be the structure vector $(\tilde\lambda_{ijk})$ where
\[
\tilde\lambda_{ijk}=\left\{
\begin{array}{ll}
1_{\F},& \mbox{if } (i,j,k)=(r,s,s) \mbox{ or } (i,j,k)=(t,r,t),\\
-1_{\F},& \mbox{if } (i,j,k)=(r,t,t) \mbox{ or } (i,j,k)=(s,r,s),\\
0_{\F},& \mbox{otherwise}.
\end{array}
\right.
\]
It is then an easy consequence of the preceding discussion that the union $\{\hat{\boldsymbol\lambda}(r,s,t):\ 1\le r,s,t\le n$ with
$r<s$ and $t\not\in\{r,s\}\}\cup\{\tilde{\boldsymbol\lambda}(r,s,t):\ 1\le r,s,t\le n$ with $s\ne t$ and $r\not\in\{s,t\}\}$ is in fact a spanning set for $\UnF$.
%We can now prove the following
\begin{lemma}\label{LemmaMnF}
$\UnF=\boldsymbol\eta(\F G)$. %where $\boldsymbol\eta(\F G)$ is the $\F G$-submodule of $\F^{n^3}$ generated by $\eta$.
\end{lemma}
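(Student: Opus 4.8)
The plan is to prove the two inclusions $\boldsymbol\eta(\F G)\subseteq\UnF$ and $\UnF\subseteq\boldsymbol\eta(\F G)$ separately. The first is immediate: inspecting $\boldsymbol\eta$ we see that no component of the form $\eta_{ijj}$ is nonzero, so $\sum_{j=1}^n\eta_{ijj}=0_{\F}$ for each $i$ and hence $\boldsymbol\eta\in\UnF$; since $\UnF$ is an $\F G$-submodule of $\F^{n^3}$ (see Remark~\ref{RemarkUnimodularCriterium} and Definition~\ref{DefUnF}), it follows that the submodule $\boldsymbol\eta(\F G)$ generated by $\boldsymbol\eta$ is contained in $\UnF$.

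For the reverse inclusion I would first record an auxiliary family of orbit elements. Given an alternating matrix $\omega=(\omega_{ab})\in\F^{n\times n}$ of rank $2$ and a vector $c=(c_k)\in\F^n$ lying in the radical of $\omega$ (that is, $\sum_a\omega_{ab}c_a=0_{\F}$ for all $b$), let $\theta(\omega,c)\in\F^{n^3}$ be the structure vector with $\theta(\omega,c)_{ijk}=\omega_{ij}c_k$; note that $\theta$ is bilinear in the pair $(\omega,c)$. The corresponding algebra has multiplication $[x,y]=\omega(x,y)\,c$, so its derived algebra is $\Fspan(c)$, and because $c$ lies in the radical of $\omega$ one checks that $[[x,y],z]=\omega(x,y)\,\omega(c,z)\,c=0_V$; thus the algebra is $2$-step nilpotent with a one-dimensional central derived algebra. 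Choosing a symplectic pair for $\omega$ together with $c$ as part of an $\F$-basis of $V$ (here the assumption $n\ge3$ provides the necessary room) exhibits it as $\cong\mathfrak h_n$, so $\theta(\omega,c)\in O(\boldsymbol\eta)\subseteq\boldsymbol\eta(\F G)$ whenever $c\ne0_V$ (and $\theta(\omega,c)=\mathbf0$ when $c=0_V$).

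It then remains to show that every element of the spanning set of $\UnF$ exhibited just before the lemma lies in $\boldsymbol\eta(\F G)$. For the first type I would simply observe that $\hat{\boldsymbol\lambda}(r,s,t)=\theta(e_r\wedge e_s,\,e_t)$, where $e_r\wedge e_s$ denotes the alternating matrix with $(r,s)$-entry $1_{\F}$; since $t\notin\{r,s\}$ the vector $e_t$ lies in the radical, so this is an orbit element. The main obstacle is the second type, since $\tilde{\boldsymbol\lambda}(r,s,t)$ is \emph{not} itself a structure vector of a Heisenberg-type algebra (indeed it is not even nilpotent); it only becomes accessible as a genuine $\F$-linear combination of orbit elements, via the polarization of $v\mapsto\theta(\omega_v,v)$ as $v$ ranges over $\Fspan(e_s,e_t)$. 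A direct computation yields
\[
\tilde{\boldsymbol\lambda}(r,s,t)=\theta\big(e_r\wedge(e_s-e_t),\,e_s+e_t\big)-\theta(e_t\wedge e_r,\,e_s)-\theta(e_r\wedge e_s,\,e_t),
\]
and in each summand the target vector ($e_s+e_t$, $e_s$, $e_t$ respectively) is readily verified to lie in the radical of the associated rank-$2$ alternating matrix, so all three terms are orbit elements. Crucially this decomposition uses no division, so it is valid irrespective of $\Char\F$. As these vectors span $\UnF$, we conclude $\UnF\subseteq\boldsymbol\eta(\F G)$, which together with the first inclusion gives $\UnF=\boldsymbol\eta(\F G)$.
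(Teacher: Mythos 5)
Your proof is correct, and for the harder inclusion it takes a genuinely different (though ultimately cognate) route from the paper's. Both arguments reduce $\UnF\subseteq\boldsymbol\eta(\F G)$ to showing that the spanning set $\{\hat{\boldsymbol\lambda}(r,s,t)\}\cup\{\tilde{\boldsymbol\lambda}(r,s,t)\}$ lies in $\boldsymbol\eta(\F G)$, but the paper does this by acting on $\boldsymbol\eta$ with explicit group elements --- permutation matrices to carry $(1,2,3)$ to an arbitrary admissible triple, plus one elementary matrix $g'$ for which $\boldsymbol\eta g'=\hat{\boldsymbol\lambda}(1,2,3)-\hat{\boldsymbol\lambda}(1,3,2)-\tilde{\boldsymbol\lambda}(1,2,3)$ --- whereas you first identify, intrinsically and without naming any group elements, a large family of points of $O(\boldsymbol\eta)\cup\{{\bf 0}\}$, namely the vectors $\theta(\omega,c)$ with $\omega$ alternating of rank $2$ and $c$ in its radical, and then write down one closed-form integral identity valid uniformly in $(r,s,t)$. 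I checked your identity for $\tilde{\boldsymbol\lambda}(r,s,t)$ coordinate by coordinate and it holds; each of the three summands satisfies the radical condition (for the first, $e_s+e_t$ pairs to zero with everything under $e_r\wedge(e_s-e_t)$ because its $r$-coordinate vanishes and its $s$- and $t$-coordinates are equal), and your identification of such a $\theta(\omega,c)$ with a structure vector of $\mathfrak h_n$ (a symplectic pair for $\omega$ together with $c$ completed to a basis of the $(n-2)$-dimensional radical, which is where $n\ge3$ enters) is sound. After unwinding, your identity is essentially the paper's elementary-matrix computation in disguise, but your packaging buys uniformity (no permutation bookkeeping) and a more conceptual picture of which vectors lie in $O(\boldsymbol\eta)$, while the paper's version is more economical in its prerequisites. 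Crucially, both arguments are division-free and hence work over any infinite field regardless of characteristic, as required.
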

\begin{proof}
Note first that $\boldsymbol\eta=\hat{\boldsymbol\lambda}(1,2,3)$.
It follows that $\boldsymbol\eta\in\UnF$ and hence $\boldsymbol\eta(\F G)\subseteq\UnF$.
To prove that $\UnF\subseteq\boldsymbol\eta(\F G)$ it is enough to show that the above $\F$-basis we have constructed for $\UnF$
is contained in $\boldsymbol\eta(\F G)$.
It will be convenient in what follows to identify a permutation in the symmetric group $S_n$ with the permutation matrix in $G=\GlnF$ obtained by applying this permutation on the rows of the identity matrix.
It is then easy to check that for $g\in S_n$ and $\boldsymbol\lambda=(\lambda_{ijk})\in\F^{n^3}$, we have $\boldsymbol\lambda g=(\lambda_{ijk}')\in\F^{n^3}$, where $\lambda_{ijk}'=\lambda_{g(i),g(j),g(k)}$.
So, starting with $\boldsymbol\eta=\hat{\boldsymbol\lambda}(1,2,3)$ we can obtain (inside $\boldsymbol\eta(\F G)$) all elements of $\UnF$ of the form $\hat{\boldsymbol\lambda}(r,s,t)$, with $1\le r,s,t\le n$ and such that $r<s$ and $t\not\in\{r,s\}$, we have described above.

We next consider the action on $\boldsymbol\eta$ by the elementary matrix $g'$ obtained from the identity matrix by adding the second column of the identity matrix to its third column.
Then $\boldsymbol\eta g'=\hat{\boldsymbol\lambda}(1,2,3)g'=\hat{\boldsymbol\lambda}(1,2,3)-\hat{\boldsymbol\lambda}(1,3,2)-\tilde{\boldsymbol\lambda}(1,2,3)$, so $\tilde{\boldsymbol\lambda}(1,2,3)\in\boldsymbol\eta(\F G)$.
Considering now $\tilde{\boldsymbol\lambda}(1,2,3)$ and acting successively by the permutations $(3\,4)$, $(4\,5)$, \ldots, $(n\!-\!1\, n)$ we obtain the elements $\tilde{\boldsymbol\lambda}(1,2,4)$, $\tilde{\boldsymbol\lambda}(1,2,5)$, \ldots, $\tilde{\boldsymbol\lambda}(1,2,n)$ inside $\boldsymbol\eta(\F G)$.
On setting $g''$ to be the $n$-cycle $(n\, n\!-\!1\, \ldots\, 2\, 1)\in S_n$, we see that
$\{\tilde{\boldsymbol\lambda}(2,s,t):\ 1\le s,t\le n$ with $s\ne t$ and $2\not\in\{s,t\}\}\subseteq\boldsymbol\eta(\F G)$.
Finally, it is easy to see that further applications of $g''$ on this last subset of elements of $\boldsymbol\eta(\F G)$ generate the remaining elements of the spanning set we have constructed above.
\end{proof}

Below we discuss the composition series of $\SknF$ as an $\F G$-module.
For this we need to consider the cases $\Char\F\mbox{$\not|$}(n-1)$ and $\Char\F|(n-1)$ separately.
Recall our assumption that $\F$ is an arbitrary infinite field and that $n\ge3$.
We include a preliminary remark first.

\begin{remark}\label{RemarkRhoInMnF}
A useful observation is that $\boldsymbol\rho\in\UnF$ if, and only if, $\Char\F|(n-1)$.
This is easily seen by considering the basis $(v_1^*,\ldots,v_n^*)$ of $V$ we have fixed relative to which $\boldsymbol\rho$ is the structure vector of $\mathfrak{r}_n=(V,[,]_{\mathfrak{r}_n})\in\SknFa$ where $[v_i^*,v_n^*]=v_i^*$ for $1\le i\le n-1$ (see Remark~\ref{RemarkHeisenbergRemA}).
Then ${\rm trace}(\ad_{v_i^*})=0_{\F}$ for $1\le i\le n-1$ and ${\rm trace}(\ad_{v_n^*})=-(n-1)\cdot1_{\F}$. %{\small\it $(={}^{-}1_{\F}+\ldots+{}^{-}1_{\F}, (n-1)$ terms)}.

Also note that $\boldsymbol\rho(\F G)$ is an irreducible $\F G$-submodule of $\SknF$ for any field $\F$, since as sets
$\boldsymbol\rho(\F G)=O(\boldsymbol\rho)\cup\{{\bf0}\}$.
\end{remark}

(i) case $\Char\F\mbox{$\not|$}(n-1)$:
Then $\boldsymbol\rho(\F G)\cap \boldsymbol\eta(\F G)=\{{\bf0}\}$ in view of the above remark (recall $\boldsymbol\eta(\F G)=\UnF$ always).
Moreover, our assumption on $\Char\F$ ensures that $\boldsymbol\eta(\F G)$ is an irreducible $\F G$-module.
(Explanation: Suppose $M$ is a nonzero $\F G$-submodule of $\boldsymbol\eta(\F G)$.
Then there exists $\boldsymbol\lambda\in M\setminus\{{\bf0}\}$.
Since $\boldsymbol\lambda\not\in\boldsymbol\rho(\F G)$, invoking Corollary~\ref{NotPDegeneratesToMu''OtherwiseLev>1} we see that $\boldsymbol\eta\in\overline{O(\boldsymbol\lambda)}$.
But $\overline{O(\boldsymbol\lambda)}\subseteq M$ since $M$, as an $\F G$-submodule of $\F^{n^3}$, is a Zariski-closed subset of $\F^{n^3}$ containing $O(\boldsymbol\lambda)$.
It follows that $\boldsymbol\eta\in M$.
We conclude that $M=\boldsymbol\eta(\F G)$ since $\boldsymbol\eta(\F G)\subseteq M\subseteq\boldsymbol\eta(\F G)$.
This establishes that $\boldsymbol\eta(\F G)$ is irreducible when $\Char\F\not|(n-1)$.)
So, in this case, $\boldsymbol\rho(\F G)$ and $\boldsymbol\eta(\F G)$ are both irreducible $\F G$-submodules of $\SknF$.
Moreover, their dimensions as $\F$-spaces add up to $\dim_{\F}\SknF$ (see the discussion after Definition~\ref{DefUnF}) giving $\SknF=\boldsymbol\rho(\F G)\oplus\boldsymbol\eta(\F G)$.
We conclude that in this case, $\SknF$ has precisely two composition series, namely $\{{\bf0}\}\subseteq\boldsymbol\rho(\F G)\subseteq\SknF$ and $\{{\bf0}\}\subseteq\boldsymbol\eta(\F G)\subseteq\SknF$.
(This is because any irreducible $\F G$-submodule of $\SknF$ which is not equal to $\boldsymbol\rho(\F G)$ must necessarily be
$\boldsymbol\eta(\F G)$ by similar argument as above since such a module contains an element $\boldsymbol\mu$ with $\boldsymbol\eta\in\overline{O(\boldsymbol\mu)}$.)

(ii)
case $\Char\F|(n-1)$:
Then $\boldsymbol\rho\in\boldsymbol\eta(\F G)$ and hence $\boldsymbol\rho(\F G)\subseteq\boldsymbol\eta(\F G)$.
Since $\boldsymbol\rho(\F G)$ is always irreducible (regardless of the characteristic of $\F$) we get, again by similar argument as above, that
$\boldsymbol\rho(\F G)$ is the only irreducible $\F G$-submodule of $\SknF$ when $\Char\F|(n-1)$ and, in addition, that every composition series for $\SknF$ necessarily begins with $\{{\bf0}\}\subseteq\boldsymbol\rho(\F G)\subseteq\boldsymbol\eta(\F G)$.

\section{A proof of Theorem~\ref{TheoremMainPart2}}\label{SectionLevel1NonSkewSym}

We continue with our hypothesis that $\F$ is an arbitrary infinite field. Also let $n\ge2$ for this section.

\begin{definition}\label{DefRemainingLev1Alg}
Following~\cite{Khudoyberdiyev&Omirov2013} we introduce the algebra structures~$\mathfrak{d}_n$ and~$\mathfrak{e}_n(\alpha)$, for $\alpha\in\F$, as follows.
Let $\boldsymbol{\delta}_n=(\delta_{ijk})\in\F^{n^3}$ be the structure vector which has $\delta_{112}=1_{\F}$ as its only nonzero component.
Also, for $\alpha\in\F$, let $\boldsymbol{\varepsilon}_n(\alpha)=(\varepsilon_{ijk}(\alpha))\in\F^{n^3}$ be the structure vector which has $\varepsilon_{111}(\alpha)=1_{\F}$, $\varepsilon_{1ii}(\alpha)=\alpha$ (for $2\le i\le n$) and $\varepsilon_{i1i}(\alpha)=(1_{\F}-\alpha)$ (for $2\le i\le n$) as its only components which can possibly be nonzero.
Finally define the algebra structures $\mathfrak{d}_n,\mathfrak{e}_n(\alpha)\in\AnF$ by $\mathfrak{d}_n=\Theta^{-1}(\boldsymbol{\delta}_n)$ and $\mathfrak{e}_n(\alpha)=\Theta^{-1}(\boldsymbol{\varepsilon}_n(\alpha))$ with $\Theta$ as in Remark~\ref{RemarkAnFFn3AreVectorSpaces}.
\end{definition}

We now collect some immediate consequences of this definition.

\begin{remark}\label{RemarkPropOfDE}
(i) Fix $\alpha\in\F$ and let $(\beta_1,\ldots,\beta_n)\in\F^n$.
Also let $\mathfrak{g}(\alpha;\beta_1,\ldots,\beta_n)=(V,[,])\in\AnF$ be the algebra structure which, relative to our basis $(v_i^*)_{i=1}^n$, has the following as the only components which can possibly be nonzero: $[v_i^*, v_i^*]=\beta_i$, ($1\le i\le n$) and $[v_i^*,v_j^*]=\alpha\beta_iv_j^*+(1-\alpha)\beta_jv_i^*$ (for $1\le i,j\le n$, $i\ne j$).
It is easy to check that $O(\boldsymbol{\varepsilon}_n(\alpha))=\{\Theta(\mathfrak{g}(\alpha;\beta_1,\ldots,\beta_n)):$ $(\beta_1,\ldots,\beta_n)\ne(0_{\F},\ldots,0_{\F})\}$.
It follows that, for each $\alpha\in\F$, the union $O(\boldsymbol{\varepsilon}_n(\alpha))\cup\{{\bf0}\}$ is an $\F$-subspace of $\F^{n^3}$, and $\overline{O(\boldsymbol{\varepsilon}_n(\alpha))}=O(\boldsymbol{\varepsilon}_n(\alpha))\cup\{{\bf0}\}$.
Moreover, for $\alpha_1$, $\alpha_2\in\F$ with $\alpha_1\ne\alpha_2$ we have $O(\boldsymbol{\varepsilon}_n(\alpha_1))\cap O(\boldsymbol{\varepsilon}_n(\alpha_2))=\varnothing$.
Finally note that $\boldsymbol{\delta}_n$ does not belong to any of these orbits.

(ii) The algebra structure $\mathfrak{d}_n=(V,[,]_{\mathfrak{d}_n})\in\AnF$ satisfies the commutativity relation $[x,y]_{\mathfrak{d}_n}=[y,x]_{\mathfrak{d}_n}$ for all $x,y\in V$.
Moreover, $\mathfrak{d}_n\in\BnFa$ and $\dim_{\F}(\ann\mathfrak{d}_n)=\dim_{\F}(\ann_L\mathfrak{d}_n)=n-1$.
\end{remark}

\begin{lemma}\label{LemmaDnIsLevel1}
$\overline{O(\boldsymbol{\delta}_n)}=O(\boldsymbol{\delta}_n)\cup\{{\bf0}\}$.
\end{lemma}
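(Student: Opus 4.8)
The plan is to follow the pattern of the proof of Lemma~\ref{LemmaHeisenbergIsLevel1}. The inclusion $O(\boldsymbol{\delta}_n)\cup\{{\bf0}\}\subseteq\overline{O(\boldsymbol{\delta}_n)}$ is automatic: $O(\boldsymbol{\delta}_n)\subseteq\overline{O(\boldsymbol{\delta}_n)}$ always, while ${\bf0}\in\overline{O(\boldsymbol{\delta}_n)}$ because every algebra structure degenerates to $\mathfrak{a}_n$ (Example~\ref{ExampleHomPolynAndDegenToIdeal}(i)). For the reverse inclusion it suffices to take an arbitrary proper degeneration $\mathfrak{g}=(V,[,])\in\AnF$ of $\mathfrak{d}_n$ with $\mathfrak{g}\ne\mathfrak{a}_n$ and derive a contradiction; this forces every element of $\overline{O(\boldsymbol{\delta}_n)}$ to lie in $O(\boldsymbol{\delta}_n)\cup\{{\bf0}\}$.

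First I would collect the closed, isomorphism-invariant conditions that such a $\mathfrak{g}$ must inherit from $\mathfrak{d}_n$. The commutative structure vectors form the linear subspace $\{\boldsymbol\lambda=(\lambda_{ijk}):\ \lambda_{ijk}=\lambda_{jik}\ \text{for all}\ i,j,k\}$, which is Zariski-closed and a union of orbits; since $\mathfrak{d}_n$ is commutative (Remark~\ref{RemarkPropOfDE}(ii)), $\mathfrak{g}$ is commutative. Likewise $\BnFa$ is closed and a union of orbits (Remark~\ref{RemarkZarClosedSets}(i)) and $\mathfrak{d}_n\in\BnFa$ (Remark~\ref{RemarkPropOfDE}(ii)), so $\mathfrak{g}$ is metabelian. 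Finally, Lemma~\ref{LemmaDimAnnIncreasesDimSquareDecreases} applied to the degeneration $\mathfrak{d}_n$ to $\mathfrak{g}$ gives $\dim_{\F}(\ann_L\mathfrak{g})\ge\dim_{\F}(\ann_L\mathfrak{d}_n)=n-1$. Since $\mathfrak{g}\ne\mathfrak{a}_n$ we have $\mathfrak{g}^2\ne\{0_V\}$, so $\ann_L\mathfrak{g}\ne V$ and therefore $\dim_{\F}(\ann_L\mathfrak{g})=n-1$. Commutativity gives $\ann_L\mathfrak{g}=\ann\mathfrak{g}$.

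It then remains to classify. I would choose $b_1\in V\setminus\ann\mathfrak{g}$ and write $V=\Fspan(b_1)\oplus\ann\mathfrak{g}$. Because the complement is the (two-sided) annihilator, the only product between basis vectors that can be nonzero is $[b_1,b_1]$, so $\mathfrak{g}^2=\Fspan([b_1,b_1])$ and $[b_1,b_1]\ne0_V$. The key step is to show $[b_1,b_1]\in\ann\mathfrak{g}$ and that it lies outside $\Fspan(b_1)$: the metabelian identity gives $[[b_1,b_1],z]=0_V$ for all $z$, so $[b_1,b_1]\in\ann_L\mathfrak{g}=\ann\mathfrak{g}$, and writing $[b_1,b_1]=\alpha b_1+u$ with $u\in\ann\mathfrak{g}$ forces $\alpha=0_{\F}$ (otherwise $b_1\in\ann\mathfrak{g}$). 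Setting $e_1=b_1$, $e_2=[b_1,b_1]$ and completing $\{e_2\}$ to a basis $(e_i)_{i=2}^n$ of $\ann\mathfrak{g}$ yields an ordered basis of $V$ relative to which the structure vector of $\mathfrak{g}$ is exactly $\boldsymbol{\delta}_n$; hence $\mathfrak{g}\cong\mathfrak{d}_n$, i.e. the structure vector of $\mathfrak{g}$ lies in $O(\boldsymbol{\delta}_n)$, contradicting the assumption that $\mathfrak{g}$ is a \emph{proper} degeneration.

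The crux is the classification of commutative metabelian algebras with $\dim_{\F}\ann=n-1$ (the analogue for this family of Lemma~\ref{LemmadimZ=n-2Heisenberg}); everything hinges on confirming that the invariants transported along the degeneration --- commutativity, the metabelian identity, and the annihilator dimension --- are together strong enough to pin down the isomorphism type, and in particular on the use of the metabelian identity to place $[b_1,b_1]$ inside the annihilator. Once that is secured the contradiction is immediate, and combining the two inclusions gives $\overline{O(\boldsymbol{\delta}_n)}=O(\boldsymbol{\delta}_n)\cup\{{\bf0}\}$.
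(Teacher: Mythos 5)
Your proof is correct and follows essentially the same route as the paper: transport the closed, orbit-invariant conditions (commutativity, the metabelian identity, and $\dim_{\F}(\ann_L)\ge n-1$ via Lemma~\ref{LemmaDimAnnIncreasesDimSquareDecreases}) to any proper degeneration $\mathfrak{g}\ne\mathfrak{a}_n$, then use the metabelian identity to place $[b_1,b_1]$ in $\ann\mathfrak{g}\setminus\Fspan(b_1)$ and build a basis exhibiting $\mathfrak{g}\cong\mathfrak{d}_n$, a contradiction. You merely spell out a few steps the paper leaves implicit (why the annihilator dimension is exactly $n-1$, and why $[b_1,b_1]\in\ann\mathfrak{g}$); no gaps.
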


\begin{proof}
Let $\mathfrak{g}=(V,[,]_{\mathfrak{g}})\in\AnF$ be a proper degeneration of $\mathfrak{d}_n$ with $\mathfrak{g}\ne\mathfrak{a}_n$.
In view of the above remark we get that $\mathfrak{g}\in\BnFa$ and also that $[x,y]_{\mathfrak{g}}=[y,x]_{\mathfrak{g}}$ for all $x,y\in V$.
Invoking Lemma~\ref{LemmaDimAnnIncreasesDimSquareDecreases} we get in addition that $\dim_{\F}(\ann\mathfrak{g})=\dim_{\F}(\ann_L\mathfrak{g})=n-1$.
We can thus complete a basis $(e_i)_{i=2}^n$ of $\ann\mathfrak{g}$ to a basis $(e_i)_{i=1}^n$ of $V$.
Then $[e_1,e_1]_{\mathfrak{g}}\ne0_V$ (otherwise we would have $\ann\mathfrak{g}=V$).
The fact that $\mathfrak{g}\in\BnFa$ also ensures that $[e_1,e_1]_{\mathfrak{g}}\in\ann\mathfrak{g}$ and $[e_1,e_1]_{\mathfrak{g}}\not\in\Fspan(e_1)$.
Hence we can consider a basis $(v_i)_{i=2}^n$ of $\ann\mathfrak{g}$ with $v_2=[e_1,e_1]_{\mathfrak{g}}$.
Completing to a basis $(v_i)_{i=1}^n$ of $V$ with $v_1=e_1$ we get that $\mathfrak{g}\cong \mathfrak{d}_n$.
\end{proof}

For the remaining lemmas we use exactly the same ideas and line of proof as in~\cite[Proposition~2.2]{Khudoyberdiyev&Omirov2013}.
To obtain results over an arbitrary field we make use of Lemma~\ref{LemmaTechnique*}.

\begin{lemma}\label{(*)DegeneratesToDn}
Let $\mathfrak{g}=(V,[,])\in\AnF$ and suppose $\mathfrak{g}$ does not satisfy condition~$(**)$.
Then $\mathfrak{g}$ degenerates to $\mathfrak{d}_n$.
\end{lemma}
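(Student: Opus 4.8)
The plan is to reduce the statement to a single application of Lemma~\ref{LemmaTechnique*} after choosing a basis adapted to an element that witnesses the failure of condition~$(**)$. First I would unwind the hypothesis: since $\mathfrak g$ does not satisfy condition~$(**)$ (Definition~\ref{DefCond*}), there is some $x\in V$ with $[x,x]_{\mathfrak g}\notin\Fspan(x)$. This immediately forces $x\ne 0_V$ and $[x,x]_{\mathfrak g}\ne 0_V$, and a one-line argument shows that $\{x,[x,x]_{\mathfrak g}\}$ is $\F$-linearly independent (if $[x,x]_{\mathfrak g}$ were a scalar multiple of $x$ it would lie in $\Fspan(x)$). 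Since $n\ge 2$, I can complete this to an ordered basis $(e_i)_{i=1}^n$ of $V$ with $e_1=x$ and $e_2=[x,x]_{\mathfrak g}$. Writing $\boldsymbol\lambda=(\lambda_{ijk})$ for the structure vector of $\mathfrak g$ relative to $(e_i)_{i=1}^n$, the relation $[e_1,e_1]_{\mathfrak g}=e_2$ says precisely that $\lambda_{112}=1_{\F}$ while $\lambda_{11k}=0_{\F}$ for every $k\ne 2$.

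The heart of the argument is the choice of weight vector. I would take $\hat q=(q_i)_{i=1}^n\in\mathbb Z^n$ with $q_1=1$ and $q_i=2$ for $2\le i\le n$. With this choice one checks directly that $q_i+q_j-q_k\ge 1+1-2=0$ for every triple $(i,j,k)$, so that $\cup_{r<0}S(\hat q,r)=\varnothing$ and the hypothesis of Lemma~\ref{LemmaTechnique*} is satisfied vacuously, with no condition imposed on $\boldsymbol\lambda$. A second direct computation identifies the weight-zero positions: the equality $q_i+q_j-q_k=0$ holds exactly when $q_i=q_j=1$ and $q_k=2$, that is, when $i=j=1$ and $2\le k\le n$, so $S(\hat q,0)=\{(1,1,k):2\le k\le n\}$. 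Consequently $\boldsymbol\lambda(\hat q)$ retains only the components $\lambda_{11k}$ with $2\le k\le n$ and zeroes out everything else; by the previous paragraph these are all zero except $\lambda_{112}=1_{\F}$, whence $\boldsymbol\lambda(\hat q)=\boldsymbol\delta_n$.

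Finally, Lemma~\ref{LemmaTechnique*} yields $\boldsymbol\delta_n=\boldsymbol\lambda(\hat q)\in\overline{O(\boldsymbol\lambda)}$, which is exactly the assertion that $\mathfrak g$ degenerates to $\mathfrak d_n=\Theta^{-1}(\boldsymbol\delta_n)$ (Definition~\ref{DefRemainingLev1Alg} together with Definition~\ref{DefDegeneration}). The only genuinely creative step is the selection of $\hat q$: it must simultaneously force all the weights $q_i+q_j-q_k$ to be nonnegative (so that Lemma~\ref{LemmaTechnique*} applies with no hidden vanishing hypotheses on the structure constants of $\mathfrak g$) and single out $(1,1,2)$ as essentially the only surviving weight-zero position. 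Placing the distinguished index $1$ at the strictly smallest weight and all other indices at a common larger weight achieves both at once; the remaining weight-zero positions $(1,1,k)$ with $k\ge 3$ are harmless precisely because they already carry zero structure constants by construction of the basis. I expect the bookkeeping with the sets $S(\hat q,r)$ to be the only place requiring care, and it is entirely routine.
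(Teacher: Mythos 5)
Your proposal is correct and follows essentially the same route as the paper: the same adapted basis $e_1=x$, $e_2=[x,x]_{\mathfrak g}$, the same weight vector $\hat q=(1,2,\ldots,2)$, and a single application of Lemma~\ref{LemmaTechnique*}. You merely spell out the verification that $\cup_{r<0}S(\hat q,r)=\varnothing$ and that $\boldsymbol\lambda(\hat q)=\boldsymbol\delta_n$, details the paper leaves implicit.
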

\begin{proof}
Assume the hypothesis and let $x\in V$ be such that $[x,x]\not\in\Fspan(x)$.
Clearly $x\ne0_V$ and we can consider a basis $(e_i)_{i=1}^n$ of $V$ with $e_1=x$ and $e_2=[x,x]$.
Let $\boldsymbol\lambda=(\lambda_{ijk})$ be the structure vector of $\mathfrak{g}$ relative to this basis and set $\hat q=(q_i)_{i=1}^n\in\mathbb Z^n$ with $q_1=1$ and $q_i=2$ for $i\ge2$.
Invoking Lemma~\ref{LemmaTechnique*} we get the desired result.
\end{proof}

Combining the results of this section so far with Lemma~\ref{LemmaNot(*)DegeneratesToHeis} and Corollary~\ref{CorolPsatisfiesCond*}, in order to complete the proof of Theorem~\ref{TheoremMainPart2} it suffices to consider the case $\mathfrak{g}\in\AnF\setminus\SknFa$ with $\mathfrak{g}$  satisfying condition~$(*)$:

\begin{lemma}\label{}
Let $\mathfrak{g}=(V,[,])\in\AnF\setminus\SknFa$ and suppose $\mathfrak{g}$ satisfies condition~$(*)$.
Then $\mathfrak{g}$ degenerates to $\mathfrak{e}_n(\alpha)$ for some $\alpha\in\F$.
\end{lemma}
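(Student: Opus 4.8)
The plan is to manufacture a basis adapted to $\mathfrak g$ and then invoke Lemma~\ref{LemmaTechnique*} with $\hat q=(0,1,\ldots,1)$ to collapse $\mathfrak g$ directly onto $\boldsymbol\varepsilon_n(\alpha)$. Since condition~$(*)$ forces condition~$(**)$ and $\mathfrak g\notin\SknFa$, there is some $x_0\in V$ with $[x_0,x_0]_{\mathfrak g}\ne0_V$; by~$(**)$ we have $[x_0,x_0]_{\mathfrak g}=c\,x_0$ with $c\in\F^*$, so rescaling $e_1=c^{-1}x_0$ produces an idempotent direction with $[e_1,e_1]_{\mathfrak g}=e_1$. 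The value $\alpha$ will emerge as an eigenvalue intrinsic to this $e_1$.

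First I would analyse the two multiplication operators $L\colon y\mapsto[e_1,y]_{\mathfrak g}$ and $R\colon y\mapsto[y,e_1]_{\mathfrak g}$. Because $[e_1,e_1]_{\mathfrak g}=e_1$, the line $\Fspan(e_1)$ is invariant under both, so each descends to a linear operator on the quotient $V/\Fspan(e_1)$. Condition~$(*)$ gives $[e_1,y]_{\mathfrak g}\in\Fspan(e_1,y)$, whence the induced operator sends every nonzero coset to a scalar multiple of itself; and an operator for which every nonzero vector is an eigenvector is necessarily scalar. Thus $L$ and $R$ induce scalars $\alpha$ and $\gamma$ in $\F$. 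Completing $e_1$ to a basis $(e_i)_{i=1}^n$ I obtain $[e_1,e_i]_{\mathfrak g}=\mu_ie_1+\alpha e_i$ and $[e_i,e_1]_{\mathfrak g}=\nu_ie_1+\gamma e_i$ for $i\ge2$, while for $i,j\ge2$ condition~$(*)$ applied to the pairs $(e_1+e_i,e_j)$ and $(e_i,e_1+e_j)$ pins the remaining products down to $[e_i,e_j]_{\mathfrak g}=\mu_je_i+\nu_ie_j$ (with $[e_i,e_i]_{\mathfrak g}\in\Fspan(e_i)$).

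The crucial step is the balance relation $\alpha+\gamma=1_{\F}$. I would extract it by evaluating condition~$(*)$ on the pair $(e_1+e_i,\,e_1+e_j)$ with $i\ne j$ both at least $2$: expanding $[e_1+e_i,e_1+e_j]_{\mathfrak g}$ using the products above and demanding membership in $\Fspan(e_1+e_i,e_1+e_j)$ forces the $e_i$- and $e_j$-coefficients to sum to the $e_1$-coefficient, which after cancelling $\mu_j$ and $\nu_i$ is exactly $\alpha+\gamma=1_{\F}$. This is precisely where $n\ge3$ is used, as three distinct basis vectors $e_1,e_i,e_j$ are required, and it matches the range of Theorem~\ref{TheoremMainPart2}. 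With $\gamma=1_{\F}-\alpha$ secured, the only structure constants in this basis occupying positions $(i,j,k)$ with $q_i+q_j-q_k\le0$ are $\lambda_{111}=1_{\F}$, $\lambda_{1ii}=\alpha$ and $\lambda_{i1i}=1_{\F}-\alpha$ for $i\ge2$; every other nonzero constant (the $\mu_i,\nu_i$, the self-products, and the $[e_i,e_j]_{\mathfrak g}$) sits at a position of strictly positive $\hat q$-degree, and no position of negative $\hat q$-degree carries a nonzero constant. Lemma~\ref{LemmaTechnique*} then applies and yields $\boldsymbol\lambda(\hat q)=\boldsymbol\varepsilon_n(\alpha)\in\overline{O(\boldsymbol\lambda)}$, i.e.\ $\mathfrak g$ degenerates to $\mathfrak e_n(\alpha)$.

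The main obstacle I anticipate is the purely algebraic bookkeeping of the middle step: deducing from condition~$(*)$ alone both the off-diagonal formula $[e_i,e_j]_{\mathfrak g}=\mu_je_i+\nu_ie_j$ and the balance $\alpha+\gamma=1_{\F}$. Once these are in place the rest is routine, since Lemma~\ref{LemmaTechnique*} only discards the positive-degree constants; the real content is that the surviving constants already assemble into $\boldsymbol\varepsilon_n(\alpha)$ rather than into an algebra with unrelated left and right eigenvalues $\alpha$ and $\gamma$, and it is exactly the relation $\gamma=1_{\F}-\alpha$ (unavailable when $n=2$, where condition~$(*)$ imposes nothing beyond~$(**)$) that guarantees this.
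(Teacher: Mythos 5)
Your proposal is correct for $n\ge3$ and has the same endgame as the paper: produce a basis in which the only nonzero structure constants at non-positive $\hat q$-degree are $\lambda_{111}=1_{\F}$, $\lambda_{1ii}=\alpha$, $\lambda_{i1i}=1_{\F}-\alpha$, and then invoke Lemma~\ref{LemmaTechnique*} with $\hat q=(0,1,\ldots,1)$. The difference lies in how the intermediate identities are extracted. The paper first normalises $[e_i,e_i]=e_i$ on an initial segment of a basis, evaluates $[e_1+\xi e_i,e_1+\xi e_i]$ for all $\xi\in\F^*$ and compares coefficients of powers of $\xi$ (using that $\F$ is infinite) to force $[e_1,e_i]+[e_i,e_1]=e_1+e_i$, then passes to the auxiliary basis $e_i'=e_1-e_i$ and uses the constancy of the $\lambda_{i1i}$ exactly as in Corollary~\ref{CorolPsatisfiesCond*}. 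You instead note that left and right multiplication by the idempotent $e_1$ induce scalar operators on $V/\Fspan(e_1)$, and you derive both the off-diagonal formula $[e_i,e_j]=\mu_je_i+\nu_ie_j$ and the balance $\alpha+\gamma=1_{\F}$ by testing condition $(*)$ against sums of three linearly independent basis vectors; I checked the coefficient bookkeeping and it closes. Your route is purely linear-algebraic and does not use the infinitude of $\F$ at this stage (it is of course still needed inside Lemma~\ref{LemmaTechnique*}), but it genuinely requires $n\ge3$, whereas the paper's $\xi$-argument also works when $n=2$ (there $\alpha+\gamma=1_{\F}$ still follows by comparing coefficients of $1,\xi$ in $[e_1+\xi e_2,e_1+\xi e_2]\in\Fspan(e_1+\xi e_2)$), which is what supports the section's closing remark about the $2$-dimensional case. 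So your parenthetical that $\gamma=1_{\F}-\alpha$ is ``unavailable when $n=2$'' describes a limitation of your method rather than of the statement; for Theorem~\ref{TheoremMainPart2}, which assumes $n\ge3$, your argument is complete.
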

\begin{proof}
Assume the hypothesis.
Since there exists $x\in V$ with $[x,x]\ne0_V$, we can consider, by scaling if necessary, a basis $(e_i)_{i=1}^n$ of $V$ such that $[e_i,e_i]=e_i$ for $1\le k\le n$ and $[e_i,e_i]=0$ for $k+1\le i\le n$ (for some $k\ge 1$).
Now let $\xi\in\F^*$ and fix $i$ with $2\le i\le k$.
Considering the product $[e_1+\xi e_i, e_1+\xi e_i]$ (which equals to $\alpha_\xi(e_1+\xi e_i)$ for some $\alpha_\xi\in\F$) and combining with the fact that $[e_1,e_i]+[e_i,e_1]=\gamma_1e_1+\gamma_ie_i$ for some (constant) $\gamma_1,\gamma_i\in\F$ we get that $\alpha_\xi=\gamma_i+\xi=1_{\F}+\xi\gamma_1$.
As a consequence we have that $(1_{\F}-\gamma_i)+(\gamma_1-1_{\F})\xi=0_{\F}$.
This is true for all $\xi\in\F^*$ from which we deduce that $\gamma_1=\gamma_i=1_{\F}$.
We conclude that $[e_1,e_i]+[e_i,e_1]=e_1+e_i$ for $2\le i\le k$.
Moreover, setting $\xi=-1_{\F}$ in the above argument we get that $[e_1-e_i,e_1-e_i]=0_{\F}$ for $2\le i\le k$.

Next we consider the basis $(e_i')_{i=1}^n$ of $V$ where $e_1'=e_1$, $e_i'=e_1-e_i$ for $2\le i\le k$ and $e_i'=e_i$ for $k+1\le i\le n$.
By similar argument as above (considering the product $[e_1'+\xi e_i', e_1'+\xi e_i']$ for $2\le i\le n$) we get that $[e_1',e_i']+[e_i',e_1']=e_i'$ for $2\le i\le n$.

Now let $\boldsymbol{\lambda}=(\lambda_{ijk})\in\F^{n^3}$ be the structure vector of $\mathfrak{g}$ relative to the basis $(e_i')$ of $V$.
In view of the above, we have that $\lambda_{111}=1_{\F}$ and $\lambda_{i1i}+\lambda_{1ii}=1_{\F}$ for $2\le i\le n$.
But the $\lambda_{i1i}$ all have the same value for $2\le i\le n$
(compare with the proof of Corollary~\ref{CorolPsatisfiesCond*} --- we use the fact that $\mathfrak{g}$ satisfies condition~$(*)$).
We can thus complete the proof by setting $\hat q=(q_i)_{i=1}^n\in\mathbb Z^n$ where $q_1=0$ and $q_i=1$ for $2\le i\le n$ in Lemma~\ref{LemmaTechnique*}.
\end{proof}

Note that the above arguments ensure that for $n=2$ the algebras $\mathfrak{d}_2$ and $\mathfrak{e}_2(\alpha)$ for $\alpha\in\F$ give a complete list of elements of $\mathop{\mathcal A_2(\F)}\setminus \mathop{\mathcal{K}_2^a(\F)}$ which have $\mathfrak{a}_2$ as their only proper degeneration.

\subsection*{Acknowledgments}
We would like to thank Professor A.E. Zalesski for useful discussions on this subject.

\end{document}